\documentclass[reqno]{amsart}
\usepackage{amssymb}
\usepackage[dvips]{epsfig}
\usepackage{graphicx}
\usepackage{color}
\usepackage{amsmath}
\usepackage{amssymb}
\usepackage{amsfonts}
\usepackage{amsthm}
\usepackage{mathrsfs}
\usepackage{tikz-cd}
\theoremstyle{definition}
\newtheorem{Def}{Definition}[section]

\theoremstyle{plain}
\newtheorem{Thm}[Def]{Theorem}
\newtheorem{Conj}[Def]{Conjecture}

\newtheorem{Lem}[Def]{Lemma}

\newtheorem{Cor}[Def]{Corollary}
\newtheorem*{Claim}{Claim}
\theoremstyle{remark}
\newtheorem{Rem}[Def]{Remark}

\usepackage{hyperref}
\hypersetup{colorlinks=true,pdfstartview=FitV,linkcolor=magenta,citecolor=cyan}
\usepackage{bm}

\begin{document}
	\title{The Euler classes of taut foliations on graph manifolds}
	\author{Yaoping Xie}
	\address{Beijing International Center for Mathematical Research,
		No. 5 Yiheyuan Road, Haidian District,
		Beijing 100871,
		China}
	\email{2301110020@pku.edu.cn}
	\date{\today}
	
	\begin{abstract}
		In this paper, we determine the set of Euler classes of all taut foliations for a family of closed graph manifolds. As a consequence, for any closed aspherical graph manifold $Y$, every integral class in the closed dual Thurston unit ball can be virtually realized as the Euler class of some taut foliation. In particular, $Y$ virtually admits a taut foliation with vanishing Euler class. We present examples of graph manifolds with arbitrarily large first Betti number, which admit no taut foliations with vanishing real Euler classes. This shows that the virtual requirement is necessary. We also provide examples of graph manifolds with arbitrarily large first Betti number, for which every taut foliation has vanishing real Euler class, but none has vanishing integral Euler class. This illustrates the subtle difference between real Euler class zero and integral Euler class zero.
	\end{abstract}
	
	\maketitle
	
	\section{Introduction}
	
	For any closed orientable irreducible $3$-manifold $Y$, the dual Thurston norm unit ball $\mathcal{B}_{\mathrm{Th}^{*}}(Y)$ is a compact polytope in $H^{2}(Y;\mathbb{R})$, which is the convex hull of finitely many vertices in the even lattice $2H^{2}(Y;\mathbb{Z})/\mathrm{tor}$. Thurston proves that the real Euler class of any taut foliation on $Y$ must be an even lattice point in $\mathcal{B}_{\mathrm{Th}^{*}}(Y)$ \cite{Th86}. It is natural to ask conversely that which even lattice points in $\mathcal{B}_{\mathrm{Th}^{*}}(Y)$ can be realized as the real Euler classes of taut foliations on $Y$.
	
	Thurston conjectures that if $Y$ is atoroidal, then every even lattice point in $\partial\mathcal{B}_{\mathrm{Th}}^{*}(Y)$ is the real Euler class of some taut foliation on $Y$ \cite[Conjecture 3]{Th86}. This is known as the \textit{Euler class one conjecture}. Using the sutured manifold hierarchy, Gabai proves the partial result that each vertex of $\partial\mathcal{B}_{\mathrm{Th}}^{*}(Y)$ is the real Euler class of some taut foliation on $Y$ (see, for example, \cite[Theorem 1.4]{GabYaz20}). In general, the Euler class one conjecture is false. Yazdi constructs explicit counter-examples among closed orientable hyperbolic $3$-manifolds of first Betti number $2$ \cite{Yazdi}. Later, Liu proves that every closed orientable hyperbolic $3$-manifold has a finite cover for which the Euler class one conjecture is false \cite{Liu2409}.
	
	After disproving the Euler class one conjecture, Yazdi poses a virtual version called the \textit{virtual Euler class one conjecture} \cite[Question 9.4]{Yazdi}.
	
	\begin{Conj}[Yazdi]
		Let $Y$ be a closed orientable hyperbolic $3$-manifold. For any rational point  $w\in\partial\mathcal{B}_{\mathrm{Th}^{*}}(Y)$, there exists a finite cover $\pi\colon\tilde{Y}\rightarrow Y$ such that $\pi^{*}w$ is the real Euler class of some taut foliation on $\tilde{Y}$. 
	\end{Conj}
	
	In \cite{Liu2411}, Liu gives a criterion based on the Alexander polynomial for a rational point $w\in\partial\mathcal{B}_{\mathrm{Th}^{*}}(Y)$ to be virtually realized as the Euler class of a taut foliation. Using the criterion, Liu constructs examples with first Betti number $2$ or $3$, supporting the virtual class-one conjecture.
	
	While the above-mentioned results all focus on classes in the boundary of $\mathcal{B}_{\mathrm{Th}^{*}}(Y)$, it is natural to broaden the perspective to classes in the interior of $\mathcal{B}_{\mathrm{Th}^{*}}(Y)$. We pose the following more general conjecture:
	
	\begin{Conj}\label{general_virtual_Euler_class_one}
		Let $Y$ be a closed orientable irreducible $3$-manifold with $b_{1}(Y)\geq 1$. For any rational class  $w\in\mathcal{B}_{\mathrm{Th}^{*}}(Y)$, there exists a finite cover $\pi\colon\tilde{Y}\rightarrow Y$ such that $\pi^{*}w$ is the real Euler class of some taut foliation on $\tilde{Y}$.
	\end{Conj}
	
	For the (virtual) realization problem, interior classes exhibit a quite different flavor from boundary classes. Indeed, if a taut foliation has any compact leaf of negative Euler characteristic, then the Euler class must lie in the boundary of $\mathcal{B}_{\mathrm{Th}^{*}}(Y)$. Thus, to construct a taut foliation whose Euler class lies in the interior of $\mathcal{B}_{\mathrm{Th}^{*}}(Y)$, one cannot follow the idea of Gabai's sutured manifold hierarchy to pick a taut surface of negative characteristic as a leaf.
	
	In this paper, we focus on the graph manifold case, which serves as the first step in the study of the (virtual) realization problem. 
    Our main result is as follows.
	
	\begin{Thm}\label{Main_Thm}
		Conjecture \ref{general_virtual_Euler_class_one} holds for every closed aspherical graph manifold.
	\end{Thm}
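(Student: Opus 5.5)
The plan is to pass to a finite cover where $Y$ has a very rigid combinatorial form, determine explicitly which Euler classes taut foliations realize on that cover, and then check that every pulled-back rational class lands in the realizable set. Throughout, "realize" means realize as the real Euler class of a taut foliation.

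\textbf{Step 1: reduce to a normal form.} Let $Y$ be a closed aspherical graph manifold. The Seifert fibered case (including $b_1\ge1$ Seifert manifolds and Sol-type degenerate cases) will be handled separately by classical results: horizontal or transverse foliations on circle bundles, plus the Eisenbud--Hirsch--Neumann and Jankins--Neumann criteria. So assume $Y$ has a nontrivial JSJ decomposition. Then $Y$ is virtually a graph manifold whose pieces $B_v=\Sigma_v\times S^1$ are products over surfaces with boundary, each of genus $\ge 1$ and many boundary components. The gluing matrices across JSJ tori can be taken to be ``flips'' up to Dehn twists; this uses Kapovich--Leeb, Przytycki--Wise, and Neumann's virtual normal forms. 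Passing to further covers keeps this form, so we may also arrange that the covers of each piece are as flexible as needed.

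\textbf{Step 2: compute the realizable classes on the normal-form cover.} Here the goal is to show that every integral class in $\mathcal{B}_{\mathrm{Th}^*}(\tilde Y)$ with the right parity (possibly after another cover) is realized. The construction proceeds piece by piece.

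\begin{itemize}
\item On each $B_v$, build foliations of two kinds: transverse to the $S^1$-fibers (horizontal, with prescribed boundary slopes via Eisenbud--Hirsch--Neumann type constructions), or containing the fibers (vertical, spiraling toward suitable boundary tori).
\item Match boundary slopes across the JSJ tori so that the resulting foliation is taut.
\item The Euler class restricted to $B_v$ is governed by the relative Euler class of $T\mathcal{F}$ against the fiber and boundary framings. For horizontal foliations this equals $\pm\chi(\Sigma_v)$ times the fiber-dual class, plus boundary contributions. For vertical pieces one gets classes in a whole interval, determined by choices of Reeb-like annuli and turbulization on the boundary tori.
\end{itemize}

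Summing these contributions through the Mayer--Vietoris sequence, the realizable set is described as a product of intervals $[-|\chi(\Sigma_v)|,|\chi(\Sigma_v)|]$ intersected with a parity lattice. The dual Thurston ball of a graph manifold is computed the same way (the Thurston norm is $\sum_v |\chi(\Sigma_v)|\,|\langle \alpha, h_v\rangle|$ on the fiber classes $h_v$), so the two sets coincide up to lattice/parity issues.

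\textbf{Step 3: handle arbitrary rational $w$ via covers.} Given rational $w\in\mathcal{B}_{\mathrm{Th}^*}(Y)$, pulling back to a degree-$d$ cover scales the Thurston norm by $d$ on lifted classes. This lets us take a cover where $\pi^*w$ is an integral point of the correct parity in the realizable set from Step 2.

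I expect the main obstacle to be Step 2, specifically the gluing step. One needs the boundary slopes on both sides of every JSJ torus to match simultaneously while keeping prescribed Euler contributions on each piece, and to interpolate continuously to interior values (including $0$) on each vertex. I would first try taking all pieces vertical with boundary behavior chosen to be Reeb-annulus-free linear foliations on the JSJ tori, adjusting Euler contributions by spinning along fibers. A cover making every vertex have even Euler characteristic and enough boundary components should give the needed freedom.
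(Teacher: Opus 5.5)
\textbf{There is a genuine gap at Step 2.} This step is the core of the theorem, and you assert it rather than construct it. The mechanisms you suggest for reaching interior classes also do not work.

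Horizontal foliations give only the vertex values $\pm\chi(\Sigma_v)$ on a piece. Reeb annuli in the foliation induced on a transverse JSJ torus cannot occur in a taut foliation: by Brittenham--Naimi--Roberts they force the induced slope to be the fiber slope of both adjacent pieces, which contradicts minimality of the JSJ decomposition. Turbulizing creates Reeb components or new compact leaves, and you give no argument for tautness or for the resulting Euler class. Taking ``all pieces vertical with linear foliations on the JSJ tori'' is inconsistent: near $T_u$, a foliation containing the fibers of both neighbouring pieces would induce two different slopes on $T_u$.

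The missing idea is to build the foliation around a vertical essential lamination. Choose on each $\Sigma_i$ an oriented essential lamination $\lambda_i$ whose complementary regions are $2(g_i-1)$ $4$-gons and $k_i$ once-holed $2$-gons. One way is to pull back the orientable stable lamination of a pseudo-Anosov map of the genus-$2$ surface lifting an Anosov map of the torus, and open one leaf at each boundary component. Up to product regions, the complementary regions of $\Lambda=\bigsqcup_i\lambda_i\times\mathbb{S}^{1}$ are then of two kinds:
\begin{itemize}
\item sutured solid tori ($4$-gon$\,\times\,\mathbb{S}^{1}$, with four longitudinal sutures);
\item sutured torus shells (two holed $2$-gons$\,\times\,\mathbb{S}^{1}$ glued along $T_u$, with sutures of the two different fiber slopes on the two sides).
\end{itemize}
Each region carries a stack-of-saddles taut foliation with a freely chosen sign. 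Computing relative Euler classes against canonical boundary sections gives $\pm PD[l_i]$ for each solid torus and $\pm PD[l_u^{o}]\pm PD[l_u^{t}]$ for each shell. The shell foliation meets $T_u$ in circles of a third slope, chosen according to the two signs. By the gluing formula, $a_i$ is a sum of $2(g_i-1)+k_i=-\chi_i$ independent signs, so it fills exactly the box $\{|a_i|\le-\chi_i,\ a_i\equiv\chi_i \bmod 2\}$. The foliation is taut because it has no compact leaves.

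\textbf{Step 3 also has a gap.} Multiplicativity of the Thurston norm under covers does not make $\pi^{*}w$ an even lattice point of the box. Write $w=\sum_i a_iPD[l_i]$ with rational $|a_i|\le-\chi_i$ (from Cigna's formula and a convex-hull argument). The coefficient of $\pi^{*}w$ on a piece $\tilde N_j$ over $N_i$ is $d_ja_i$, where $d_j$ is the \emph{horizontal} degree of $\tilde\Sigma_j\to\Sigma_i$. The total degree does not control $d_j$; for example, on $\Sigma\times\mathbb{S}^{1}$, unwrapping the fiber changes nothing. You need covers in which every horizontal degree is divisible by $2m$, where $m$ clears all denominators, and these must be compatible across the JSJ tori. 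Your claim that ``the covers of each piece are as flexible as needed'' is exactly what has to be proved. The paper obtains this from JSJ $2m$-characteristic covers (Liu): every boundary circle of $\tilde\Sigma_j$ covers its image with degree $2m$, so $2m\mid d_j$. Hence $d_ja_i$ and $\tilde\chi_j=d_j\chi_i$ are even integers with $|d_ja_i|\le-\tilde\chi_j$.

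\textbf{The remaining cases need less than you propose.} The Seifert and Sol cases need no Eisenbud--Hirsch--Neumann or Jankins--Neumann input, and horizontal foliations would again give only vertices. After a finite cover, one of two things happens:
\begin{itemize}
\item The Thurston norm vanishes (a nontrivial circle bundle over a surface of genus $\ge1$, or an Anosov torus bundle). Then the dual ball is $\{0\}$, and any taut foliation works; one exists by Gabai, or take the fibration.
\item The cover is $\Sigma\times\mathbb{S}^{1}$. This is the one-piece case of the construction above, after a horizontal cover that makes the coefficient even.
\end{itemize}
The flip normal form in Step 1 is not needed.
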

	
	Here, asphericity is just a mild condition, since a closed graph manifold is not aspherical if and only if it supports either $\mathbb{S}^{3}$-geometry or $\mathbb{S}^{2}\times\mathbb{R}$-geometry. Any manifold supporting $\mathbb{S}^{3}$-geometry is a L-space, while any manifold supporting $\mathbb{S}^{2}\times\mathbb{R}$-geometry is reducible. Both of them do not admit taut foliations, so we need to exclude them.
	
	\begin{Cor}\label{integral_virtual_realization}
		Let $Y$ be a closed aspherical graph manifold. Then for every $w\in H^{2}(Y,\mathbb{Z})$ with $\|w\|_{\mathrm{Th}^{*}}\leq 1$, there exists a finite cover $\pi\colon\tilde{Y}\rightarrow Y$ and a taut foliation $\mathcal{F}$ on $\tilde{Y}$ such that $e(\mathcal{F})=\pi^{*}w$.
	\end{Cor}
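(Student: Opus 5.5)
We deduce the corollary from Theorem \ref{Main_Thm} in two steps. First we realize the image of $w$ in real cohomology on some finite cover. Then we pass to a further finite cover on which the remaining torsion discrepancy dies.

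\textbf{Step 1: the real part.} Let $w_{\mathbb{R}}\in H^{2}(Y;\mathbb{R})$ be the image of $w$. It is a rational (indeed integral) class with $\|w_{\mathbb{R}}\|_{\mathrm{Th}^{*}}=\|w\|_{\mathrm{Th}^{*}}\leq 1$, so $w_{\mathbb{R}}\in\mathcal{B}_{\mathrm{Th}^{*}}(Y)$. Theorem \ref{Main_Thm} (that is, Conjecture \ref{general_virtual_Euler_class_one} for closed aspherical graph manifolds) gives a finite cover $\pi_{1}\colon Y_{1}\to Y$ and a taut foliation $\mathcal{F}_{1}$ on $Y_{1}$ whose real Euler class is $\pi_{1}^{*}w_{\mathbb{R}}$. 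The images of $e(\mathcal{F}_{1})$ and $\pi_{1}^{*}w$ in $H^{2}(Y_{1};\mathbb{R})$ agree. Hence
\[
t:=e(\mathcal{F}_{1})-\pi_{1}^{*}w
\]
lies in the torsion subgroup of $H^{2}(Y_{1};\mathbb{Z})$.

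\textbf{Step 2: killing the torsion.} Consider the Bockstein sequence for $0\to\mathbb{Z}\to\mathbb{Q}\to\mathbb{Q}/\mathbb{Z}\to 0$. The torsion subgroup of $H^{2}(Y_{1};\mathbb{Z})$ is exactly the image of the Bockstein $\beta\colon H^{1}(Y_{1};\mathbb{Q}/\mathbb{Z})\to H^{2}(Y_{1};\mathbb{Z})$, because a class maps to zero in $H^{2}(Y_{1};\mathbb{Q})$ if and only if it is torsion.

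So write $t=\beta(\alpha)$ with $\alpha\in H^{1}(Y_{1};\mathbb{Q}/\mathbb{Z})=\mathrm{Hom}(\pi_{1}(Y_{1}),\mathbb{Q}/\mathbb{Z})$. Since $\pi_{1}(Y_{1})$ is finitely generated, $\alpha$ has finite image. Its kernel is therefore a finite-index subgroup, which determines a finite cover $p\colon Y_{2}\to Y_{1}$ with $p^{*}\alpha=0$. By naturality of the Bockstein,
\[
p^{*}t=\beta(p^{*}\alpha)=0 .
\]

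\textbf{Step 3: conclusion.} Let $\tilde{Y}=Y_{2}$, $\pi=\pi_{1}\circ p$, and $\mathcal{F}=p^{*}\mathcal{F}_{1}$ be the pulled-back foliation. Tautness lifts to finite covers: a closed transversal through a point downstairs lifts, after taking a finite power, to a closed transversal through any chosen preimage. The tangent plane field of $\mathcal{F}$ is the pullback of that of $\mathcal{F}_{1}$, so by naturality of Euler classes
\[
e(\mathcal{F})=p^{*}e(\mathcal{F}_{1})=p^{*}\pi_{1}^{*}w+p^{*}t=\pi^{*}w .
\]

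All the real content is in Theorem \ref{Main_Thm}. The only point needing care in this deduction is Step 2, namely that every torsion class is a Bockstein image of a $\mathbb{Q}/\mathbb{Z}$-valued character and so is killed by an abelian finite cover. No residual finiteness input beyond finite generation of $\pi_{1}$ is required.
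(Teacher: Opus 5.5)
Your proof is correct and follows the paper's argument. You apply Theorem \ref{Main_Thm} to the real image of $w$, note that the discrepancy $e(\mathcal{F}_1)-\pi_1^{*}w$ is torsion, kill it in a further finite cover, and pull back the foliation; the paper cites \cite[Lemma 2.9]{LeeLip08} for the torsion-killing step, whereas you prove it directly as the Bockstein of a finite-image character $\pi_{1}(Y_{1})\to\mathbb{Q}/\mathbb{Z}$ and also make explicit that tautness lifts to finite covers.
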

	
	\begin{proof}
		By Theorem \ref{Main_Thm}, there exists a finite cover $\pi'\colon Y'\rightarrow Y$ such that $w'=(\pi')^{*}w$ is the real Euler class of a taut foliation $\mathcal{F}'$ on $Y'$. Then $w'$ differs from $e(\mathcal{F}')$ by a torsion class in $H^{2}(Y,\mathbb{Z})$. By \cite[Lemma 2.9]{LeeLip08}, there is a further finite cover $\pi''\colon\tilde{Y}\rightarrow Y'$ such that $\pi^{*}(w'-e(\mathcal{F}'))=0$. Then $(\pi'')^{*}w'=(\pi'')^{*}(e(\mathcal{F}'))=e((\pi'')^{*}\mathcal{F})$. So $\pi=\pi'\circ\pi''\colon\tilde{Y}\rightarrow Y$ is a desired cover.
	\end{proof}
	
	In particular, setting $w=0$, we obtain the following corollary:
	
	\begin{Cor}
		Let $Y$ be a closed aspherical graph manifold. Then there exists a finite cover $\tilde{Y}\rightarrow Y$ such that $\tilde{Y}$ admits a taut foliation with vanishing Euler class.
	\end{Cor}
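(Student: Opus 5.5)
This follows by specializing Corollary \ref{integral_virtual_realization} to $w=0$. The only things to check are that $0$ satisfies the hypothesis and that the conclusion is literally a vanishing integral Euler class.

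First I would check that $w=0$ is admissible. The class $0\in H^{2}(Y;\mathbb{Z})$ has $\|0\|_{\mathrm{Th}^{*}}=0\leq 1$, since the dual Thurston norm is a seminorm. Its image in $H^{2}(Y;\mathbb{R})$ is the origin, which lies in $\mathcal{B}_{\mathrm{Th}^{*}}(Y)$. I also need $\mathcal{B}_{\mathrm{Th}^{*}}(Y)$ to be a well-defined nonempty set containing $0$ even when $b_{1}(Y)=0$. In that case $H^{2}(Y;\mathbb{R})=0$ and the ball is just $\{0\}$, so the hypothesis still holds.

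Next I would apply Corollary \ref{integral_virtual_realization} with $w=0$. It gives a finite cover $\pi\colon\tilde{Y}\to Y$ and a taut foliation $\mathcal{F}$ on $\tilde{Y}$ with $e(\mathcal{F})=\pi^{*}0=0$ in $H^{2}(\tilde{Y};\mathbb{Z})$. This is exactly the claim, and it holds at the level of integral classes, torsion included.

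One subtle point is the case $b_{1}(Y)=0$. Theorem \ref{Main_Thm} invokes Conjecture \ref{general_virtual_Euler_class_one}, which assumes $b_{1}\geq 1$. So I would make sure Theorem \ref{Main_Thm}, and hence the corollary, covers rational homology sphere graph manifolds. If it does not, I would reduce to that case directly. Every closed aspherical graph manifold has a finite cover with $b_{1}\geq 1$, since aspherical graph manifolds are virtually fibered or virtually have positive first Betti number; for Seifert fibered pieces one passes to a cover that is a circle bundle over a surface of positive genus. I would then apply the corollary to that cover, using the fact that a composition of finite covers is a finite cover. I expect this bookkeeping about the $b_{1}=0$ case to be the only potential obstacle; the rest is immediate.
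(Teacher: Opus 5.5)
Your proposal is correct and is exactly the paper's argument: the corollary is obtained by setting $w=0$ in Corollary \ref{integral_virtual_realization}, which gives $e(\mathcal{F})=\pi^{*}0=0$ integrally. Your extra bookkeeping for $b_{1}(Y)=0$ is harmless but unnecessary, since Corollary \ref{integral_virtual_realization} carries no Betti number hypothesis and the proof of Theorem \ref{Main_Thm} never uses $b_{1}(Y)\geq 1$; it immediately passes to covers with the needed structure.
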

	
	In the above corollary, the virtual requirement is necessary. Otherwise, the conclusion fails, as shown in the following theorem.
	
	\begin{Thm}\label{counterexample_real_Euler_class_zero}
		There exist closed graph manifolds with arbitrarily large first Betti number, which admit no taut foliations with vanishing real Euler classes.
	\end{Thm}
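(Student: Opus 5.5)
We will build explicit examples and obstruct taut foliations with vanishing real Euler class by restricting to the JSJ pieces. The candidate family is as follows. Take a graph manifold $Y_n$ whose JSJ graph is a star. The central vertex is a Seifert piece $M_0 = F_0\times S^1$, where $F_0$ is a planar surface with $n$ boundary components. Each leaf vertex $M_i$ ($1\le i\le n$) is a Seifert fibered space over a disk with two or three exceptional fibers. Each $M_i$ is glued to $M_0$ along a torus by a gluing matrix chosen so that the fibers do not match. The $M_i$ have $b_1(M_i)=1$ and contribute little homology. The loop-free star graph alone gives nothing either. To get first Betti number growing with $n$, we instead use a central base surface of large genus, $F_0$ of genus $g$, so that $b_1(Y_n)\ge 2g$. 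A cleaner alternative is to add self-gluings: a vertex with $k$ loops gives $b_1\ge k$.

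The first main step uses known structure results for taut foliations on graph manifolds. After isotopy, a taut foliation $\mathcal F$ can be put in standard position relative to the JSJ tori (Brittenham, Roberts and others): on each Seifert piece $M_v$ it is either horizontal (transverse to the fibers) or contains vertical sublaminations. In the horizontal case on a piece $M_v$ with base orbifold $B_v$, the restriction of $e(\mathcal F)$ to $M_v$ is computed relative to the boundary slopes: evaluated on the horizontal surface class it equals $\pm\chi$ of the leaf, and it vanishes on vertical tori. Taking the Euler class as a relative class with respect to the induced boundary foliations, the pairing of $e(\mathcal F)$ with a suitable class supported in $M_v$ is $\chi^{\mathrm{orb}}(B_v)$ times a nonzero index, up to boundary corrections. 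These corrections are controlled by the slopes of the boundary foliation on each JSJ torus, and those slopes are compatible across the gluing.

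The second step is to choose a class $\sigma\in H_2(Y_n;\mathbb R)$ that detects the Euler class. We take $\sigma$ to be a surface assembled from horizontal pieces in the central vertex together with vertical annuli or tori in the leaves, chosen so that $\langle e(\mathcal F),\sigma\rangle$ is forced to be a sum of local contributions. Each local contribution has a fixed sign (all negative, since $\chi^{\mathrm{orb}}<0$ for hyperbolic bases) unless the foliation is vertical on that piece. On a vertical piece, the contribution is zero, but verticality forces the boundary slope on each adjacent torus to be the fiber slope of that piece. Through the gluing matrix, that slope is then a non-fiber slope on the neighbor. We arrange the gluing data and the Seifert invariants so that no consistent assignment of horizontal and vertical pieces gives total pairing zero. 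The horizontal-realizable slope intervals of Jankins–Neumann–Naimi type for the leaf pieces restrict which boundary slopes are allowed, and the central piece with large genus makes the Euler-characteristic contribution dominate the boundary corrections. Using Thurston's inequality $|\langle e,\sigma\rangle|\le -\chi(\sigma)$ we get $\langle e(\mathcal F),\sigma\rangle\neq 0$, so the real Euler class is nonzero.

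The main obstacle is the first step. We need a rigorous normal form for an arbitrary taut foliation on these graph manifolds, including foliations that are neither horizontal nor vertical on some piece, such as ones with Reeb-annulus-like regions near the JSJ tori. We also need a precise formula for the relative Euler class on each piece in terms of boundary slopes. I would first try to reduce to the case where $\mathcal F$ is transverse to the JSJ tori with no Reeb annuli, using tautness and incompressibility of the tori. I would then invoke the classification of foliations on Seifert pieces: each is either horizontal or contains vertical leaves forming a sublamination. For large $b_1$ I would use self-gluings on the central vertex, so that each loop adds a copy of the same local obstruction without changing the sign argument.
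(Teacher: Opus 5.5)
Your proposal has a genuine gap in the second step, which is where non-vanishing is supposed to come from. Two of its claims are false: that a piece carrying a vertical sublamination contributes zero, and that horizontal pieces contribute with a fixed sign. On a product piece $N_i=\Sigma_i\times\mathbb{S}^{1}$ the restriction of $e(\mathcal{F})$ is $a_i\,PD[l_i]$. The realization half of Theorem \ref{graph_taut_foliation_distribution} shows that every tuple $(a_i)$ with $|a_i|\le-\chi_i$ and $a_i\equiv\chi_i \pmod 2$ occurs, chosen independently on each piece. All the realizing foliations contain the vertical lamination $\lambda_i\times\mathbb{S}^{1}$, and the value of $a_i$ is set by the signs $\epsilon$ in the stacks of saddles, not by any horizontal/vertical dichotomy. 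So contributions of different pieces can cancel, and if some $\chi_i$ is even then $a_i=0$ is allowed on that piece. Reversing the orientation of $\mathcal{F}$ negates $e(\mathcal{F})$, so there is no preferred sign in any case. Likewise, containing a vertical sublamination does not force the foliation on an adjacent JSJ torus to have fiber slope: in the paper's sutured torus shells, $T_u$ is foliated by curves of a slope $\alpha$ different from both fiber slopes. Finally, Thurston's inequality $|\langle e,\sigma\rangle|\le-\chi(\sigma)$ is only an upper bound and can never give $\langle e(\mathcal{F}),\sigma\rangle\neq0$. Since you never fix a concrete gluing or a concrete class $\sigma$, nothing in the proposal excludes cancellation.

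The missing idea is parity combined with unbalanced weights. The paper glues $\Sigma'\times\mathbb{S}^{1}$ and $\Sigma''\times\mathbb{S}^{1}$, where $\Sigma',\Sigma''$ have genus $g$ and three boundary components, so $\chi=-1-2g$ is odd. The gluing maps satisfy $(\psi_i)_*[m_i']=d[m_i'']+d_i[l_i'']$ with $d>2g+1$, $\gcd(d,d_i)=1$ and $d_1+d_2+d_3=0$. The constraint half of Theorem \ref{graph_taut_foliation_distribution} uses the Brittenham--Roberts normal form, excludes Reeb annuli on the JSJ tori by \cite{BNR97}, and then applies Lemma \ref{Contrain_on_product}, whose parity part comes from Lemma \ref{Adjunction_inequality}. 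It gives $e(\mathcal{F})=a'PD[l']+a''PD[l'']$ with $a',a''$ odd, hence nonzero, and $|a'|\le 2g+1$. Mayer--Vietoris gives $\beta\in H_2(Y)$ mapping to $[m_1']+[m_2']+[m_3']$, with $PD[l'](\beta)=1$ and $PD[l''](\beta)=d$. Hence $\langle e(\mathcal{F}),\beta\rangle=a'+da''\neq0$, and also $b_1(Y)\ge 4g$. This needs no horizontal/vertical classification, no slope intervals and no exceptional fibers. The only per-piece input is the relative Euler class with canonical sections, Thurston's inequality and the parity condition.
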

	
	There are subtle differences between zero Euler class and zero real Euler class as well. Recently, Boyer, Gordon, Hu and McCoy construct infinitely many (small Seifert-fibered, hyperbolic and toroidal) rational homology $3$-spheres that admit taut foliations (which must have vanishing real Euler classes), but none with vanishing integral Euler class \cite{BGHM}. The following theorem provides examples of graph manifolds with positive first betti numbers which have similar properties.
	
	\begin{Thm}\label{counterexample_integral_Euler_class_zero}
		There exist closed graph manifolds with arbitrarily large first Betti number, for which every taut foliation has vanishing real Euler class, but none has vanishing integral Euler class.
	\end{Thm}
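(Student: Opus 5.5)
We construct graph manifolds whose Euler classes of taut foliations are forced to be real-trivial yet integrally nonzero. We take $Y$ to be obtained by gluing several Seifert fibered pieces $M_1,\dots,M_k$, each with a torus boundary component or a few, along boundary tori. We use gluing maps that send fiber to fiber on each JSJ torus, so $Y$ is in fact a circle bundle or Seifert fibered space over a surface obtained by plumbing, or nearly so. The simplest version is a Seifert fibered space $Y$ over a closed orientable surface $\Sigma_g$ with exceptional fibers, where $b_1(Y)=2g$ can be made arbitrarily large. It has a JSJ-free structure, but we can alternatively insert a separating torus to stay in the graph-manifold world as the paper intends.

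\textbf{Step 1 (reduce to horizontal foliations).} We invoke the classification of taut foliations on Seifert fibered (and, piecewise, graph) manifolds: by Brittenham and Thurston, any taut foliation can be isotoped to be horizontal or to contain vertical pieces. We choose the Seifert invariants with nonzero rational Euler number $e(Y)\neq 0$. Then the Thurston norm vanishes on the relevant classes; indeed every class in $H_2(Y;\mathbb{R})$ is represented by vertical tori, so $\mathcal{B}_{\mathrm{Th}^*}(Y)=\{0\}$. By Thurston's inequality, every taut foliation has vanishing real Euler class. This handles the first assertion for free. In the genuine graph-manifold version we arrange that each JSJ piece has base orbifold of nonnegative orbifold Euler characteristic relative to tori (e.g.\ pieces over annuli or disks with two cone points) while the global gluing produces many $H_1$ classes. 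Thurston norm zero on all of $H_2$ then again forces real Euler class $0$.

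\textbf{Step 2 (compute the integral Euler class).} For a horizontal foliation, $e(\mathcal{F})$ equals the Euler class of the tangent plane field transverse to the fibers, which is identified with the pullback of the orbifold tangent bundle class. Concretely, $e(\mathcal{F})=e(T\Sigma_{\mathrm{orb}})$ in $H^2(Y;\mathbb{Z})$, computable from the Seifert invariants $(g; b; (a_i,b_i))$ via the standard presentation of $H_1(Y)$ and Poincaré duality. For a general taut foliation we first use Step 1's isotopy to horizontal form on each piece, and then glue. Here $\mathrm{tor}\,H^2(Y;\mathbb{Z})\cong \mathrm{tor}\,H_1(Y)$ is generated by the fiber class $h$ and the exceptional-fiber classes, subject to relations. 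The plan is to pick invariants, for instance $a_i$ all even with suitable $b_i$, such that the class corresponding to $\chi_{\mathrm{orb}}\cdot h$ plus cone-point corrections is a nonzero torsion element. We also check that it is independent of the horizontal foliation up to the orientation sign $\pm$, which is fine since nonvanishing is symmetric.

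\textbf{Step 3 (existence).} We verify that $Y$ admits some taut (horizontal) foliation via the Jankins–Neumann/Naimi criterion (or Eisenbud–Hirsch–Neumann for $g\geq 1$, where horizontal foliations exist whenever $|e(Y)|\le -\chi(\Sigma_g)$ with appropriate cone terms). This is easy for $g\ge1$. The main obstacle is Step 2 combined with ensuring no non-horizontal taut foliation escapes the computation, especially in the graph-manifold setting where foliations may be vertical on some pieces and cross JSJ tori with Reeb-like annuli. I would first try the pure Seifert example with $g$ large, where Brittenham's theorem says every taut foliation is isotopic to a horizontal one when $e\ne 0$. The crux is then a clean torsion computation showing $\chi_{\mathrm{orb}}$-type class $\neq 0$ in $H_1(Y;\mathbb{Z})$, e.g.\ by reducing mod $2$ with a parity choice of the $b_i$.
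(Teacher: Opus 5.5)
Your real-vanishing argument is fine: $e(Y)\neq 0$ gives $\mathcal{B}_{\mathrm{Th}^{*}}(Y)=\{0\}$. The integral part, however, has a genuine gap. It rests on the claim that every taut foliation on a Seifert fibered $Y$ over $\Sigma_g$ with $e(Y)\neq 0$ is isotopic to a horizontal one, and this is false. Brittenham's theorem only gives the dichotomy ``horizontal, or containing a vertical sublamination'', and for $g\geq 2$ the second case really occurs, with a different Euler class. (Gabai's theorem also gives taut foliations with a nonseparating vertical torus as a leaf.)

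Concretely, take the following construction.
\begin{itemize}
\item Let $Y\to\Sigma_g$ be a circle bundle with Euler number $n\neq 0$.
\item Let $\Lambda$ be the preimage of a standard lamination of $\Sigma_g$. Its complementary regions give $2g-2$ sutured solid tori, each with four longitudinal sutures.
\item Fill the $j$-th solid torus with the foliation of Lemma~\ref{Foliation_On_sutured_solid_torus} of sign $\epsilon_j$.
\item Run the proof of Theorem~\ref{graph_taut_foliation_distribution}, using $T\mathcal{F}\cap\xi$, for a plane field $\xi$ transverse to the fibers, as the section outside the solid tori.
\end{itemize}
This gives a foliation with no compact leaves, hence taut, with $e(\mathcal{F})=(\epsilon_1+\cdots+\epsilon_{2g-2})PD[h]\in H^{2}(Y;\mathbb{Z})$. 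A balanced choice of signs gives $e(\mathcal{F})=0$ integrally. So for circle bundles your example fails outright.

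With exceptional fibers, putting each cone point in a cusped bigon produces further classes $\sum_j\epsilon_j h+\sum_i\delta_i(a_i-1)f_i$, where $f_i$ are the exceptional fibers. You would have to show that all of these, and any other non-horizontal possibility, are nonzero. That is exactly the step you defer as ``the crux'', and your horizontal computation in Step 2 says nothing about it.

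Your proposed detection method cannot work either. Since $T\mathcal{F}\oplus\underline{\mathbb{R}}\cong TY$, the mod $2$ reduction of $e(\mathcal{F})$ is $w_2(TY)=0$. Hence $e(\mathcal{F})\in 2H^{2}(Y;\mathbb{Z})$, and every homomorphism $H_1(Y)\to\mathbb{Z}/2$ kills $PD(e(\mathcal{F}))$, whatever the parities of the $b_i$.

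Your graph-manifold variant also cannot give large $b_1$. The only Seifert pieces with torus boundary and $\chi^{\mathrm{orb}}\geq 0$ are solid tori, $\mathbb{T}\times I$ and the twisted $I$-bundle over the Klein bottle.

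The paper's route avoids all three problems.
\begin{itemize}
\item It glues two products over one-holed genus-$g$ surfaces so that the fibers $l',l''$ become torsion in $H_1(Y)$: they land in $\mathrm{Im}\,i_*\cong\mathbb{Z}/(4g^2+1)$. This kills the Thurston norm with $b_1(Y)=4g$, even though each piece has $\chi<0$.
\item Theorem~\ref{graph_taut_foliation_distribution} then controls \emph{every} taut foliation. It uses Brittenham--Roberts and Brittenham--Naimi--Roberts to put the JSJ torus in good position, and Lemma~\ref{Contrain_on_product} on each piece. The result is $e(\mathcal{F})=a'PD[l']+a''PD[l'']$ with $|a'|,|a''|\leq 2g-1$, and $a',a''$ odd because $\chi=1-2g$ is odd.
\item That oddness is what rules out the sign cancellation that kills your example.
\item The image of $e(\mathcal{F})$ in the odd-order group $\mathbb{Z}/(4g^2+1)$ is an odd integer of absolute value at most $4g^2-1$, hence nonzero.
\end{itemize}
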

	
	\subsection*{Outline of the proof}
	Let $Y$ be a closed graph manifold such that each JSJ piece is a product of a circle and a positive genus surface.
	The crux is to determine the set of Euler classes of all taut foliations on such manifolds. This would be done in Theorem \ref{graph_taut_foliation_distribution}.
	Then Theorem \ref{Main_Thm}, Theorem \ref{counterexample_real_Euler_class_zero} and Theorem \ref{counterexample_integral_Euler_class_zero} can be derived from Theorem \ref{graph_taut_foliation_distribution}, together with some knowledge of Thurston norms and virtual reductions of graph manifolds.
	
	The idea of constructing taut foliations on $Y$ is to pick an oriented essential lamination $\Lambda$ on $Y$ and foliate each complementary region to produce a foliation on the whole $Y$.
	Concretely, let $N_{i}=\Sigma_{i}\times\mathbb{S}^{1}\ (1\leq i\leq n)$ be the JSJ pieces of $Y$. Pick an oriented $1$-dimensional essential lamination $\lambda_{i}$ on each $\Sigma_{i}$ such that the complementary region of $\lambda_{i}$ consists of $4$-gons and holed $2$-gons. Then we obtain an oriented $2$-dimensional essential lamination $\Lambda=\sqcup_{i=1}^{n}\lambda_{i}\times\mathbb{S}^{1}$ on $Y$. Let $C$ be the complementary region of $\Lambda$. The essential part of $C$ is a union of taut sutured solid tori and taut torus shells. Each component of $C$ can be tautly foliated by stacks of saddles in various ways, depending on how the leaves wind around the tangential boundaries. The taut foliation on $C$ together with $\Lambda$ form a taut foliation on $Y$. Some calculations show that any class of the form $\sum_{i=1}^{n}a_{i}\cdot PD[l_{i}]$ with $|a_{i}|\leq-\chi_{i},\ a_{i}\equiv\chi_{i}\mod 2$ can be realized as the Euler class of some taut foliation obtained in this way, where $\chi_{i}=\chi(\Sigma_{i})$ and $l_{i}$ is an oriented $\mathbb{S}^{1}$-fiber of $N_{i}$. Meanwhile, results from \cite{BNR97} and \cite{BR99} show that the Euler class of any taut foliation on $Y$ should have this form. Therefore, the set of Euler classes of all taut foliations on $Y$ exactly consists of classes of such forms.
	
	\subsection*{Organization}
	In Section \ref{Pre}, we present definitions and basic properties of the related objects: Thurston norm, graph manifolds, relative Euler class, foliations, and sutured manifolds. In Section \ref{Standard_lamination_on_surface}, we assign essential laminations to all compact surfaces, which would be used to construct essential laminations on graph manifolds. In Section \ref{SST}, we construct taut foliations on sutured solid tori and sutured torus shells, and calculate their relative Euler classes. In Section \ref{non-geometric_case}, we prove the main theorems.
	
	\subsection*{Convention}
	In this paper, $\mathbb{T}$ stands for an oriented torus. $\mathbb{S}^{1}$ stands for an oriented circle.
	Denote $\mathbb{H}^{n}=\{(x_{1},\ldots,x_{n})\in\mathbb{R}^{n}:x_{n}\geq0\}$.
	All vector bundles $E\rightarrow X$ are supposed to be equipped with metrics.
	All 2-dimensional foliations $\mathcal{F}$ on oriented $3$-manifolds are supposed to be oriented and $C^{\infty,0}$ (see subsection \ref{foliation} for the definition of $C^{\infty,0}$).
	
	\subsection*{Acknowledgement}
	The author thanks his advisor Yi Liu for proposing the problem and providing suggestions throughout the research, and thanks Qingfeng Lyu, Jianru Duan and Xingpie Liu for valuable conversations.
	
	\section{Preliminaries}\label{Pre}
	
	\subsection{The Thurston norm}
	
	For any compact orientable $3$-manifold $Y$ with toroidal boundary, the Thurston norm is a seminorm on $H_{2}(Y,\partial Y;\mathbb{R})$ introduced by Thurston \cite{Th86}. For any connected compact oriented surface $S$, the \textit{complexity of $S$}, denoted by $x(S)$, is defined to be the nonnegative integer $\max\{-\chi(S),0\}$. The complexity of a compact oriented surface refers to the sum of the complexity of all connected components of $S$.
	For $\alpha\in H_{2}(Y,\partial Y;\mathbb{Z})$, assign $\|\alpha\|_{\mathrm{Th}}$ to be the infimum of $x(S)$ among all embedded compact oriented surfaces $S$ representing $\alpha$. Thurston shows that $\|\cdot\|_{\mathrm{Th}}$ extends uniquely to a seminorm, still denoted as $\|\cdot\|_{\mathrm{Th}}$, on $H_{2}(Y,\partial Y;\mathbb{R})$. Let $\|\cdot\|_{\mathrm{Th}^{*}}$ be the dual (partial) norm on $H^{2}(Y,\partial Y;\mathbb{R})$ defined by
	$ \|w\|_{\mathrm{Th}^{*}}=\sup\{w(\alpha)\colon\|\alpha\|_{\mathrm{Th}}=1\} $. $\|\cdot\|_{\mathrm{Th}}$ and $\|\cdot\|_{\mathrm{Th}^{*}}$ are called the \textit{Thurston norm} and the \textit{dual Thurston norm} of $Y$ respectively.
	Denote the closed unit balls of $\|\cdot\|_{\mathrm{Th}}$ and $\|\cdot\|_{\mathrm{Th}^{*}}$ by $\mathcal{B}_{\mathrm{Th}}(Y)$ and $\mathcal{B}_{\mathrm{Th}^{*}}(Y)$, respectively.
	
	\subsection{Graph manifolds}
	
	\subsubsection{JSJ decomposition}
	
	Let $Y$ be a closed irreducible $3$-manifold. There exists a (possibly empty) minimal union $\mathcal{T}$ of disjoint essential tori, unique up to isotopy, such that each component of $Y\backslash\mathcal{T}$ is either Seifert-fibered, a torus (semi-)bundle or atoroidal. The canonical decomposition of $Y$ given by $\mathcal{T}$ is called the \textit{JSJ decomposition} \cite{JS79}, \cite{Joh79}. $Y$ is called \textit{geometric} if and only if $\mathcal{T}$ is empty. We refer to the components of $\mathcal{T}$ as the \textit{JSJ tori} of $Y$, and refer to the components of $Y\backslash\mathcal{T}$ as the \textit{JSJ pieces} of $Y$.
	
	The geometrization of $3$-manifolds implies that each JSJ piece supports exactly one of the following eight geometries: $\mathbb{S}^{3}$, $\mathbb{S}^{2}\times\mathbb{R}$, $\mathbb{R}^{3}$, $\mathrm{Nil}$, $\mathbb{H}^{2}\times\mathbb{R}$, $\widetilde{\mathrm{SL}_{2}}$, $\mathrm{Sol}$, $\mathbb{H}^{3}$. The first six geometries correspond to Seifert-fibered manifolds, the seventh geometry corresponds to torus (semi-)bundles of Anosov type and, the last geometry is known as the hyperbolic geometry. See \cite[\S 11.5, \S 12]{Mar} for details on JSJ decomposition and geometrization.
	
	We call $Y$ a \textit{graph manifold}, if the JSJ decomposition of $Y$ contains no hyperbolic pieces. In other words, either $Y$ supports $\mathrm{Sol}$-geometry or each JSJ piece of $Y$ is Seifert-fibered. Be aware that we include Seifert-fibered manifolds and torus (semi)-bundles into the definition of graph manifolds, while some literature excludes them.
	
	We need the following virtual reduction lemma.
	
	\begin{Lem}[{\cite[Lemma 2.1]{Liu}}]\label{first_virtual_reduction}
		Let $Y$ be a graph manifold which is not geometric. Then there exists a finite cover $\tilde{Y}\rightarrow Y$ such that each JSJ piece of $\tilde{Y}$ is a product of $\mathbb{S}^{1}$ and a compact surface of positive genus.
	\end{Lem}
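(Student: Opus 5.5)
Since the lemma is quoted from Liu, I will reconstruct the standard argument in two stages: first kill the exceptional fibers and the non-orientability of base orbifolds, then kill the nontrivial circle bundle structure. Because $Y$ is a non-geometric graph manifold, it is not a Sol manifold. So $\mathcal{T}\neq\emptyset$ and every JSJ piece $N_v$ is Seifert-fibered with hyperbolic base orbifold $\mathcal{O}_v$; pieces with Euclidean base would be $\mathbb{S}^1\times\mathbb{S}^1\times I$ or the twisted $I$-bundle over the Klein bottle, which the JSJ minimality excludes. The plan is to construct a finite cover $\tilde Y\to Y$ whose restriction to each piece is a cover of Seifert-fibered spaces, with the new base orbifolds being orientable surfaces of positive genus without cone points, and with trivial fibrations.

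\textbf{Step 1 (bases).} Each $\mathcal{O}_v$ is a hyperbolic $2$-orbifold with nonempty boundary, so $\pi_1^{\mathrm{orb}}(\mathcal{O}_v)$ is virtually free and residually finite. I would choose a finite-index torsion-free orientable normal subgroup with two properties: it meets each boundary subgroup in a subgroup of a prescribed index $m$, and the corresponding surface has positive genus. Using free products of cyclic groups, I expect to arrange the same index $m$ at all boundary components, with $m$ a common multiple chosen for all pieces. I would also pass to fiber degree $m'$ by pulling back along the map multiplying fibers by $m'$; this also orients the fibers. The crucial compatibility requirement is on the boundary tori. Each cover restricted to a boundary torus should be the characteristic cover $T\to T$ corresponding to the subgroup $k\,\pi_1(T)$ for a single integer $k$. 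Characteristic covers of a torus are unique, so covers of adjacent pieces glue along JSJ tori regardless of the gluing maps. This is the standard trick of Luecke--Wu / Kapovich--Leeb: take $k$ divisible enough, and realize the boundary restriction as multiplication by $k$ in both the fiber and the base-boundary directions. Then I assemble the covers along the graph, taking enough copies so the degrees match; this is a routine graph-of-spaces counting.

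\textbf{Step 2 (trivializing the bundles).} After Step 1, each piece is a circle bundle over an orientable surface $\Sigma_v$ with nonempty boundary. Such a bundle is automatically a product $\Sigma_v\times\mathbb{S}^1$, because the Euler obstruction lives in $H^2(\Sigma_v,\mathbb{Z})=0$. Positive genus comes from Step 1, or can be ensured afterward by a further characteristic boundary-compatible cover of the bases, which again glues.

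\textbf{Main obstacle.} I expect the main obstacle to be Step 1: producing, for every piece simultaneously, covers whose restrictions to every boundary torus are exactly the characteristic $k$-covers. The base direction needs orbifold covers in which every boundary curve unwraps exactly $k$ times. The fiber direction needs a uniform degree $k$, and the orbifold Euler number is irrelevant here since the boundary is nonempty. I would first handle the base by mapping $\pi_1^{\mathrm{orb}}(\mathcal{O}_v)$ onto a finite group in which each boundary generator has order exactly $k$ and each cone generator survives. This is possible by a residual finiteness argument, for instance via a homomorphism to $\mathbb{Z}/k\times$ a finite group detecting the torsion.
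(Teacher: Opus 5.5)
The paper does not prove this lemma; it quotes it from Liu, so your outline has to stand on its own. Your architecture is the standard one: build covers of the pieces whose restriction to every elevation of a JSJ torus $T$ is the characteristic cover $k\pi_1(T)$, then glue copies. But the fiber-direction step has a genuine gap, and it sits exactly where you assert that the Euler number is irrelevant.

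\textbf{The fiber-direction step.} After your base step, an elevation $N'=\Sigma'\times\mathbb{S}^1$ of $N_v$ meets $T$ in $\langle c^k,h\rangle$, where $c$ is any curve on $T$ dual to the fiber $h$. To reach $k\pi_1(T)=\langle c^k,h^k\rangle$ by ``multiplying fibers by $k$'' over the same base, you need a homomorphism $\phi\colon\pi_1(N')\to\mathbb{Z}/k$ with $\phi(h)=1$ that kills each lifted framing curve $\gamma_j$ representing $c^k$. In fact, since $h$ is central, every cover of $N'$ of base degree one with these boundary restrictions is of this form. Write $\gamma_j=\partial_j h^{n_j}$ in a product structure. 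Since $\sum_j[\partial_j]=0$ in $H_1(\Sigma')$, such a $\phi$ exists iff $\sum_j n_j\equiv 0\pmod k$.

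Now $\sum_j n_j=\pm e(N';\{\gamma_j\})=\pm d\,e(N_v;\{c\})$, where $d$ is the base degree. Modulo $k$ this does not depend on the choice of $c$, because $k\mid d$. It can be nonzero. Take $\mathcal{O}_v$ an annulus with one cone point of type $(5,\beta)$, and the map $\pi_1^{\mathrm{orb}}(\mathcal{O}_v)\to\mathbb{Z}/5$ sending the cone generator and one boundary generator to $1$. Then:
\begin{itemize}
\item the kernel is torsion-free;
\item both boundary generators have order exactly $5$;
\item $\Sigma'$ has genus $2$.
\end{itemize}
So this cover satisfies all your stated requirements. Yet $e(N_v;\{c\})=e_0+\beta/5$ with $e_0\in\mathbb{Z}$, so $e(N';\{\gamma_j\})=5e_0+\beta\not\equiv 0\pmod 5$. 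Hence no fiber-degree-$5$ cover of $N'$ over the same base is characteristic on both boundary tori.

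In general, integrality of the relative Euler number of the final cover forces $k$ to divide $(\text{base degree})\cdot e(N';\{\gamma_j\})$. The missing step is to control this quantity. One way: first pass to a cover of $\Sigma'$ of degree divisible by $k$ in which every boundary circle lifts homeomorphically. Such a cover comes from a map $H_1(\Sigma')\to\mathbb{Z}/k$ killing $\partial\Sigma'$, which exists since $g(\Sigma')\geq 1$. This is precisely the relative Euler number issue treated by Luecke and Wu.

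\textbf{Two smaller points.}

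First, JSJ minimality does not exclude twisted $I$-bundles over the Klein bottle. Glue $K\tilde{\times}I$ to $F\times\mathbb{S}^1$, with $F$ a one-holed torus, so that the fiber of $F\times\mathbb{S}^1$ is neither of the two Seifert fiber slopes of $K\tilde{\times}I$. The gluing torus is then a JSJ torus, and that piece has Euclidean base. Such pieces need separate treatment, for instance by taking $k$ even. Write $\pi_1(K)=\langle a,b\mid bab^{-1}=a^{-1}\rangle$ and $\pi_1(T)=\langle a,b^2\rangle$. Any $g=a^xb^y$ with $y$ odd has $g^2=b^{2y}\notin k\pi_1(T)$. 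So if $H\leq\pi_1(K)$ satisfies $H\cap\pi_1(T)=k\pi_1(T)$, then $H\leq\pi_1(T)$. Thus every elevation is a $T^2\times I$ collar, which disappears from the JSJ decomposition of $\tilde{Y}$.

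Second, the conclusion concerns the JSJ pieces of $\tilde{Y}$. You should therefore justify that the JSJ tori of $\tilde{Y}$ are, up to such collars, the elevations of those of $Y$. That is, you need that fibers of adjacent elevated pieces remain non-isotopic.
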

	
	\subsubsection{JSJ characteristic covers}
	Let $Y$ be a closed orientable irreducible $3$-manifold. For a positive integer $m$, we say a finite cover $\tilde{Y}\rightarrow Y$ is \textit{JSJ $m$-characteristic}, if for any JSJ torus $T$ of $Y$, every elevation $\tilde{T}$ of $T$ is a $m$-characteristic cover of $T$, which means that every slope of $\tilde{T}$ covers a slope of $T$ with degree $m$.
	We require the following result about existence of JSJ characteristic covers.
	
	\begin{Lem}[{\cite[Proposition 4.2]{Liu}}]\label{characteristic_cover}
		Let $Y$ be a closed orientable irreducible $3$-manifold. Then there is a positive integer $m_{0}$ satisfying that for any positive integral multiple $m$ of $m_{0}$, there is a JSJ $m$-characteristic cover of $Y$. 
	\end{Lem}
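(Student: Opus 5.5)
The plan is to build the cover piece by piece over the JSJ decomposition and then glue. The feature that makes gluing possible is that the subgroup $m\mathbb{Z}^{2}\subset\pi_{1}(T)\cong\mathbb{Z}^{2}$ is characteristic. It is preserved by every automorphism of $\mathbb{Z}^{2}$, in particular by the gluing maps between adjacent JSJ pieces. So if every piece has a finite cover in which every elevation of every boundary torus is the $m$-characteristic cover of that torus, the elevations on the two sides of a JSJ torus are the same covering of $T$. They can then be matched by homeomorphisms commuting with the projections. Geometric $Y$, where $\mathcal{T}=\emptyset$, is vacuous: take the trivial cover.

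\emph{Seifert-fibered pieces.} Let $N$ be a Seifert piece with base orbifold $B$, which has nonempty boundary. First pass to a cover $F\times\mathbb{S}^{1}\to N$ induced by a finite manifold cover $F\to B$; this is possible since $B$ is a good orbifold with boundary. The boundary tori of $F\times\mathbb{S}^{1}$ cover those of $N$ with some bounded degrees, and I put these, together with the orders of the exceptional fibers, into $m_{0}$. Next take a homomorphism $\pi_{1}(F\times\mathbb{S}^{1})\to\mathbb{Z}/m\times\mathbb{Z}/m$ with these properties: each boundary curve of $F$ maps to a generator of the first factor, and the fiber maps to a generator of the second factor. Since $\pi_{1}(F)$ is free, the only constraint is that the images of the boundary curves sum to zero. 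This can be arranged, possibly after first passing to a double cover of $F$ so that it has positive genus and enough boundary components, and absorbing that factor into $m_{0}$. The kernel then meets each boundary $\mathbb{Z}^{2}$ exactly in $m\mathbb{Z}^{2}$, relative to the already fixed cover. Composing, we need the total degree on each boundary torus to be exactly $m$-characteristic over $N$. I would handle this by taking the first cover to be $m_{0}$-characteristic on boundary tori and then applying the $\mathbb{Z}/(m/m_{0})$-type construction.

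\emph{Hyperbolic pieces.} For a hyperbolic piece I would invoke separability of peripheral subgroups, using Hempel's residual finiteness together with the Dehn filling argument. This gives a finite cover in which every elevation of every cusp torus is $m$-characteristic for all $m$ divisible by some $m_{1}$. Concretely, one takes a homomorphism to a finite group whose restriction to each peripheral subgroup has kernel $m_{1}\mathbb{Z}^{2}$, and then composes with the homology-type map to $(\mathbb{Z}/m)^{b}$ to pass to multiples. Making every boundary torus of the same piece simultaneously characteristic of the same degree is the step I expect to be the main obstacle. This is exactly where the integer $m_{0}$, the least common multiple over all pieces, is forced.

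\emph{Gluing.} Suppose each piece $N_{v}$ has such a cover $\tilde{N}_{v}$ of degree $d_{v}$. Then each boundary torus $T$ of $N_{v}$ has exactly $d_{v}/m^{2}$ elevations in $\tilde{N}_{v}$. Let $D=\operatorname{lcm}_{v}d_{v}$ and take $D/d_{v}$ disjoint copies of $\tilde{N}_{v}$. Then every side of every JSJ torus has exactly $D/m^{2}$ elevations, all of them isomorphic to the $m$-characteristic cover of $T$. Pair them off arbitrarily and glue via lifts of the gluing maps; this is possible by characteristicity. The result is a possibly disconnected degree-$D$ cover of $Y$. Any component of it is a connected JSJ $m$-characteristic cover, since JSJ tori of $Y$ lift to JSJ tori of the cover.
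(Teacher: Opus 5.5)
The paper gives no proof of this lemma; it only cites Liu. Your gluing step is sound and is the standard reduction. Once every piece $N_v$ has a finite cover $\tilde N_v$ in which every elevation of every boundary torus is the $m$-characteristic cover, you take $D/d_v$ copies of each $\tilde N_v$. You then match the $D/m^{2}$ elevations on the two sides of each JSJ torus by lifts of the gluing maps, and this gives a JSJ $m$-characteristic cover.

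The gap is that the piecewise existence is the whole content of the lemma, and you do not establish it for either kind of piece. It must hold for \emph{every} multiple of one fixed $m_0$ and for \emph{all} elevations at once.

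\textbf{Hyperbolic pieces.}
Separability of $m_1P$ only gives a finite-index $H$ with $H\cap P=m_1P$ at the basepoint elevation. The other elevations correspond to $H\cap gPg^{-1}$ and are uncontrolled. Passing to the normal core can make $\operatorname{core}(H)\cap P$ strictly smaller than $m_1P$. So separability does not by itself give a quotient in which every peripheral $\mathbb{Z}^{2}$ maps onto $(\mathbb{Z}/m_1)^{2}$.

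Your passage to multiples through abelian quotients also fails in general. Suppose $H_1(\tilde T;\mathbb{Q})\to H_1(\tilde N;\mathbb{Q})$ has rank one, as happens for a one-cusped piece by half-lives-half-dies. Then some primitive $\ell\in\pi_1(\tilde T)$ has $\ell^{r}$ in the kernel of every abelian quotient. Hence no abelian cover of $\tilde N$ is $k$-characteristic on $\tilde T$ unless $k\mid r$.

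A genuinely stronger input is needed, together with bookkeeping over all conjugates of the peripheral subgroups. Examples are congruence quotients of $\mathrm{PSL}_2$ over a finitely generated ring, where cusp groups have elementary abelian image, or virtual retractions onto peripheral subgroups coming from virtual specialness. You name this as the main obstacle but leave it open.

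\textbf{Seifert pieces.}
Your second step, the quotient $\pi_1(F)\times\mathbb{Z}\to\mathbb{Z}/k\times\mathbb{Z}/k$, correctly multiplies an existing characteristic degree by $k$. But the base cover $F\times\mathbb{S}^{1}\to N$, already $m_0$-characteristic on all boundary tori, is simply assumed.

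The cover induced from an orbifold cover $F\to B$ has fiber degree $1$. Its boundary elevations are therefore $\langle\tilde c^{\,e},h\rangle$, which is characteristic only when $e=1$. Adding these degrees to $m_0$ changes nothing.

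A repair needs two things:
\begin{enumerate}
\item an orbifold cover in which all boundary components have a common degree $e$;
\item a fiberwise unwrapping of degree $e$ compatible with the twisting $c'=\tilde c^{\,e}h^{s}$, where $c'$ is a boundary curve of $F$ and $\tilde c$ is the chosen boundary curve of $N$.
\end{enumerate}
With an abelian quotient, the second point forces $\sum s\equiv 0\pmod e$, summed over all components of $\partial F$. Up to sign, $\sum s$ equals $\deg(F\to B)$ times the relative Euler number of $N$, so it depends on the Seifert invariants. It must be arranged by the choice of cover, or circumvented with a nonabelian quotient.

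None of this is addressed. The proposal therefore proves the correct gluing reduction, not the lemma itself.
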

	
	\subsubsection{Thurston norms of graph manifolds}
	
	Let $Y$ be a closed graph manifold. Let $N_{1},\dots,N_{n}$ be the JSJ pieces of $Y$. For each $i$, let $\chi_{i}=\chi^{orb}(N_{i})$ and $l_{i}$ be an oriented regular fiber of $N_{i}$. The Thurston norm of $Y$ can be explicitly described as follows.
	
	\begin{Lem}[{\cite[Lemma 3.1]{Cigna}}]\label{Thuston_norm_graph_manifold}
		Suppose that $\chi_{i}<0$ for each $i$. For every $\alpha\in H_{2}(Y;\mathbb{R})$, the following formula holds:
		\[ \|\alpha\|_{\mathrm{Th}}=-\sum\limits_{i=1}^{n}\chi_{i}| PD[l_{i}](\alpha)|. \]
	\end{Lem}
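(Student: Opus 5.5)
We prove the formula by establishing matching lower and upper bounds for integral classes $\alpha\in H_{2}(Y;\mathbb{Z})$. The formula then extends to all of $H_{2}(Y;\mathbb{R})$ by homogeneity and continuity, since both sides are seminorms that scale linearly under multiplication by positive integers.

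\textbf{Lower bound.} Let $S$ be an embedded surface representing $\alpha$ with $x(S)=\|\alpha\|_{\mathrm{Th}}$, taken without sphere or disk components and incompressible. Since $Y$ is irreducible and the JSJ tori $\mathcal{T}$ are incompressible, a standard innermost-disk argument isotopes $S$ so that $S\cap\mathcal{T}$ consists of curves essential in both $S$ and $\mathcal{T}$. Then every component of $S\cap N_{i}$ has nonpositive Euler characteristic, so $x(S)=\sum_{i}x(S\cap N_{i})$. Each $S_{i}=S\cap N_{i}$ is an incompressible, boundary-incompressible surface in the Seifert-fibered piece $N_{i}$. By the classical classification it can be isotoped to be either vertical (a union of fibers) or horizontal (transverse to all fibers). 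A vertical component meets $l_{i}$ algebraically zero times. A horizontal component is a branched cover of the base orbifold of degree $d=|l_{i}\cdot S_{i}|$, so $-\chi(S_{i})=-d\,\chi_{i}$. Hence $x(S_{i})\geq-\chi_{i}\,|PD[l_{i}](\alpha)|$, using that $PD[l_{i}](\alpha)=l_{i}\cdot S$ is computed inside $N_{i}$. Summing over $i$ gives the inequality $\geq$.

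\textbf{Upper bound.} Set $d_{i}=PD[l_{i}](\alpha)$. We realize $\alpha$, after replacing it by a positive multiple $k\alpha$ if needed, by a surface that is horizontal in $N_{i}$ of degree $|kd_{i}|$ and vertical elsewhere. First restrict $\alpha$ to $H_{2}(N_{i},\partial N_{i})$. The boundary slopes on each JSJ torus are determined by $\alpha$, and compatibility across tori holds because $\alpha$ is a closed class. In $N_{i}$, a relative class with prescribed boundary and fiber intersection number $d_{i}\neq0$ is represented, after passing to a suitable multiple, by a horizontal surface of Euler characteristic $d_{i}\chi_{i}$, plus vertical tori and annuli, which contribute $0$. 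When $d_{i}=0$ we use a vertical surface, which has complexity $0$.

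Gluing these pieces along the JSJ tori, after matching the numbers of boundary curves, gives a closed surface representing $k\alpha$. Its complexity is $\sum_{i}-\chi_{i}\,|kd_{i}|$. Dividing by $k$ gives the inequality $\leq$.

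\textbf{Main obstacle.} The difficult step is the gluing in the upper bound. One has to ensure that the boundary curves of the pieces on the two sides of each torus agree in number as well as in slope, possibly by taking parallel copies, without creating inessential components that spoil the Euler characteristic count. One also has to handle the degenerate case where the two fiber slopes on a torus interact with vertical pieces. I expect to resolve this by taking a common multiple $k$ of the relevant degrees. I would also check that the orientations of the horizontal pieces are consistent with the sign of $d_{i}$.
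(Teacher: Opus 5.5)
The paper does not prove this lemma; it is quoted from \cite{Cigna}, so your argument has to stand on its own. Your lower bound is the standard argument and is fine. Two small points: isotope $S$ to minimize $|S\cap\mathcal{T}|$, which excludes $\partial$-parallel annuli and so makes the pieces $S\cap N_i$ $\partial$-incompressible; and do the degree count for each horizontal component separately before applying the triangle inequality.

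\textbf{The gap is in the upper bound,} exactly at the gluing step you single out, and the remedy you propose does not address it. Passing to $k\alpha$ and taking parallel copies multiplies the boundary data on both sides of a JSJ torus $T$ by the same factor, so it can never repair a mismatch in the number of curves. Such a mismatch arises only when boundary curves on $T$ are not coherently oriented, for instance the two ends of a vertical annulus lying on $T$. Your ``degenerate case'' shows this sharply. If both adjacent pieces carry vertical surfaces, the boundary class on $T$ is $0$. Any boundary curves then have slope $[l_i]$ on one side and $[l_j]\neq\pm[l_i]$ on the other, and no multiple makes them match.

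What you need is to choose the pieces so that on every JSJ torus their boundary consists of coherently oriented parallel curves. Put $\xi=PD(\alpha)\in H^{1}(Y;\mathbb{Z})$. Then on either side the number of curves equals the divisibility of $\xi|_{T}$ and the slopes agree, so the matching is automatic. This is easy to arrange:
\begin{itemize}
\item If $\xi(l_i)\neq0$, average a closed $1$-form representing $\xi|_{N_i}$ over the circle action. The result $\omega$ is closed and invariant, so $\omega(V)$ is constant for the generating field $V$ (Cartan's formula). It is nonzero because its integral over a regular fibre is $\xi(l_i)$. Hence $\omega$ integrates to a fibration $N_i\to\mathbb{S}^{1}$. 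Its fibre is horizontal of degree $|\xi(l_i)|$ with Euler characteristic $|\xi(l_i)|\chi_i$, and it restricts to a fibration of each boundary torus. No multiple is needed.
\item If $\xi(l_i)=0$, then $\xi|_{N_i}$ is pulled back from the base. Take the preimage of a regular value of a map from the base to $\mathbb{S}^{1}$ that is a covering map or constant on each boundary circle.
\end{itemize}
Alternatively, tube adjacent, oppositely oriented boundary curves together along annuli parallel into $T$.

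\textbf{The glued surface need not represent $k\alpha$.} Even after the boundaries match, the pieces only see the restrictions $\alpha|_{N_i}\in H_{2}(N_i,\partial N_i)$. The kernel of $H_{2}(Y)\to\bigoplus_i H_{2}(N_i,\partial N_i)$ is isomorphic to the first cohomology of the JSJ graph and is spanned by the classes $[T_u]$ of the JSJ tori. It is nonzero as soon as the graph has a cycle. So in general the glued surface represents $k\alpha+\sum_u c_u[T_u]$ for some integers $c_u$.

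The repair is short. One option is the triangle inequality together with $\|[T_u]\|_{\mathrm{Th}}=0$. The other is to obtain the surface as the preimage of a regular value of a map $Y\to\mathbb{S}^{1}$ in the class $\xi$ that restricts to the maps above on the pieces. The interpolation in collars of the $T_u$, with extra full turns of $\mathbb{S}^{1}$ to fix the class, contributes only annuli and tori.

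With both points in place, the surface has no sphere or disk components and complexity $-\sum_i\chi_i|PD[l_i](\alpha)|$, which gives the upper bound.
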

	
	The following Lemma characterizes the rational points in the dual Thurston norm unit ball of $Y$.
	
	\begin{Lem}\label{Char_rational_point}
		$\mathcal{B}_{\mathrm{Th}^{*}}(Y)$ is the convex hull of $A=\left\{\sum\limits_{i=1}^{n}\pm\chi_{i}PD[l_{i}]\right\}$. Every rational point $w$ in $\mathcal{B}_{\mathrm{Th}^{*}}(Y)$ can be expressed as $w=\sum\limits_{i=1}^{n}a_{i}PD[l_{i}]$ where $a_{i}\in\mathbb{Q},\ |a_{i}|\leq-\chi_{i}$.
	\end{Lem}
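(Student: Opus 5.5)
We will derive the lemma by linear duality from the formula in Lemma \ref{Thuston_norm_graph_manifold}. Write $\phi_i=PD[l_i]\in H^2(Y;\mathbb{R})$. Lemma \ref{Thuston_norm_graph_manifold} says
\[ \|\alpha\|_{\mathrm{Th}}=\sum_{i=1}^{n}(-\chi_i)\,|\phi_i(\alpha)| . \]
So $\|\cdot\|_{\mathrm{Th}}$ is the pullback of the weighted $\ell^1$-norm $\|x\|=\sum_i(-\chi_i)|x_i|$ on $\mathbb{R}^n$ under the linear map
\[ \Phi\colon H_2(Y;\mathbb{R})\to\mathbb{R}^n,\qquad \alpha\mapsto(\phi_1(\alpha),\dots,\phi_n(\alpha)). \]
The dual of a seminorm pulled back along a linear map is governed by the transpose map $\Phi^*\colon(\mathbb{R}^n)^*\to H^2(Y;\mathbb{R})$, which sends $e_i^*\mapsto\phi_i$.

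The first step is the inclusion $\mathrm{conv}(A)\subseteq\mathcal{B}_{\mathrm{Th}^*}(Y)$. For $w=\sum_i\epsilon_i(-\chi_i)\phi_i$ with $\epsilon_i=\pm1$ we have
\[ w(\alpha)=\sum_i\epsilon_i(-\chi_i)\phi_i(\alpha)\le\sum_i(-\chi_i)|\phi_i(\alpha)|=\|\alpha\|_{\mathrm{Th}} , \]
so $\|w\|_{\mathrm{Th}^*}\le 1$. Since the unit ball is convex, it contains the convex hull of $A$.

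The second step, the reverse inclusion, is the main point. Suppose $w\in\mathcal{B}_{\mathrm{Th}^*}(Y)$ but $w\notin\mathrm{conv}(A)$. Then $\mathrm{conv}(A)$ is a compact convex polytope, and by Hahn–Banach (hyperplane separation in the finite-dimensional space $H^2(Y;\mathbb{R})$) there is $\alpha\in H_2(Y;\mathbb{R})=H^2(Y;\mathbb{R})^*$ with
\[ w(\alpha)>\max_{a\in A}a(\alpha). \]
Choosing $\epsilon_i=\mathrm{sign}\,\phi_i(\alpha)$ gives $\max_{a\in A}a(\alpha)=\sum_i(-\chi_i)|\phi_i(\alpha)|=\|\alpha\|_{\mathrm{Th}}$. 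Hence $w(\alpha)>\|\alpha\|_{\mathrm{Th}}$. We now split into two cases:
\begin{itemize}
\item If $\|\alpha\|_{\mathrm{Th}}>0$, rescaling $\alpha$ to norm $1$ contradicts $\|w\|_{\mathrm{Th}^*}\le1$.
\item If $\|\alpha\|_{\mathrm{Th}}=0$, then every $\phi_i(\alpha)=0$ and $w(\alpha)>0$. Scaling $\alpha$ then shows the dual norm is infinite, unless $w$ already lies in the span of the $\phi_i$, which forces $w(\alpha)=0$.
\end{itemize}
So we must check that the dual (partial) norm is finite exactly on $\mathrm{span}\{\phi_i\}$, and read $\mathcal{B}_{\mathrm{Th}^*}$ with that convention. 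This degenerate-direction check is the only subtle step. In the case of interest the $\phi_i$ span $H^2$, because the Thurston norm is a genuine norm there, so the issue disappears.

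The third step is the rational statement. Every $w$ in the hull can be written as $w=\sum_i a_i\phi_i$ with $|a_i|\le-\chi_i$. Indeed, a convex combination $\sum_{\epsilon}t_\epsilon\sum_i\epsilon_i(-\chi_i)\phi_i$ regroups with coefficient $a_i=(-\chi_i)\sum_\epsilon t_\epsilon\epsilon_i$, which lies in $[\chi_i,-\chi_i]$. To make the $a_i$ rational when $w$ is rational, we argue as follows:
\begin{itemize}
\item The set $\{a\in\mathbb{R}^n : \Phi^*(a)=w,\ |a_i|\le-\chi_i\}$ is a nonempty polytope.
\item It is cut out by linear equations and inequalities with rational coefficients: the $\phi_i$ are integral classes, $w$ is rational, and the $\chi_i$ are rational.
\item A nonempty rational polytope has rational vertices, so we may choose $a\in\mathbb{Q}^n$.
\end{itemize}
This completes the plan.
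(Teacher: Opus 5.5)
Your proposal is correct and follows essentially the same route as the paper: $\mathrm{conv}(A)\subseteq\mathcal{B}_{\mathrm{Th}^*}(Y)$ from the norm formula, the reverse inclusion by hyperplane separation, and rationality via a rational-polytope argument (the paper instead writes a rational point as a rational convex combination of the vertices, which lie in $A$); your explicit treatment of the case $\|\alpha\|_{\mathrm{Th}}=0$ is more careful than the paper's one-line inequality chain. One correction: your remark that the $\phi_i$ span $H^2(Y;\mathbb{R})$ ``in the case of interest'' is false in general (the paper's own examples have $n=2$ pieces but $b_1(Y)\geq 4g$, so the Thurston norm has a large kernel), but nothing depends on it, since your partial-norm case analysis already handles that case.
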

	
	\begin{proof}
		Denote by $\mathcal{C}(A)$ the convex hull of $A$. By Lemma \ref{Thuston_norm_graph_manifold}, we have
        \[
            \|\alpha\|_{\mathrm{Th}}=\max\limits_{w\in \mathcal{C}(A)}\{w(\alpha)\},\ \forall\alpha\in H_{2}(Y;\mathbb{R}).
        \]
        For any $w\in\mathcal{C}(A)$, we have $|w(\alpha)|\leq\|\alpha\|_{\mathrm{Th}},\ \forall\alpha\in H_{2}(Y;\mathbb{R})$. So $\|w\|_{\mathrm{Th}^{*}}\leq 1$. We obtain $\mathcal{C}(A)\subset\mathcal{B}_{\mathrm{Th}^{*}}(Y)$. Suppose that there is some point $w_{0}\in\mathcal{B}_{\mathrm{Th}^{*}}(Y)\backslash\mathcal{C}(A)$. By the hyperplane separation theorem, there is a $\alpha\in H_{2}(Y;\mathbb{R})\cong H^{2}(Y;\mathbb{R})^{*}$ such that $\alpha(w_{0})>\max\limits_{w\in \mathcal{C}(A)}\{w(\alpha)\}$. Then we have
        \[
            \|\alpha\|_{\mathrm{Th}}=\max\limits_{w\in \mathcal{C}(A)}\{w(\alpha)\}<w_{0}(\alpha)\leq \|w_{0}\|_{\mathrm{Th}^{*}}\|\alpha\|_{\mathrm{Th}}\leq\|\alpha\|_{\mathrm{Th}},
        \]
        which leads to a contradiction. So $\mathcal{B}_{\mathrm{Th}^{*}}(Y)=\mathcal{C}(A)$. In particular, every vertex of $\mathcal{B}_{\mathrm{Th}^{*}}(Y)$ is in $A$. Every rational point in $\mathcal{B}_{\mathrm{Th}^{*}}(Y)$ is a rational convex combination of vertices of $\mathcal{B}_{\mathrm{Th}^{*}}(Y)$, so it can be expressed as $\sum\limits_{i=1}^{n}a_{i}PD[l_{i}]$, where $a_{i}\in\mathbb{Q},\ |a_{i}|\leq-\chi_{i}$.
	\end{proof}
	
	\subsection{The relative Euler class}
	Let $(X,Y)$ be a CW pair. Then there is a natural isomorphism $H^{*}(X/Y)\cong H^{*}(X,Y)$. Suppose that $E$ is an oriented circle bundle over $X$ with $E|_{Y}$ trivial. Let $s$ be a section of $E|_{Y}$. Then $s$ gives a trivialization $\Phi_{s}\colon E|_{Y}\rightarrow Y\times\mathbb{S}^{1}$. Let $p\colon Y\times\mathbb{S}^{1}\rightarrow\mathbb{S}^{1}$ be the projection. Consider the quotient circle bundle $E_{s}$ over $X/Y$ given by $E_{s}=E/\sim$, where $v_{1}\sim v_{2}$ if and only if $v_{1},v_{2}\in E|_{Y}$ and $p(\Phi(v_{1}))=p(\Phi(v_{2}))$. Then the \textit{relative Euler class} $e(E,s)\in H^{2}(X,Y)$ is defined as the class $e(E_{s})\in H^{2}(X/Y)\cong H^{2}(X,Y)$.
    Suppose that $V$ is a oriented $2$-plane bundle over $X$ with $V|_{Y}$ trivial. Let $s$ be a unit section of $V|_{Y}$ ($s_{y}$ has unit length for each $y\in Y$). Then we define the \textit{relative Euler class} $e(V,s)$ as $e(E,s)$, where $E$ is the associated unit circle bundle of $V$.
	We have the following \textit{gluing formula}.
	
	\begin{Lem}[Gluing formula]\label{Gluing_formula}
		Let $(X,Y)$ be a CW pair.
		Let $X_{1},\ldots,X_{n}$ be subcomplexes of $X$ such that $X=X_{1}\cup\cdots\cup X_{n}$ and $X_{i}\cap X_{j}\subset Y,\ \forall 1\leq i<j\leq n$.
		In other words, $X/Y=\bigvee_{i=1}^{n}X_{i}/(X_{i}\cap Y)$.
		So we have a natural isomorphism
		\[H^{2}(X,Y)\cong H^{2}(X/Y)\cong\bigoplus_{i=1}^{n}H^2(X_{i}/(X_{i}\cap Y))\cong\bigoplus_{i=1}^{n}H^{2}(X_{i},X_{i}\cap Y).\]
		Let $\iota_{i}\colon H^{2}(X_{i},X_{i}\cap Y)\hookrightarrow H^{2}(X,Y)$ be the natural inclusion.
		Let $E$ be an oriented circle bundle over $X$ with $E|_{Y}$ trivial.
		Let $s$ be a section of $E|_{Y}$. Then we have
		\[e(E,s)=\sum\limits_{i=1}^{n}\iota_{i}(e(E|_{X_{i}},s|_{X_{i}\cap Y})).\]
	\end{Lem}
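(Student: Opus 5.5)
The plan is to reduce to the absolute case on the quotient space $X/Y$ and use the fact that Euler classes of circle bundles behave well under pullback and under wedge decompositions. By definition, $e(E,s)=e(E_{s})\in H^{2}(X/Y)$, where $E_{s}$ is the quotient circle bundle over $X/Y$ determined by the trivialization $\Phi_{s}$. Write $q\colon X/Y\to\bigvee_{i}X_{i}/(X_{i}\cap Y)$ for the canonical identification; it is a homeomorphism because $X_{i}\cap X_{j}\subset Y$. For each $i$, let $j_{i}\colon X_{i}/(X_{i}\cap Y)\hookrightarrow X/Y$ be the inclusion of the $i$-th wedge summand, and let $r_{i}\colon X/Y\to X_{i}/(X_{i}\cap Y)$ be the retraction collapsing the other summands to the basepoint. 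On $H^{2}$, the isomorphism $H^{2}(X/Y)\cong\bigoplus_{i}H^{2}(X_{i}/(X_{i}\cap Y))$ is given by $c\mapsto(j_{i}^{*}c)_{i}$, and its inverse is $(c_{i})_{i}\mapsto\sum_{i}r_{i}^{*}c_{i}$. The latter holds because $H^{2}$ of a wedge of CW complexes is the direct sum, and we use reduced cohomology, which coincides with unreduced cohomology in degree $2$. Under this identification, $\iota_{i}$ corresponds to $r_{i}^{*}$ composed with the isomorphism $H^{2}(X_{i},X_{i}\cap Y)\cong H^{2}(X_{i}/(X_{i}\cap Y))$.

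The key step is to check that $j_{i}^{*}E_{s}\cong(E|_{X_{i}})_{s|_{X_{i}\cap Y}}$ as oriented circle bundles over $X_{i}/(X_{i}\cap Y)$. This should be immediate from the construction. The restriction of $E$ to $X_{i}$, with the trivialization over $X_{i}\cap Y$ obtained by restricting $\Phi_{s}$, is a sub-bundle of $E$. The equivalence relation defining $(E|_{X_{i}})_{s|}$ is the restriction of the one defining $E_{s}$, since both identify points of $E$ over $Y$ (respectively $X_{i}\cap Y$) with equal $\mathbb{S}^{1}$-coordinate $p\circ\Phi_{s}$. The induced map of quotients is therefore a bundle isomorphism onto $E_{s}|_{j_{i}(X_{i}/(X_{i}\cap Y))}$. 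Naturality of the Euler class then gives $j_{i}^{*}e(E_{s})=e\bigl((E|_{X_{i}})_{s|}\bigr)=e(E|_{X_{i}},s|_{X_{i}\cap Y})$.

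Combining these, $e(E,s)=\sum_{i}r_{i}^{*}j_{i}^{*}e(E_{s})=\sum_{i}\iota_{i}\bigl(e(E|_{X_{i}},s|_{X_{i}\cap Y})\bigr)$. The only point needing care, and the main obstacle, is that the quotients are CW complexes and that the topology of $E_{s}$ makes it a genuine locally trivial circle bundle near the collapsed point, so that Euler classes and naturality apply. I would handle this by noting that $E_{s}$ is trivial over a neighborhood of the basepoint: take a collar neighborhood of $Y$ in $X$, available because $(X,Y)$ is a CW pair, and extend the trivialization $\Phi_{s}$ over it. The same collar, intersected with $X_{i}$, works for each summand. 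No further computation is required.
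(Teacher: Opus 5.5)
Your proposal is correct and is essentially the paper's argument. The paper writes $\mathrm{id}=\sum_i\iota_i\pi_i$, with $\pi_i$ the projection onto $H^{2}(X_{i},X_{i}\cap Y)$, and invokes naturality of the Euler class to get $\pi_i(e(E,s))=e(E|_{X_i},s|_{X_i\cap Y})$; this is exactly your identity $\sum_i r_i^*j_i^*=\mathrm{id}$ combined with $j_i^*E_s\cong(E|_{X_i})_{s|_{X_i\cap Y}}$, which you make explicit where the paper compresses it into the word ``naturality''.
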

	
	\begin{proof}
		Let $\pi_{i}\colon H^{2}(X,Y)\twoheadrightarrow H^{2}(X_{i},X_{i}\cap Y) $ be the natural projection.
		We have $\sum\limits_{i=1}^{n}\iota_{i}\pi_{i}=\mathrm{id}$.
		By naturality of the Euler class, $e(E|_{X_{i}},s|_{X_{i}\cap Y})=\pi_{i}(e(E,s))$.
		So,
		\[\sum\limits_{i=1}^{n}\iota_{i}(e(E|_{X_{i}},s|_{X_{i}\cap Y}))=\sum\limits_{i=1}^{n}\iota_{i}(\pi_{i}(e(E,s)))=e(E,s).\]
	\end{proof}
	
	$e(E,s)$ can also be defined from the perspective of obstruction theory.
	Denote the $n$-skeleton of $X$ by $X^{n}$.
	Let $s_{1}$ be an arbitrary extension of $s$ over $X^{1}\cup Y$.
	Such an extension must exist since the fiber $\mathbb{S}^{1}$ is connected.
	The first obstruction occurs when we would like to extend $s_{1}$ over $X^{2}\cup Y$.
	Given a $2$-cell $\sigma\colon e^{2}\rightarrow X$, identify $\sigma^{*}E$ with $e^{2}\times\mathbb{S}^{1}$ by an arbitrary trivialization of $\sigma^{*}E$.
	Then $\sigma^{*}s_{1}|_{\partial e^{2}}$ gives a map $\partial e^{2}\rightarrow\mathbb{S}^{1}$. The assignment of the degree of the map $\partial e^{2}\rightarrow\mathbb{S}^{1}$ to $e^{2}$ for each $2$-cell gives a relative $2$-cochain $c(E,s_{1})\in C^{2}(X,Y)$.
	The relative cochain is actually a relative cocycle, so it descends to a class $[c(E,s_{1})]\in H^{2}(X,Y)$.
	According to the obstruction-theoretic definition of Euler classes, this is exactly the relative Euler class $e(E,s)$.
	In particular, it is independent of the choice of $s_{1}$.
	For more details on this definition, see \cite[\S 4]{Fol2}.
	From this definition, it follows immediately that if $s$ can be extended to a global section, then $e(E,s)=0$, and it is straightforward to obtain the following \textit{restriction formula}.
	
	\begin{Lem}[Restriction formula]\label{Restriction_formula}
		Let $(X,Y)$ be a CW pair, and $Z$ be a subcomplex of $Y$.
		Let $E$ be an oriented circle bundle over $X$ with $E|_{Y}$ trivial.
		Let $s$ be a section of $E|_{Y}$.
		Then we have
		\[e(E,s|_{Z})=\iota(e(E,s)),\]
		where $\iota\colon H^{2}(X,Y)\rightarrow H^{2}(X,Z)$ is the natural homomorphism.
	\end{Lem}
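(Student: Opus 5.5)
This follows from the obstruction-theoretic description of the relative Euler class given just above, applied to one extension chosen to work for both pairs $(X,Y)$ and $(X,Z)$.

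First, pick an arbitrary extension $s_{1}$ of $s$ over $X^{1}\cup Y$. Then $s_{1}|_{X^{1}\cup Z}$ is an extension of $s|_{Z}$ over $X^{1}\cup Z$, because $Z\subset Y$. Since $e(E,s|_{Z})$ does not depend on which extension is used, it can be computed from this restricted one. So $e(E,s|_{Z})=[c(E,s_{1}|_{X^{1}\cup Z})]\in H^{2}(X,Z)$. The only subtlety is that $s_{1}$ is defined on all of $Y$, not just on $Z$, and this causes no trouble.

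Next, compare the two cochains $c(E,s_{1})\in C^{2}(X,Y)$ and $c(E,s_{1}|_{X^{1}\cup Z})\in C^{2}(X,Z)$. Here $C^{2}(X,Y)$ is the set of cochains vanishing on the $2$-cells of $Y$, and it sits inside $C^{2}(X,Z)$. The map $\iota$ on cohomology is induced by this inclusion of cochain complexes.

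\begin{itemize}
\item On a $2$-cell $\sigma$ not contained in $Y$, the boundary $\partial\sigma$ lies in $X^{1}$. Both cochains therefore record the degree of the same map $\sigma^{*}s_{1}|_{\partial e^{2}}\colon\partial e^{2}\to\mathbb{S}^{1}$, so they agree.
\item On a $2$-cell $\sigma$ of $Y$, the relative cochain $c(E,s_{1})$ is zero by definition. Since $s_{1}$ is defined on all of $\sigma$, the map $\sigma^{*}s_{1}|_{\partial e^{2}}$ extends over $e^{2}$ and has degree $0$. So $c(E,s_{1}|_{X^{1}\cup Z})$ also vanishes there. This holds whether or not $\sigma$ lies in $Z$: on cells of $Z$ the cochain is zero by convention, and on cells of $Y\setminus Z$ it is zero by the degree computation.
\end{itemize}

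Hence $c(E,s_{1}|_{X^{1}\cup Z})$ is exactly the image of $c(E,s_{1})$ under the inclusion $C^{2}(X,Y)\hookrightarrow C^{2}(X,Z)$. Passing to cohomology gives $e(E,s|_{Z})=\iota(e(E,s))$.

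The main point to check is the second case: cells of $Y\setminus Z$ contribute degree zero precisely because $s_{1}$ is a genuine section over all of $Y$. Everything else is bookkeeping.

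Alternatively, one can argue via the quotient-bundle definition. The quotient map $X/Z\to X/Y$ realizes $\iota$, and $E_{s|_{Z}}$ is the pullback of $E_{s}$ along this map, up to homotopy of the trivializations. Naturality of the Euler class then gives the formula. I would use the cochain argument, since it avoids checking that pullback identification.
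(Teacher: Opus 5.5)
Your proof is correct and is essentially the paper's argument. The paper simply notes that the single cochain $c(E,s_{1})\in C^{2}(X,Y)$ represents both $e(E,s)$ and $e(E,s|_{Z})$. Your check that $2$-cells of $Y\setminus Z$ contribute degree zero (because $s_{1}$ extends over them) makes explicit why this cochain is also the $(X,Z)$-obstruction cochain for $s_{1}|_{X^{1}\cup Z}$.
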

	
	\begin{proof}
		Note that the relative $2$-chain $c(E,s_{1})\in C^{2}(X,Y)$ represents both $e(E,s)\in H^{2}(X,Y)$ and $e(E,s|_{Z})\in H^{2}(X,Z)$.
	\end{proof}
	
	The relative Euler class $e(E,s)$ only depends on the homotopy class of $s$. Let $T(E|_{Y})$ be the set of homotopy classes of sections of $E|_{Y}$. Then $T(E|_{Y})$ is an affine space modeled on $[Y,\mathbb{S}^{1}]\cong H^{1}(Y)$. Denote the homotopy class of $s$ by $[s]_{E}\in T(E|_{Y})$.
	The difference between the relative Euler classes for different sections is given by the following \textit{difference formula}.
    
    \begin{Lem}[Difference formula]\label{Difference_formula}
    	Let $s,s'$ be two sections of $E|_{Y}$.
    	Let $d\colon H^{1}(Y)\rightarrow H^{2}(X,Y)$ be the coboundary map in the cohomology exact sequence of the pair $(X,Y)$.
    	Then
    	\[e(E,s)-e(E,s')=d([s]_{E}-[s']_{E}).\]
    \end{Lem}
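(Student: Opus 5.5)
We prove the difference formula with the obstruction-theoretic description of the relative Euler class, comparing cocycles built from the two sections. Recall the convention: $[s]_{E}-[s']_{E}\in H^{1}(Y)\cong[Y,\mathbb{S}^{1}]$ is the class of the map $\delta\colon Y\to\mathbb{S}^{1}$ obtained by measuring, fiberwise, the angle from $s'$ to $s$ in the oriented circle bundle $E|_{Y}$. Concretely, fix a trivialization $E|_{Y}\cong Y\times\mathbb{S}^{1}$. Then $s,s'$ become maps $Y\to\mathbb{S}^{1}$ and $\delta=s\cdot (s')^{-1}$. This difference is independent of the trivialization, since the group $\mathbb{S}^{1}$ is abelian.

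First, extend $s'$ to a section $s'_{1}$ over $X^{1}\cup Y$. Extend $\delta$ to a map $\Delta\colon X^{1}\cup Y\to\mathbb{S}^{1}$; this is possible on each 1-cell because $\mathbb{S}^{1}$ is connected. Define $s_{1}=\Delta\cdot s'_{1}$, rotating fiberwise by the angle $\Delta$ using the orientation of $E$. Then $s_{1}$ restricts to $s$ on $Y$, so it is a valid extension for computing $e(E,s)$. For each $2$-cell $\sigma$, after trivializing $\sigma^{*}E$, the boundary map of $s_{1}$ is the pointwise product of those of $\Delta$ and $s'_{1}$. Degrees add under pointwise multiplication in $\mathbb{S}^{1}$, so
\[
c(E,s_{1})(\sigma)-c(E,s'_{1})(\sigma)=\deg(\Delta\circ\sigma|_{\partial e^{2}}).
\]

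The main step is identifying the cochain $\sigma\mapsto\deg(\Delta\circ\sigma|_{\partial e^{2}})$ with a representative of $d[\delta]$. Let $u\in C^{1}(X^{1}\cup Y)$ be the cellular cochain given by pulling back the fundamental class of $\mathbb{S}^{1}$ via $\Delta$. It is the winding number of $\Delta$ along each $1$-cell, made well defined by choosing real lifts of $\Delta$ on each cell. Its restriction to $Y$ is a cocycle representing $[\delta]$. The coboundary $\delta u$, evaluated on a $2$-cell $\sigma$, equals $\deg(\Delta\circ\partial\sigma)$, and it vanishes on cells of $Y$. By the definition of the connecting homomorphism $d$ (extend a cocycle on $Y$ to a cochain on $X$, then take its coboundary), the relative cocycle $\delta u\in C^{2}(X,Y)$ represents $d[\delta]$.

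Combining this with the previous step gives $e(E,s)-e(E,s')=d([s]_{E}-[s']_{E})$. The main care needed is the sign convention for the orientation of the fibre rotation and of $d$; choosing the convention for $[s]_{E}-[s']_{E}$ as above makes the signs consistent. The result also follows from the independence of $e(E,s)$ from the choice of extension, which was stated earlier.
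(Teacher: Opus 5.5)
Your argument is correct and is essentially the paper's proof: both compare the obstruction cochains of two extensions over $X^{1}\cup Y$, and both recognize their difference as the coboundary of a $1$-cochain on $X$ that extends a cocycle representative of $[s]_{E}-[s']_{E}$. By definition of the connecting map, that coboundary represents $d([s]_{E}-[s']_{E})$.

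The paper is a special case of your construction. It first homotopes $s,s'$ to agree on $Y^{0}$ and then takes $s_{1},s_{1}'$ to agree on $X^{1}\setminus Y^{1}$, which amounts to $\Delta\equiv 1$ off $Y$. This also makes your winding-number cochain integral automatically, rather than depending on how the real lifts are normalized at the vertices.
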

    
    \begin{proof}
    	We use the argument in the proof of \cite[Lemma 3.12]{Juh10}. Denote the $n$-skeleton of $Y$ by $Y^{n}$.
    	We can homotope $s,s'$ such that they coincide on $Y^{0}$.
    	Then the cohomology class $[s]_{E}-[s']_{E}$ is represented by the $1$-cocycle $o(s,s')$, whose value on a $1$-cell $e^{1}$ of $Y$ is the homotopy class of $s$ in the trivialization of $E|_{e^{1}}$ given by $s'$.
    	
    	Choose extensions $s_{1},s_{1}'$ of $s,s'$ over $X^{1}\cup Y$ such that they coincide on $X^{1}\backslash Y^{1}$.
    	Given a $2$-cell $\sigma\colon e^{2}\rightarrow X$, $\langle c(E,s_{1})-c(E,s_{2}),e^{2}\rangle$ is the difference between the homotopy classes of the sections $\sigma^{*}s_{1}|_{\partial e^{2}}$ and $\sigma^{*}s_{1}'|_{\partial e^{2}}$ in a trivialization of $\sigma^{*}E|_{e^{2}}$.
    	But $s_{1},s_{1}'$ agree on $X^{1}\backslash Y^{1}$.
    	So we have
    	\[\langle c(E,s_{1})-c(E,s_{2}),e^{2}\rangle=\langle o(s,s'),\partial e^{2}\cap\sigma^{*}Y\rangle.\]
    	This implies that
    	\[e(E,s)-e(E,s')=[c(E,s_{1})]-[c(E,s_{2})]=d([o(s,s')])=d([s]_{E}-[s']_{E}).\]
    \end{proof}
	
	\subsection{Foliations}\label{foliation}
	
	In this subsection, we recall the definition of foliations, paying attention to the regular assumptions. For general facts on foliations, we refer to the textbooks \cite{Fol} and \cite{Fol2}.
	
	\begin{Def}
	    Let $Y$ be a compact oriented $3$-manifold with possibly empty boundary. A \textit{foliated chart} on $Y$ is a pair $(U,\varphi)$ satisfying that:
		
		$\bullet$ $U$ is an open subset in $Y$ such that $\partial U=U\cap\partial Y$;
		
		$\bullet$ $\phi:U\rightarrow B_{2}\times B_{1}$ is a homeomorphism, where $B_{2}$  is $\mathbb{R}^{2}$ or $\mathbb{H}^{2}$, $B_{1}$ is $\mathbb{R}^{1}$ or $\mathbb{H}^{1}$.
		
		For each $y\in B_{1}$, $F_{y}=\varphi^{-1}(F_{y}\times\{y\})$ is called a \textit{plaque} of $U$; $U_{\pitchfork}=\varphi^{-1}(\partial B_{2}\times B_{1})$ is called the \textit{transverse boundary} of $U$; $U_{\tau}=\varphi^{-1}(B_{2}\times\partial B_{1})$ is called the \textit{tangential boundary} of $U$.
	\end{Def}
	
	\begin{Def}
		Let $\mathcal{F}=\{L_{\lambda}\}_{\lambda\in\Lambda}$ be a decomposition of $Y$ into injectively immersed connected surfaces. Suppose that $M$ admits a collection of foliated charts $\{(U_{\alpha},\varphi_{\alpha})\}_{\alpha\in I}$ such that for each $\lambda\in\Lambda$ and $\alpha\in I$, $L_{\lambda}\cap  U_{\alpha}$ is a union of plaques of $U_{\alpha}$. Then $\mathcal{F}$ is called a \textit{topological foliation}. Each $L_{\lambda}$ is called a \textit{leaf} of $\mathcal{F}$. The \textit{transverse boundary} of $Y$ is given by $\partial_{\pitchfork}Y=\cup_{\alpha\in I}\partial_{\pitchfork}U_{\alpha}$, and the \textit{tangential boundary} of $Y$ is given by $\partial_{\tau}Y=\cup_{\alpha\in I}\partial_{\tau}U_{\alpha}$.
	    $\mathcal{F}$ is called \textit{oriented}, if there is a compatible choice of orientations on the leaves.
	\end{Def}
	
	\begin{Def}
		A foliation $\mathcal{F}$ on $Y$ is said to be $C^{\infty,0}$, if each leaf of $\mathcal{F}$ is smooth, and $T\mathcal{F}=\sqcup_{p\in Y}T_{p}L_{p}$ is a $C^{0}$-plane field of $Y$, where $L_{p}$ is the leaf of $\mathcal{F}$ that contains $p$. $T\mathcal{F}$ is called the \textit{tangent bundle} of $\mathcal{F}$.
 	\end{Def}
	
	\begin{Rem}
		The definition of $C^{\infty,0}$ varies in the literature. Based on the results in \cite[\S 5.1]{Fol}, Our definition of a $C^{\infty,0}$ foliation is equivalent to the one in \cite{KR15}, and is slightly stronger than the definition of a $C^{0,\infty+}$-foliation in \cite{BJ16}.
	\end{Rem}
	
	Throughout this paper, all foliations $\mathcal{F}$ are supposed to be oriented and $C^{\infty,0}$. Under the assumption, the \textit{Euler class} of $\mathcal{F}$, denoted by $e(\mathcal{F})$, is defined as the Euler class of the oriented vector bundle $T\mathcal{F}$.

    \begin{Rem}
        The Euler class can be actually defined for an oriented topological foliation $\mathcal{F}$ as follows: Let $L_{p}$ be the leaf of $\mathcal{F}$ that contains $p$. Consider the oriented microbundle
        \[
            Y\xrightarrow{s}E(\mathcal{F})=\sqcup_{p\in Y}L_{p}\xrightarrow{\pi}Y,
        \]
        where $\pi(p,q)=p,\ s(p)=(p,p)$. Let $E^{0}(\mathcal{F})=E(\mathcal{F})\backslash s(Y)$. Then there is a unique class $u\in H^{2}(E(\mathcal{F}),E^{0}(\mathcal{F}))$, called the \textit{Thom class}, such that the restriction to $H^{2}(E(\mathcal{F})|_{p},E^{0}(\mathcal{F})|_{p})\cong\mathbb{Z}$ maps $u$ to the positive generator for each $p\in Y$. Then the \textit{(topological) Euler class} $e_{\mathrm{top}}(\mathcal{F})\in H^{2}(Y)$ is defined as the pullback of $u$ via the map $s\colon (X,\emptyset)\rightarrow (E(\mathcal{F}),E^{0}(\mathcal{F}))$. It is clear that $e_{\mathrm{top}}(\mathcal{F})$ is exactly the Euler class of $T\mathcal{F}$ when $\mathcal{F}$ is $C^{\infty,0}$. It turns out that the Euler class of an oriented foliation is a topological invariant, i.e. it is invariant under the topological isotopies of foliations. By \cite{Calegari01}, any topological foliation is isotopic to a $C^{\infty,0}$ one. So the $C^{\infty,0}$ assumption is inessential, and we impose it only to avoid the language of microbundles and to make the computation more tractable. 
    \end{Rem}
    
	Let $\mathcal{F}$ be a foliation on $Y$. $\mathcal{F}$ is called \textit{Reebless}, if it has no Reeb components and half-Reeb components. For the definition of Reeb components and half-Reeb component, see \cite[Example 1.1.12]{Fol} and \cite[p. 452]{BNR97}, respectively.
	$\mathcal{F}$ is called \textit{taut}, if each leaf meets either a closed transversal or a transverse arc from one component of $\partial_{\tau} Y$ to another.
	A taut foliation is always Reebless, and
    it is well-known that a foliation is taut if it has no annulus leaves or torus leaves \cite[\S 6.3]{Fol}.
	
	\subsection{Sutured manifolds}
	
	Sutured manifolds are originally introduced by Gabai to construct taut foliations on $3$-manifolds \cite{Gab83}. The following discussion is primarily taken from \cite{Yazdi}, and the reader is referred to \cite[\S 2, \S 3]{Yazdi} for a detailed treatment.
	
	\begin{Def}
		A \textit{sutured manifold} $(M,\gamma)$ is a compact oriented $3$-manifold $M$ together with a set $\gamma\subset\partial M$ of pairwise disjoint annuli $A(\gamma)$ and tori $T(\gamma)$. Furthermore, the interior of each component of $A(\gamma)$ contains a \textit{suture}, i.e. a homologically non-trivial oriented simple closed curve. Denote the union of sutures by $s(\gamma)$. Finally, every component of $R(\gamma)=\partial M\backslash\mathrm{int}(\gamma)$ is oriented such that if $\delta$ is a component of $\partial R(\gamma)$ with boundary orientation, then $\delta$ represents the same homology class as some suture in $H_{1}(\gamma)$. Define $R_{+}(\gamma)$ (resp. $R_{-}(\gamma)$) to be the union of the components of $\partial M\backslash\mathrm{int}(\gamma)$ whose normal vectors point out of (resp. into) $M$.
	\end{Def}
	
	\begin{Def}
		An oriented foliation $\mathcal{F}$ on $(M,\gamma)$ is \textit{taut} if the following conditions hold:
		\begin{itemize}
			\item[(1)]
			$R(\gamma)$ is a union of oriented compact leaves of $\mathcal{F}$; each component of $\partial A(\gamma)$ is a convex corner.
			\item[(2)]
			$\mathcal{F}$ is transverse to $\gamma$ and the induced foliation of $\mathcal{F}$ on $\gamma$ contains no Reeb annuli.
			\item[(3)]
			Every leaf of $\mathcal{F}$ has either a closed transversal or a transverse arc starting from $R_{-}(\gamma)$ and ending on $R_{+}(\gamma)$.
		\end{itemize}
		We say that $\mathcal{F}$ is \textit{compatible with the sutured structure} if it satisfies conditions (1) and (2) above.
	\end{Def}
	
	\begin{Rem}
		Condition (2) is equivalent to saying that $\mathcal{F}|_{\gamma}$ is transverse to a fibration of $\gamma$ over $\mathbb{S}^{1}$ \cite[Remark on p. 367]{Fol2}. 
	\end{Rem}
	
	\subsubsection{The relative Euler class using the canonical section}

    Let $\mathcal{F}$ be a foliation on a sutured manifold $(M,\gamma)$ compatible with the sutured structure. In general, the relative Euler class of $\mathcal{F}$ depends on the choice of unit sections of $T\mathcal{F}|_{\partial M}$. However, if $(M,\gamma)$ has toroidal boundary, there is a canonical choice.

    \begin{Def}
        A \textit{sutured manifold with toroidal boundary} is a sutured manifold $(M,\gamma)$ such that $\partial M$ consists of tori, and each component of $s(\gamma)$  is an essential simple closed curve in $\partial M$.
    \end{Def}
	
	Before the description of canonical sections, We state the fact that the tangent bundle of a $2$-dimensional torus has a canonical unit section up to homotopy \cite[p. 338]{Yazdi}. We refer to such a canonical section as a \textit{Lie group section}.
	
	\begin{Def}[{\cite[Definition 3.9]{Yazdi}}]
		Let $(M,\gamma)$ be a sutured manifold with toroidal boundary. Let $\mathcal{F}$ be a foliation on $(M,\gamma)$ compatible with the sutured structure. A unit section $s$ of $T\mathcal{F}|_{\partial M}$ is called a \textit{canonical section}, if it satisfies the following:
		\begin{itemize}
			\item[(1)]
			$s|_{\gamma}$ is normal to $\partial M$.
			\item[(2)]
			$s$ is a Lie group section along each torus component of $R(\gamma)$.
			\item[(3)] For each annulus component $A$ of $R(\gamma)$, split $A$ as a product $\mathbb{S}^{1}\times I$. Then $s$ is transverse to the $\mathbb{S}^{1}$-fibers.
		\end{itemize}
	\end{Def}
	
	\begin{Rem}
		The canonical section is well-defined up to homotopy.
	\end{Rem}
	
	\begin{Def}
		Let $(M,\gamma)$ be a sutured manifold with toroidal boundary. Let $\mathcal{F}$ be a foliation on $(M,\gamma)$ compatible with the sutured structure. The \textit{relative Euler class} of $\mathcal{F}$, denoted by $e(\mathcal{F},\partial)$, is defined as $e(T\mathcal{F},s)\in H^{2}(M,\partial M)$, where $s$ is a canonical trivialization of $T\mathcal{F}|_{\partial M}$.
	\end{Def}
	
	\subsubsection{The parity condition and Thurston's inequality}
	
	Let $Y$ be a closed oriented $3$-manifold, and $S$ be a closed oriented surface in $Y$. It is well-known that the Euler class of a foliation $\mathcal{F}$ on $Y$ satisfies the \textit{parity condition}
	\[\langle e(\mathcal{F}),[S]\rangle\equiv0\mod 2.\]
    If $\mathcal{F}$ is $C^{\infty}$-Reebless and $S$ has no spherical components, then Thurston proves the adjunction inequality
	\[|\langle e(\mathcal{F}),[S]\rangle|\leq-\chi(S),\]
	known as the \textit{Thurston's inequality} \cite{Th86}. Both the parity condition and Thurston's inequality can be generalized to $C^{0}$ foliations on $3$-manifolds with toroidal boundaries.
	
	\begin{Lem}\label{Adjunction_inequality}
		Let $(M,\gamma)$ be a sutured manifold with toroidal boundary such that $A(\gamma)=\emptyset$. Let $\mathcal{F}$ be a foliation on $(M,\gamma)$ compatible with the sutured structure. Let $S$ be a compact oriented surface in $M$ such that $\partial S$ consists of essential simple closed curves in $\partial M$. Then we have
		\begin{itemize}
			\item [(1)]\textnormal{(Parity condition)}
			\[\langle e(\mathcal{F},\partial),[S]\rangle\equiv\chi(S)\mod 2.\]
			\item [(2)]\textnormal{(Thurston's inequality)}
			If $\mathcal{F}$ is Reebless and $S$ has no spherical or disk components, then
			\[|\langle e(\mathcal{F},\partial),[S]\rangle|\leq-\chi(S).\]
		\end{itemize}
	\end{Lem}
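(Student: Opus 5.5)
We prove Lemma \ref{Adjunction_inequality} by doubling $M$ along its boundary. The double is closed, so the classical parity condition and Thurston's inequality apply there, and we then pull the conclusions back to $M$.

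Since $A(\gamma)=\emptyset$, every component of $\partial M$ is either a torus in $T(\gamma)$ or a torus component of $R(\gamma)$. First we reduce to the case where all of $\partial M$ consists of leaves. Near each component of $T(\gamma)$ the foliation is transverse to the boundary torus with no Reeb annuli. We can therefore spin $\mathcal{F}$ along that torus, or alternatively glue on a thickened collar $\mathbb{T}\times I$ foliated by a turbulization, to make the torus a leaf. This modification does not change the homotopy class of the canonical section. Condition (1) of the canonical section (normal to $\partial M$ along $\gamma$) becomes, after the spiralling, a Lie group section on the new toral leaf. The modification creates no Reeb components, provided the induced foliation on the torus has no Reeb annuli; this is exactly where we use condition (2) of compatibility. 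We must also check that the construction produces no half-Reeb components. So $e(\mathcal{F},\partial)$ is unchanged and we may assume $\partial M$ is a union of toral leaves carrying Lie group sections.

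Next, form the double $DM=M\cup_{\partial M}\overline{M}$ with the doubled foliation $D\mathcal{F}$, reversing the coorientation on the mirror copy so that the orientations of the leaves match along $\partial M$. This gives a $C^{0}$ foliation on a closed manifold. Each boundary torus leaf has trivial tangent bundle with the Lie group section on both sides. By the gluing formula (Lemma \ref{Gluing_formula}) and the restriction formula (Lemma \ref{Restriction_formula}), the class $e(D\mathcal{F})$ restricts on each half to the relative class $e(\mathcal{F},\partial)$ with its canonical section, up to the sign coming from the reflection. The surface $S$ has boundary consisting of essential curves in $\partial M$, so it doubles to a closed surface $DS$ with $\chi(DS)=2\chi(S)$. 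The two halves contribute equally, which gives
\[\langle e(D\mathcal{F}),[DS]\rangle=2\langle e(\mathcal{F},\partial),[S]\rangle.\]
Checking that the orientation reversal makes the two contributions add rather than cancel is a computation that needs care.

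The factor of $2$ makes the doubling useless for parity, since the parity condition on $DS$ only says that an even number is even. We therefore prove (1) directly, using obstruction theory on $S$ itself. Pull $T\mathcal{F}|_{S}$ back to $S$ and compare it with $TS$ as an oriented plane bundle. Along $\partial S\subset\partial M$, the plane field $T\mathcal{F}$ is the tangent plane of the torus, and the canonical section is tangent to it. We can arrange that the Lie group section along the essential curve $\partial S$ is tangent to $\partial S$. Then $e(\mathcal{F},\partial)[S]$ and $\chi(S)=e(TS,\text{tangent to }\partial S)[S]$ are both relative Euler numbers of plane bundles over $S$. Their difference mod $2$ is a $w_{2}$-type obstruction, which vanishes because $S$ is embedded with trivial normal bundle, exactly as in the classical closed-case argument. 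This proves (1).

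For (2), Reeblessness of $\mathcal{F}$ passes to $D\mathcal{F}$, since the doubled toral leaves are incompressible and no Reeb component can cross them. Thurston's inequality on $DM$ then gives
\[2|\langle e(\mathcal{F},\partial),[S]\rangle|=|\langle e(D\mathcal{F}),[DS]\rangle|\leq-\chi(DS)=-2\chi(S).\]
Since Thurston proved the inequality for smooth foliations, we invoke its $C^{0}$ version here, or use Calegari's smoothing result. I expect the main obstacles to be the bookkeeping of the canonical section through the spiralling step and the orientation signs in the doubling; the rest is formal.
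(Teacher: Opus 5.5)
\textbf{There is a genuine gap in part (2).} Your key identity $\langle e(D\mathcal{F}),[DS]\rangle=2\langle e(\mathcal{F},\partial),[S]\rangle$ is false.

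After spinning, you double along boundary tori that are \emph{leaves}. The reflection $\sigma$ fixes the common leaf $\partial M$ pointwise. For leaf orientations to agree there, you must orient $\sigma(L)$ by pushing forward the orientation of $L$.
\begin{itemize}
\item Then $d\sigma$ is an orientation-preserving isomorphism $T\mathcal{F}\to T(D\mathcal{F})|_{\sigma(M)}$ that fixes the Lie group section, so $\sigma^{*}e(D\mathcal{F})=e(D\mathcal{F})$.
\item On the other hand, $DS=S\cup\sigma(\bar S)$ satisfies $\sigma_{*}[DS]=-[DS]$.
\item Hence $\langle e(D\mathcal{F}),[DS]\rangle=0$. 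Equivalently, in the gluing formula the mirror half contributes $\langle e(\mathcal{F},\partial),-[S]\rangle$.
\end{itemize}
The two halves cancel rather than add, and you only obtain $0\leq-2\chi(S)$. The paper's remark after the proof records exactly this.

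\textbf{What the paper does instead.} Doubling helps only along the \emph{transverse} boundary. There the doubled leaves $L\cup\sigma(L)$ force the reversed orientation $\sigma(\bar{\mathcal{F}})$, so both the plane-field orientation and the surface orientation flip, and the contributions add. The paper therefore works in the opposite order from you.
\begin{itemize}
\item It first doubles along $\partial_{\pitchfork}M$, checking Reeblessness via closed transversals on the tori without Reeb annuli.
\item It then caps each tangential torus leaf \emph{asymmetrically} with $S_{i}\times\mathbb{S}^{1}$, where $S_i$ has positive genus and one boundary component. The cap carries a spun product foliation with $\langle e(\mathcal{F}_{i},\partial),[S_{i}]\rangle=\chi(S_{i})$. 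The caps thus contribute exactly $\sum m_{i}\chi(S_{i})=\chi(\hat S)-\chi(S)$.
\item The closed-case inequality comes from approximating by a tight contact structure and applying Eliashberg's adjunction inequality.
\end{itemize}
That last step is the real $C^{0}$ input. Calegari's theorem only yields $C^{\infty,0}$, while Thurston's argument needs $C^{2}$.

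\textbf{The spinning reduction also fails on its own.} It need not preserve Reeblessness when a boundary torus is compressible. Take $M=D^{2}\times\mathbb{S}^{1}$ with $T(\gamma)=\partial M$, foliated by meridian disks. This foliation is compatible and Reebless, and the induced foliation by meridian circles has no Reeb annuli. Spinning along $\partial M$, or gluing on your collar, turns the disks into planes spiralling onto the new torus leaf. That is precisely a Reeb component. For the same reason, your assertion that the doubled toral leaves are incompressible is unjustified.

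\textbf{Part (1) is essentially right.} Your parity argument compares $T\mathcal{F}|_{S}$ with $TS$ under matching boundary framings and uses that $w_{2}$ of a stably trivial bundle vanishes. This is essentially the paper's argument. The paper makes it precise by doubling only $S$ and the plane field: $W=V\cup T(\sigma(\bar S))$ over $DS$ has $W\oplus\underline{\mathbb{R}}$ trivial, so $\langle e(W),[DS]\rangle=\langle e(\mathcal{F},\partial),[S]\rangle+\chi(S)$ is even. This requires no spinning at all.
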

	
	\begin{proof}
		(1) Consider the double $DM=M\cup_{\partial M}\bar{M}$. Regard $M$ as an oriented submanifold of $DM$. Denote by $\sigma$ the obvious involution on $DM$. The double $DS=S\cup_{\partial S}\sigma(\bar{S})$ of $S$ is a closed oriented surface in $DM$.
		
	    Let $s$ be a canonical section of $T\mathcal{F}|_{\partial M}$. Then $T\mathcal{F}|_{S}$ can be homotopic, as an oriented subbundle of $TM|_{S}$, to another subbundle $V$ over $S$, such that $V$ and $TS$ agree near $\partial S$ and $s|_{\partial S}$ is sent to the normal vector field $\nu$ of $\partial S\subset S$. We have $e(\mathcal{F},\partial)=e(\mathcal{F},s)=e(V,\nu)$. The union of $V$ over $S$ and the tangent bundle $T(\sigma(\bar{S}))$ over $\sigma(\bar{S})$ form an oriented plane field $W$ over $DS$.
		We have
		\[\langle e(W),[DS]\rangle=\langle e(V,\nu),[S]\rangle+\langle e(T(\sigma(\bar{S})),\nu),[\sigma(\bar{S})]\rangle=e(V,\nu)+\chi(S).\]
		On the other hand, $ W\oplus\underline{\mathbb{R}}\cong T(DM)|_{DS}\cong\underline{\mathbb{R}}^{3}$. So $e(W)\mod 2=w_{2}(W)=0$. So $\langle e(W),[DS]\rangle\equiv 0\mod 2$ and it follows that $\langle e(\mathcal{F},\partial),[S]\rangle\equiv\chi(S)\mod 2$.

        (2) We first consider the case $\partial_{\pitchfork}M=\emptyset$. Let $P_{1},\ldots,P_{n}$ be all boundary components of $M$. Choose the orientation of $P_{i}$ to coincide with the orientation of the leaves of $\mathcal{F}$. We can suppose that for each $i$, $\partial S\cap P_{i}$ consists of $m_{i}$ essential simple closed curves, and each of them represents the same class $\alpha_{i}$ in $H_{1}(P_{i})$. For each $i$, pick a compact oriented surface $S_{i}$ of positive genus with a single boundary. Let $M_{i}=S_{i}\times\mathbb{S}^{1}$. Choose an orientation-reversing diffeomorphism $\varphi_{i}\colon\partial M_{i}=\partial S_{i}\times\mathbb{S}^{1}\rightarrow P_{i}$ such that if $P_{i}\cap\partial S$ is non-empty, then $\varphi_{i}(\partial S_{i}\times\theta)$ represents the class $-\alpha_{i}\in H_{1}(P_{i})$. Let $\hat{M}$ be the closed oriented $3$-manifold obtained by gluing $\sqcup_{1\leq i\leq n}M_{i}$ to $M$ via $\sqcup_{1\leq i\leq n}\varphi_{i}\colon\sqcup_{1\leq i\leq n}\partial M_{i}\rightarrow\partial M$ and let $\hat{S}$ be the closed oriented surface in $\hat{M}$ obtained from $S$ by capping each component of $\partial S\cap P_{i}$ with a copy of $S_{i}$. Then $\hat{S}$ has no spherical components, and we have
        \begin{equation}\label{chi_equality}
            \chi(\hat{S})=\chi(S)+\sum\limits_{i=1}^{n}m_{i}\chi(S_{i}).
        \end{equation}

        We construct a foliation $\mathcal{F}_{i}$ on $M_{i}=S_{i}\times\mathbb{S}^{1}$ as follows. Choose a collar neighborhood $C_{i}\cong \partial S\times [0,1)$ with coordinates $(\psi,t)$. Fix a smooth function $h\colon (0,1)\rightarrow\mathbb{R}$ such that $h(t)=0$ for $t\geq 1/2$ and $\lim\limits_{t\rightarrow 0}h(t)=+\infty$. Define the smooth function $H_{i}\colon\mathrm{int}(S_{i})\rightarrow\mathbb{R}$ by letting $H_{i}(\psi,t)=h(t)$ in $C$ and $H_{i}=0$ outside $C$. The interior leaves of $\mathcal{F}'$ are given by $L_{\theta}=\{(x,\phi)\colon\phi=\theta+H_{i}(x)\mod 2\pi\}$, where we identify $\mathbb{S}^{1}$ with $\mathbb{R}/2\pi\mathbb{Z}$. Finally, add the boundary leaf $L=\partial S_{i}\times\mathbb{S}^{1}$ to $\mathcal{F}_{i}$. It can be checked that the orientation of $L$ is opposite to the boundary orientation of $\partial M_{i}$, thus it agrees with the orientation of $P_{i}$ in $\hat{M}$.
        So $\mathcal{F}$ and $\mathcal{F}_{i}\ (1\leq i\leq n)$ together form a Reebless foliation $\hat{\mathcal{F}}$ on $\hat{M}$.
        By \cite[Theorem 9.8]{BJ16}, $T\hat{\mathcal{F}}$ is homotopic to a tight contact structure $\xi$. It is well-known that $e(\xi)$ satisfies the adjunction inequality
        $\langle e(\xi),[\hat{S}]\rangle\geq\chi(\hat{S})$
        \cite[Theorem 2.2.1]{Elia92}. So we have
		\begin{equation}\label{adjun_inequality}
		    \langle e(\hat{\mathcal{F}}),[\hat{S}]\rangle\geq\chi(\hat{S}).
		\end{equation}
        
        Let $s_{i}$ be a canonical section of $T\mathcal{F}_{i}|_{\partial M_{i}}$. Identify $S_{i}$ with $S_{i}\times 1\subset M_{i}$. Then $T\mathcal{F}_{i}|_{S_{i}}$ can be homotopic, as an oriented subbundle of $TM_{i}|_{S_{i}}$, to $TS_{i}$ so that $s_{i}|_{\partial S_{i}}$ is sent to the normal vector field of $\partial S_{i}\subset S_{i}$. So we have $\langle e(\mathcal{F}_{i},\partial),[S_{i}]\rangle=\chi(S_{i})$. Note that $[\hat{S}\cap M_{i}]=m_{i}[S_{i}]$ in $H_{2}(M_{i},\partial M_{i})$. So we have
        \[
            \langle e(\hat{\mathcal{F}}),[\hat{S}]\rangle=\langle e(\mathcal{F},\partial),[S]\rangle+\sum\limits_{i=1}^{n}\langle e(\mathcal{F}_{i},\partial),m_{i}[S_{i}])\rangle=\langle e(\mathcal{F},\partial),[S]\rangle+\sum\limits_{i=1}^{n}m_{i}\chi(S_{i}).
        \]
        This equality together with (\ref{chi_equality}) and (\ref{adjun_inequality}) imply that $\langle e(\mathcal{F},\partial),[S]\rangle\geq\chi(S)$. Replacing $S$ by $\bar{S}$, we have $-\langle e(\mathcal{F},\partial),[S]\rangle\geq\chi(S)$. We deduce that
        \[
            |\langle e(\mathcal{F},\partial),[S]\rangle|\leq-\chi(S).
        \]
        
        In general, consider the double $D_{\pitchfork}M=M\cup_{\partial_{\pitchfork}M}\bar{M}$ along $\partial_{\pitchfork}M$ with the involution $\sigma$. Then $S$ doubles to a embedded oriented surface $D_{\pitchfork}S=S\cup\sigma(\bar{S})$ in $D_{\pitchfork}M$ and $\mathcal{F}$ doubles to an oriented foliation $D_{\pitchfork}\mathcal{F}=\mathcal{F}\cup\sigma(\bar{\mathcal{F}})$ on $D_{\pitchfork}M$.
        
        We claim that $D_{\pitchfork}\mathcal{F}$ is Reebless. Indeed, suppose that $R$ is a Reeb component of $D_{\pitchfork}\mathcal{F}$. If $\partial R$ lies entirely in $M$ or $\sigma(M)$, we can suppose that $\partial R\subset \mathrm{int}(M)$. Since $\mathcal{F}$ is Reebless, $R$ cannot lie entirely in $M$. So $\sigma(M)\subset\mathrm{int}(R)$. But then $\sigma(\partial R)$ would be a torus leaf in $\mathrm{int}(R)$, which is impossible. So $\partial R$ must meet some component $T$ of $\partial_{\pitchfork}M$. Since $\mathcal{F}|_{T}$ contains no Reeb annuli, it follows that every $1$-dimensional leaf of $\mathcal{F}|_{T}$ has a closed transversal on $T$. However, $\partial R$ has no closed transversal. This also leads to a contradiction. So $D_{\pitchfork}\mathcal{F}$ is Reebless.
        
        Note that $D_{\pitchfork}\mathcal{F}$ is tangent to $\partial(D_{\pitchfork}M)$. The previous discussion shows that
        \[
            |\langle e(D_{\pitchfork}\mathcal{F},\partial),[D_{\pitchfork}S]\rangle|\leq-\chi(D_{\pitchfork}S).
        \]
        Besides, we have
        \[
            \langle e(D_{\pitchfork}\mathcal{F},\partial),[D_{\pitchfork}S]\rangle=\langle e(\mathcal{F},\partial),[S]\rangle+\langle e(\sigma(\bar{\mathcal{F}}),\partial),[\sigma(\bar{S})]\rangle=2\langle e(\mathcal{F},\partial),[S]\rangle.
        \]
        Consequently,
        \[
            |\langle e(\mathcal{F},\partial),[S]\rangle|=\frac{1}{2}|\langle e(D_{\pitchfork}\mathcal{F},\partial),[D_{\pitchfork}S]\rangle|\leq-\frac{1}{2}\chi(D_{\pitchfork}S)=-\chi(S).
        \]
	\end{proof}
	
	\begin{Rem}
	    If $\mathcal{F}$ is at least $C^{2}$, then Thurston's approach is to perturb $S$ so that $S$ is transverse to the leaves of $\mathcal{F}$ except at finitely many saddle or center tangencies. If moreover $\mathcal{F}$ is Reebless and $S$ is incompressible, then $S$ can be further isotoped so that $S$ is transverse to the leaves of $\mathcal{F}$ except at finitely many saddle or circle tangencies. Once we place $S$ in such a nice position, both parity condition and Thurston's inequality follow from a simple index calculation. However, It is not clear to the author whether the argument can adapted to $C^{0}$-case.
        So we adopt an indirect approach, reducing the problem into the closed manifold case, which we know how to treat. 
	\end{Rem}

    \begin{Rem}
        Be aware that if both $\partial_{\pitchfork}M$ and $\partial_{\tau}M$ are non-empty, then the doubled foliation $D\mathcal{F}$ on $DM$ is non-orientable. Besides, if we double $M$ along $\partial_{\tau}M$, then $D_{\tau}\mathcal{F}=\mathcal{F}\cup\sigma(\mathcal{F})$ as oriented foliations, and it follows that $\langle e(D_{\tau}\mathcal{F}),[D_{\tau}S]\rangle=0$. So we choose to double $M$ along $\partial_{\pitchfork}M$ in the proof of (2).
    \end{Rem}
	
	\section{Standard laminations on compact surfaces}\label{Standard_lamination_on_surface}
	
	In this section, we assign an oriented essential lamination to each compact orientable surface, which we would use to construct laminations on JSJ pieces of graph manifolds.
	
	To begin with, let $\Sigma_{2}$ be a genus $2$ closed orientable surface. Choose a $2$-fold branched cover $p\colon\Sigma_{2}\rightarrow\mathbb{T}^{2}$. Let $f\colon\mathbb{T}^2\rightarrow \mathbb{T}^2$ be an arbitrary Anosov linear automorphism. Then $f$ lifts to a pseudo-Anosov map $f_{2}\colon\Sigma_{2}\rightarrow\Sigma_{2}$ with an orientable stable essential lamination $\lambda_{2}$. Equip $\lambda_{2}$ with an arbitrary orientation. The complementary region of $\lambda_{2}$ consists of two $4$-gons. 
	
	Let $\Sigma$ be a closed orientable surface of genus $g\geq1$. If $\Sigma$ is a torus, then we call any oriented linear foliation $\lambda$ with irrational slope on $\Sigma$ a \textit{standard essential lamination} on $\Sigma$. If $g\geq 2$, then choose an arbitrary $(g-1)$-fold cover $\pi\colon\Sigma\rightarrow\Sigma_{2}$. Let $\lambda$ be the pull-back of $\lambda_{2}$ to $\Sigma$. The oriented essential lamination $\lambda$ is called a \textit{standard lamination} on $\Sigma$. Note that in either case the complementary region of $\lambda$ consists of $2(g-1)$ $4$-gons.
	
	More generally, let $\Sigma$ be a compact orientable surface of genus $g\geq 1$ with $k$ boundaries. Fill a disk along each boundary component to obtain a closed surface $\hat{\Sigma}$. Let $\hat{\lambda}$ be a standard essential lamination of $\hat{\Sigma}$. Open up $k$ leaves of $\hat{\lambda}$ to obtain a new lamination $\lambda$ whose complementary region consists of $2(g-1)$ $4$-gons and $k$ $2$-gons. After an isotopy, we can suppose that each $2$-gon contains exactly one filling disk. Then $\lambda$ can be regarded as an oriented essential lamination on $\Sigma$. We call $\lambda$ a \textit{standard lamination} on $\Sigma$. Note that the complementary region of $\lambda$ consists of $2(g-1)$ $4$-gons and $k$ holed $2$-gons.
	
	\section{Taut foliations on sutured solid tori and sutured torus shells}\label{SST}
	
	Let $(M,\gamma)$ be a sutured manifold. $(M,\gamma)$ is called a \textit{sutured solid torus}, if $M$ is topologically a solid torus and $\gamma$ consists of $2m$ parallel annuli on the boundary for some $m\geq 1$; $(M,\gamma)$ is called a \textit{half sutured torus shell}, if $M$ is topologically a $\mathbb{T}^{2}\times[0,1]$, $T(\gamma)=\mathbb{T}^{2}\times\{0\}$ and $A(\gamma)$ consists of $2m$ parallel annuli on $\mathbb{T}^{2}\times\{1\}$ for some $m\geq 1$; $(M,\gamma)$ is called a \textit{sutured torus shell}, if $M$ is topologically a $\mathbb{T}^{2}\times[0,1]$, and $\gamma$ consists of $2m_{0}$ parallel annuli on $\mathbb{T}^{2}\times\{0\}$ together with $2m_{1}$ parallel annuli on $\mathbb{T}^{2}\times\{1\}$ for some $m_{0},m_{1}\geq 1$.
	
	It is well-known that taut solid tori can be tautly foliated by stacks of saddles (see \cite[\S 3.7]{Yazdi}, see also \cite[\S 11.1]{Fol2}). The construction can also yield taut foliations on sutured torus shells. In this section, we give an explicit description of these foliations and calculate their relative Euler classes.
	
	We begin with the standard model for half sutured torus shells. We use the coordinate $(x,y,h)$ for $\mathbb{R}^{3}$. Also write $z=x+y\sqrt{-1}$ for the first two coordinates. Let $\mathbb{D}$ be the closed unit disk in $\mathbb{C}$. Identify the circle $\mathbb{S}^{1}$ with $\mathbb{R}/2\pi\mathbb{Z}$. Choose an arbitrary smooth increasing function $\phi\colon(-2,2)\rightarrow\mathbb{R}$ such that
	\begin{enumerate}
	    \item $\lim\limits_{y\rightarrow\pm2}\phi(y)=\pm\infty$.
        \item $\phi(y)=0$ in a neighborhood of $[-1,1]$.
	\end{enumerate}
	Let
	$\tilde{N}=([-2,2]\times [-2,2]\backslash\mathrm{int}(\mathbb{D}))\times\mathbb{R},\ \tilde{R}_{\pm}=[-2,2]\times\{\pm2\}\times\mathbb{R},\ \tilde{\gamma}=\partial\tilde{N}\backslash\mathrm{int}(\tilde{R}_{+}\cup\tilde{R}_{-})$.
	Consider the foliations $\tilde{\mathcal{F}}^{\pm}$ on $\tilde{N}$ with the \textit{interior leaves} $\{\tilde{F}^{\pm}(t)\}_{t\in\mathbb{R}}$ given by
	\[
        \tilde{F}^{\pm}(t)=\{(x,y,h)\in \tilde{N}\colon h=\pm\phi(y)+t\},
    \]
	and the \textit{boundary leaves} $\tilde{R}_{+}$ and $\tilde{R}_{-}$. Choose the orientation of $\tilde{\mathcal{F}}^{\pm}$ such that the normal vectors of $\tilde{R}_{+}$ (resp. $\tilde{R}_{-}$) (with respect to the leaf orientations) point out of (resp. into) $\tilde{N}$.
	
	Let $m,p,q$ be integers with $m,p>0$ and $\gcd(p,q)=1$.
	Consider the $mp$-fold cover $\pi_{m,p,q}\colon(\mathbb{C}\backslash \mathrm{int}(\mathbb{D}))\times\mathbb{R}\rightarrow(\mathbb{C}\backslash \mathrm{int}(\mathbb{D}))\times\mathbb{R}$ given by
	\[
       \pi_{m,p,q}(z,h)=(e^{-mq h\sqrt{-1}}z^{mp},h).
    \]
	Let $\tilde{N}_{m,p,q}, \tilde{R}_{m,p,q,\pm}, \tilde{\gamma}_{m,p,q}, \tilde{\mathcal{F}}_{m,p,q}^{\pm}$ be the pull-back objects of $\tilde{N}, \tilde{R}_{\pm}, \tilde{\gamma}, \tilde{\mathcal{F}}^{\pm}$ via $\pi_{m,p,q}$, respectively. Note that these objects are invariant under the translation
	\[
       \tau(z,h)=(z,h+2\pi).
    \]
	Let $N_{m,p,q}, R_{m,p,q,\pm}, \gamma_{m,p,q}, \mathcal{F}_{m,p,q}^{\pm}$ be the quotients of $\tilde{N}_{m,p,q}, \tilde{R}_{m,p,q,\pm}, \gamma_{m,p,q}, \tilde{\mathcal{F}}_{m,p,q}^{\pm}$ by the $\mathbb{Z}$-action generated by $\tau$, respectively.
	
	It turns out that $(N_{m,p,q},\gamma_{m,p,q})$ is a half sutured torus shell with
    \[
        R_{\pm}(\gamma_{m,p,q})=R_{m,p,q,\pm},\quad T(\gamma_{m,p,q})=\partial\mathbb{D}\times\mathbb{S}^{1}.
    \]
	$\mathcal{F}_{m,p,q}^{\pm}$ are taut foliations on $(N_{m,p,q},\gamma_{m,p,q})$ with the \textit{interior leaves} $\{F_{m,p,q}^{\pm}(\theta)\}_{\theta\in\mathbb{S}^{1}}$ given by
	\[
        F_{m,p,q}^{\pm}(\theta)=\{(z,h)\in N_{m,p,q}\colon h=\pm\phi(\mathrm{Im}(e^{-mq h\sqrt{-1}}z^{mp}))+\theta\}.
    \]
    Let $T=\partial\mathbb{D}\times\mathbb{S}^{1}$, $\mu=\{(e^{i\theta_{1}},0)\colon\theta_{1}\in\mathbb{S}^{1}\}$ and $\lambda=\{(1,\theta_{2})\colon\theta_{2}\in\mathbb{S}^{1}\}$. Orient $\mu$ and $\lambda$ by the tangent fields $\partial_{\theta_{1}}$ and $\partial_{\theta_{2}}$, respectively. Then $N_{m,p,q}\cong T\times[0,1]$ and we have $T(\gamma_{m,p,q})=T\times\{0\}$ and $s(\gamma_{m,p,q})$ consists of $2m$ simple closed curves of slope $q[\mu]+p[\lambda]$ on $T\times\{1\}$ under the isomorphism.
    
    \begin{Lem}\label{Euler_class_for_standard_model}
    	The relative Euler class of $\mathcal{F}_{m,p,q}^{\pm}$ is given by
    	\[
            e(\mathcal{F}_{m,p,q}^{\pm},\partial)=\pm PD(mq[\mu]+mp[\lambda]).
        \]
    \end{Lem}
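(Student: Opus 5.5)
The plan is to compute $e(\mathcal{F}^{\pm}_{m,p,q},\partial)\in H^{2}(N,\partial N)$ by evaluating it on a basis of $H_{2}(N,\partial N)\cong H_{1}(N)$. Since $N\cong T\times[0,1]$, Lefschetz duality identifies $H^{2}(N,\partial N)$ with $H_{1}(N)\cong H_{1}(T)$. A basis of $H_{2}(N,\partial N)$ is given by the two vertical annuli $A_{\mu}=\mu\times[0,1]$ and $A_{\lambda}=\lambda\times[0,1]$. Up to the orientation conventions for $PD$, the claim is then equivalent to
\[
\langle e,[A_{\lambda}]\rangle=\pm mq\,\epsilon,\qquad \langle e,[A_{\mu}]\rangle=\mp mp\,\epsilon,
\]
for a fixed sign $\epsilon$ coming from the intersection pairing $[\mu]\cdot[\lambda]=1$ on $T$. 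Equivalently, I will show that for an annulus $A_{c}=c\times[0,1]$ over a curve $c$, the pairing is $\pm$ the algebraic intersection of $c$ with a suture, namely $\pm(c\cdot(mq[\mu]+mp[\lambda]))$.

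To compute each pairing, I will use the obstruction-theoretic description of the relative Euler class. On $\partial A_{c}$ the canonical section is prescribed. On $T\times\{0\}$ it is the Lie group section; there $T\mathcal{F}$ is tangent to $T$, and in the coordinates $(\theta_{1},\theta_{2})$ the section can be taken to be $\partial_{\theta_{1}}$ or $\partial_{\theta_{2}}$. On $T\times\{1\}$ it is normal to $\partial N$ along $\gamma$ and transverse to the $\mathbb{S}^{1}$-factors on the annuli $R_{\pm}$. The main idea is to work with the explicit model rather than the abstract one. Upstairs in $\tilde N$, before taking the cover, the leaves are graphs $h=\pm\phi(y)+t$, so $T\tilde{\mathcal F}$ contains the horizontal vector field $\partial_{x}$ everywhere. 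This gives a global nonvanishing section $X$ of $T\tilde{\mathcal F}$, invariant under $h$-translation. Pulling back by $\pi_{m,p,q}$, I will take the lift of $\partial_x$ as a global section $\tilde X$ of $T\mathcal F^{\pm}_{m,p,q}$. It descends through $\tau$ because the cover formula commutes with $h\mapsto h+2\pi$.

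With a global section available, Lemma \ref{Difference_formula} (and the remark that an extendable section has zero relative class) gives
\[
e(\mathcal F,\partial)=d\bigl([s]-[\tilde X|_{\partial N}]\bigr),
\]
where $s$ is the canonical section. The boundary map $d\colon H^{1}(\partial N)\to H^{2}(N,\partial N)$ can be evaluated directly: $\langle d\alpha,[A_{c}]\rangle=\alpha(c\times\{1\})-\alpha(c\times\{0\})$. So everything reduces to counting winding numbers of $\tilde X$ relative to $s$ along the curves $\mu$ and $\lambda$ on the two boundary tori.
\begin{itemize}
\item On $T\times\{0\}$ (the circle $|z|=1$ upstairs, near which $\phi=0$), the leaves are horizontal and $T\mathcal F=TT$. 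The pullback of $\partial_{x}$ under $z\mapsto e^{-imqh}z^{mp}$ rotates relative to the coordinate frame $\partial_{\theta_1}$ with winding number $-(mp-1)$ along $\mu$ and $mq$ along $\lambda$. Here the count comes from the derivative of $w=e^{-imqh}z^{mp}$: one has $dw\propto z^{mp-1}e^{-imqh}\,dz$, so the pullback of a constant vector is proportional to $\bar{}$-conjugate powers of $z^{mp-1}e^{-imqh}$.
\item On $T\times\{1\}$, which is the square boundary upstairs, the field $\partial_{x}$ is normal to the vertical sides of $\gamma$ and tangent to $R_{\pm}$, transverse to the circle direction. Before the cover it therefore agrees up to homotopy with the canonical section. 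After the cover it still agrees locally, and this works in every fundamental domain. The one subtlety is the sign flip of the normal on the two vertical sides $x=\pm2$. Along a closed curve the contributions from the $2m$ annuli cancel in pairs, so the relative winding is $0$ apart from the turning of the square's boundary itself. That turning contributes the $\pm1$ correction needed to convert $mp-1$ into $mp$.
\end{itemize}
Combining the two boundary counts gives the pairings $\pm mq$ and $\mp mp$, which is the claimed $\pm PD(mq[\mu]+mp[\lambda])$. The $\pm$ in the final formula comes from $\mathcal F^{-}$ having the opposite leaf orientation convention to $\mathcal F^{+}$, which negates the frame orientation.

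The main obstacle is the bookkeeping in these two winding computations, and especially getting the correction terms exactly right: the turning of $\partial_{\theta_1}$ relative to the lift of $\partial_x$ on $|z|=1$, and the behavior at the corners of the square. If the direct count becomes messy, I would fall back on checking the case $m=p=1$, $q=0$, where the claimed class is $PD[\lambda]$. I would then use naturality under the covering $\pi_{m,p,q}$ restricted to $\partial N$ to reduce to linear algebra on $H_{1}(T)$.
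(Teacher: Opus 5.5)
Your overall route is the paper's: normalize $\pi_{m,p,q}^{*}\partial_{x}$ to a global unit section $s$, so that $e(T\mathcal{F}^{\pm}_{m,p,q},s)=0$, apply the difference formula, and evaluate $d$ on the annuli $[0,1]\times\mu$ and $[0,1]\times\lambda$. The problem is the boundary bookkeeping, which contains two errors that happen to cancel.

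First, $T=\partial\mathbb{D}\times\mathbb{S}^{1}$ is the toroidal suture $T(\gamma_{m,p,q})$, not a tangential torus leaf. Near $|z|=1$ one has $\phi=0$, so the leaves are the horizontal planes $h=\theta$, which are transverse to the vertical torus $T$. Hence $T\mathcal{F}^{\pm}_{m,p,q}|_{T}$ is the horizontal plane field $V=\mathrm{span}(\partial_{x},\partial_{y})$ (oriented according to $\pm$), not $TT$. The canonical section on $T$ is therefore governed by condition (1), normality to $\partial N$: it is the radial field $s_{0}=e^{\theta_{1}\sqrt{-1}}$ in $V\cong\underline{\mathbb{C}}$, not a Lie group section. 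Note that $\partial_{\theta_{2}}=\partial_{h}$ is not even tangent to the leaves. The lifted field is $s|_{T}=e^{((1-mp)\theta_{1}+mq\theta_{2})\sqrt{-1}}$. So your numbers $1-mp$ and $mq$ are windings relative to the constant trivialization of $V$, not relative to $\partial_{\theta_{1}}$. Relative to $s_{0}$, which itself winds once along $\mu$, one gets $[s_{0}|_{T}]-[s|_{T}]=[mp\,\theta_{1}-mq\,\theta_{2}]$. Pairing $d$ of this class with the two annuli already yields $\pm PD(mq[\mu]+mp[\lambda])$.

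Second, on the outer torus your own observations show that the lift of $\partial_{x}$ is normal to the $2m$ annuli of $\gamma$. It is outward on half of them and inward on the other half, which is exactly what condition (3) forces at the convex corners. It is also tangent to $R_{\pm}$ and transverse to the fibers there. So it \emph{is} a canonical section on $\partial N\setminus T$, and the outer torus contributes exactly $0$; this is the paper's observation. There is no ``turning of the square'' correction. The canonical section is not the normal field of the square: along $R_{\pm}$ that normal $\pm\partial_{y}$ does not even lie in $T\mathcal{F}=TR_{\pm}$. So the turning of $\partial([-2,2]^{2})$ never enters.

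The $+1$ that converts $mp-1$ into $mp$ comes from the radial canonical section on the inner torus, not from the outer one. If you correct either of your two steps and leave the other as is, you do not get $mp$. The same misidentification would affect the base case $m=p=1$, $q=0$ of your fallback. The covering reduction itself is sound once the inner canonical section is taken to be radial: the covering $N_{m,p,q}\to N_{1,1,0}$ induced by $\pi_{m,p,q}$ sends radial vectors to radial vectors, and the preimage of $\lambda$ is $m$ curves of class $q[\mu]+p[\lambda]$.
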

    
    \begin{proof}
    	Let $V\rightarrow T$ be the subbundle of $T\mathbb{R}^{3}|_{T}$ generated by $\partial_{x}$ and $\partial_{y}$. We can naturally identify $V$ with the trivial complex line bundle $\underline{\mathbb{C}}$ over $T$. Choose the orientation of $V$ to coincide with the complex orientation. It can be checked that $T\mathcal{F}_{m,p,q}^{-}|_{T}=V$ and $T\mathcal{F}_{m,p,q}^{+}|_{T}=\bar{V}$.
    	
    	Note that $\partial_{x}$ is tangent to the leaves of $\tilde{\mathcal{F}}^{\pm}$.  So $\pi_{m,p,q}^{*}\partial_{x}$ can be normalized to a unit section $s$ of both $T\mathcal{F}_{m,p,q}^{\pm}$.
    	Then we have
    	\[
            e(T\mathcal{F}_{m,p,q}^{\pm},s)=0.
        \]
    	Here we denote $s|_{\partial N_{m,p,q}}$ also by $s$ for simplicity. Let $s_{0}$ be a canonical section of $T\mathcal{F}_{m,p,q}^{\pm}|_{\partial N_{m,p,q}}$.
    	We can suppose that $s_{0}|_{T}$ is given by
        \[
            s_{0}|_{(e^{\theta_{1}\sqrt{-1}},\theta_{2})}=e^{\theta_{1}\sqrt{-1}}.
        \]
    	It can be checked that $s|_{\partial N_{m,p,q}\backslash T}$ is homotopic to $s_{0}|_{\partial N_{m,p,q}\backslash T}$.
    	So we have
    	\begin{align*}
    		e(\mathcal{F}_{m,p,q}^{\pm},\partial)&=e(T\mathcal{F}_{m,p,q}^{\pm},s_{0})-e(T\mathcal{F}_{m,p,q}^{\pm},s)\\
    		&=d([s_{0}]_{T\mathcal{F}_{m,p,q}^{\pm}}-[s]_{T\mathcal{F}_{m,p,q}^{\pm}})\\
    		&=d([s_{0}|_{T}]_{T\mathcal{F}_{m,p,q}^{\pm}}-[s|_{T}]_{T\mathcal{F}_{m,p,q}^{\pm}})\\
    		&=\mp d([s_{0}|_{T}]_{V}-[s|_{T}]_{V}),
    	\end{align*}
    	where $d\colon H^{1}(\partial N_{m,p,q})\rightarrow H^{2}(N_{m,p,q},\partial N_{m,p,q})$ is the coboundary map for the pair $(N_{m,p,q},\partial N_{m,p,q})$.
    	
    	Choose the branch of the multivalued function $z^{1/mp}$ near $z_{0}=e^{mp\theta\sqrt{-1}}$ such that $z_{0}^{1/mp}=e^{\theta\sqrt{-1}}$.
    	Then we have
    	\begin{align*}
    		\pi_{m,p,q}^{*}(\partial_{x})|_{(e^{\theta\sqrt{-1}},h)}=&\left.\dfrac{\mathrm{d}}{\mathrm{d}t}\right|_{t=0}[e^{mqh\sqrt{-1}}(e^{-mqh\sqrt{-1}}e^{mp\theta\sqrt{-1}}+t)]^{1/mp}\\
    		=&\left.\dfrac{\mathrm{d}}{\mathrm{d}t}\right|_{t=0}(z_{0}+e^{mqh\sqrt{-1}}t)^{1/mp}\\
    		=&\dfrac{1}{mp}e^{mqh\sqrt{-1}}z_{0}^{-1+1/mp}\\
    		=&\dfrac{1}{mp}e^{((1-mp)\theta+mqh)\sqrt{-1}}.
    	\end{align*} 
        So we have
        \[
            s|_{(e^{\theta_{1}\sqrt{-1}},\theta_{2})}=e^{((1-mp)\theta_{1}+mq\theta_{2})\sqrt{-1}}.
        \]
        It follows that $[s_{0}|_{T}]_{V}-[s|_{T}]_{V}\in H^{1}(T)$ is represented by the map $f\colon T\rightarrow\mathbb{S}^{1}$ given by $f(e^{\theta_{1}\sqrt{-1}},\theta_{2})=mp\theta_{1}-mq\theta_{2}$.
        
        Let $A_{1}=[0,1]\times\mu$ and $A_{2}=[0,1]\times\lambda$ be the properly embedded annuli in $N_{m,p,q}$ with the product orientations. Then $H_{2}(N_{m,p,q},\partial N_{m,p,q})\cong\mathbb{Z}^{2}$ is generated by $[A_{1}]$ and $[A_{2}]$.
        We have
        \begin{align*}
        	\langle d([f]),[A_{1}]\rangle&=\langle[f],[\partial A_{1}\cap T]\rangle=\langle[f],-[\mu]\rangle=-\deg(f|_{\mu})=-mp\\
        	&=-\langle PD(mq[\mu]+mp[\lambda]),[A_{1}]\rangle,\\
        	\langle d([f]),[A_{2}]\rangle&=\langle[f],[\partial A_{2}\cap T]\rangle=\langle[f],-[\lambda]\rangle=-\deg(f|_{\lambda})=mq\\
        	&=-\langle PD(mq[\mu]+mp[\lambda]),[A_{2}]\rangle.
        \end{align*}
        Since $H^{2}(N_{m,p,q},\partial N_{m,p,q})\cong H_{1}(N_{m,p,q})$ is torsion-free, it follows that
        \[
            d([f])=-PD(mq[\mu]+mp[\lambda]).
        \]
        Therefore
        \[
            e(\mathcal{F}_{m,p,q}^{\pm},\partial)=\mp d([s_{0}|_{T}]_{V}-[s|_{T}]_{V})=\mp d([f])=\pm PD(mq[\mu]+mp[\lambda]).
        \]
    \end{proof}
	
	\begin{Lem}\label{Foliation_on_sutured_torus_shell}
		Let $\mathbb{T}$ be an oriented torus. Let $m_{0},m_{1}$ be positive integers. Let $\alpha_{0},\alpha_{1}$ be primitive classes in $H_{1}(\mathbb{T})$ with $\alpha_{0}\not=\pm\alpha_{1}$. Let $(M,\gamma)$ be the sutured torus shell such that $M=\mathbb{T}\times [0,1]$ and $s(\gamma)$ consists of $2m_{0}$ simple closed curves of slope $[\alpha_{0}]$ on $\mathbb{T}\times\{0\}$ and $2m_{1}$ simple closed curves of slope $[\alpha_{1}]$ on $\mathbb{T}\times\{1\}$. Then for any $\epsilon_{0},\epsilon_{1}\in\{\pm1\}$, there is a taut foliation $\mathcal{F}$ on $(M,\gamma)$ such that
		\[
            e(\mathcal{F},\partial)=PD(\epsilon_{0}m_{0}\alpha_{0}+\epsilon_{1}m_{1}\alpha_{1}).
        \]
	\end{Lem}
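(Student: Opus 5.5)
The plan is to build $\mathcal{F}$ by gluing two half sutured torus shells along a torus leaf, and to compute its relative Euler class with the gluing formula (Lemma \ref{Gluing_formula}) and Lemma \ref{Euler_class_for_standard_model}.

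\textbf{Step 1: the pieces.} Split $M=\mathbb{T}\times[0,1]$ along $\mathbb{T}\times\{1/2\}$ into $M_{0}=\mathbb{T}\times[0,1/2]$ and $M_{1}=\mathbb{T}\times[1/2,1]$. Each $M_{j}$ is a half sutured torus shell: $T(\gamma_{j})=\mathbb{T}\times\{1/2\}$, and $A(\gamma_{j})$ is the original $2m_{j}$ annuli of slope $\alpha_{j}$. For each $j$, choose an orientation-preserving identification of $M_{j}$ with the standard model $N_{m_{j},p_{j},q_{j}}$. It should send the torus $T=\partial\mathbb{D}\times\mathbb{S}^{1}$ to $\mathbb{T}\times\{1/2\}$ and satisfy $q_{j}[\mu_{j}]+p_{j}[\lambda_{j}]=\alpha_{j}$. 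Such a basis $(\mu_{j},\lambda_{j})$ with $p_{j}>0$ exists because $\alpha_{j}$ is primitive; replace $\alpha_{j}$ by $-\alpha_{j}$ if necessary, since the slope is unoriented and this can be absorbed into the sign. On $M_{j}$ take $\mathcal{F}_{j}=\mathcal{F}^{\pm}_{m_{j},p_{j},q_{j}}$, choosing the sign so that its relative class is $\epsilon_{j}\,\mathrm{PD}(m_{j}\alpha_{j})$ in $H^{2}(M_{j},\partial M_{j})$. Lemma \ref{Euler_class_for_standard_model} shows both signs are available.

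\textbf{Step 2: orientations of the middle leaf.} The torus $\mathbb{T}\times\{1/2\}$ is a leaf of both foliations. Its transverse orientation must agree from the two sides: it should be an $R_{+}$-type leaf for one side and $R_{-}$-type for the other. This is the main obstacle, because the sign choice in the model is tied to this orientation. The ingredients to use are the following.
\begin{itemize}
\item Reversing the orientation of $\mathbb{T}$ in one model (an orientation-reversing reparametrization composed with $h\mapsto -h$) swaps $R_{\pm}$ on $T$.
\item The two models $\mathcal{F}^{+}$ and $\mathcal{F}^{-}$ differ by reflection.
\item Reversing all leaf orientations of a piece negates its relative Euler class but preserves tautness.
\end{itemize}
I expect that at least one of these operations flips the middle-leaf coorientation while keeping the relative class equal to $\epsilon_{j}m_{j}\alpha_{j}$; if not, it should keep the class at least up to sign, which can then be matched. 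Failing a clean argument, I would instead insert a thin product collar $\mathbb{T}\times I$ foliated by spiraling leaves. Its relative Euler class vanishes because the Lie group section extends. It converts an $R_{+}/R_{+}$ mismatch into a transverse boundary, giving a foliation with no torus leaf in the middle.

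\textbf{Step 3: tautness and the gluing computation.} The glued foliation has no Reeb annuli on $\gamma$. Every interior leaf of each model meets a transverse arc from $R_{-}$ to $R_{+}$, and concatenating these through the middle leaf gives arcs crossing the middle torus. Since $\alpha_{0}\neq\pm\alpha_{1}$, the suture data are genuinely different, and no annulus or torus leaf can appear other than the middle torus; the middle torus meets the concatenated transverse arcs, so tautness holds. For the Euler class, take $Y=\partial M\cup(\mathbb{T}\times\{1/2\})$ with the section that is canonical on each piece; the Lie group section on the middle torus agrees from both sides. Lemma \ref{Gluing_formula} gives
\[e(T\mathcal{F},s)=\iota_{0}e(\mathcal{F}_{0},\partial)+\iota_{1}e(\mathcal{F}_{1},\partial).\]
Lemma \ref{Restriction_formula} then maps this to $H^{2}(M,\partial M)$, where it becomes $e(\mathcal{F},\partial)$. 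The images of $\mathrm{PD}(m_{j}\alpha_{j})$ are $\mathrm{PD}(m_{j}\alpha_{j})$ in $H^{2}(M,\partial M)\cong H_{1}(M)\cong H_{1}(\mathbb{T})$, so $e(\mathcal{F},\partial)=\mathrm{PD}(\epsilon_{0}m_{0}\alpha_{0}+\epsilon_{1}m_{1}\alpha_{1})$.
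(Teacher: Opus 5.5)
Your decomposition matches the paper's: cut along $\mathbb{T}\times\{1/2\}$ into two half sutured torus shells, use the models $N_{m,p,q}$, glue, and apply Lemmas \ref{Gluing_formula} and \ref{Restriction_formula}. There is a genuine gap, however: Step 2 misidentifies the gluing problem, and the real one is left unsolved. Since $\mathbb{T}\times\{1/2\}=T(\gamma_{j})$ lies in $\gamma_{j}$, the foliation is \emph{transverse} to this torus, not tangent to it. Near $T=\partial\mathbb{D}\times\mathbb{S}^{1}$ the model leaves are the horizontal surfaces $\{h=\mathrm{const}\}$, so $\mathcal{F}^{\pm}_{m,p,q}$ meets $T$ in circles of slope $[\mu]$. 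Moreover, $\mathcal{F}^{+}$ and $\mathcal{F}^{-}$ induce opposite orientations there, because $T\mathcal{F}^{+}_{m,p,q}|_{T}=\bar V$ while $T\mathcal{F}^{-}_{m,p,q}|_{T}=V$. So there is no middle leaf and no $R_{\pm}$ matching to arrange. What must match is the oriented $1$-dimensional foliation induced on $\mathbb{T}\times\{1/2\}$ from the two sides. Your Step 1 chooses $(\mu_{j},\lambda_{j})$ independently for $j=0,1$, so in general the meridian slopes $\mu_{0},\mu_{1}$ differ and the pieces do not glue at all. Fixing each $\epsilon_{j}$ by choosing $\mathcal{F}^{+}$ or $\mathcal{F}^{-}$ separately also reverses the coorientation along the middle torus on that side, so the two signs cannot be chosen independently this way. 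Neither the speculative operations in Step 2 nor the spiraling-collar fallback (which presupposes a torus leaf) addresses either problem.

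The missing idea is to fix one common meridian class and let its position, rather than the choice of $\mathcal{F}^{\pm}$, produce the signs.
Choose a primitive $\alpha$ with $\epsilon_{0}Q_{\mathbb{T}}(\alpha,\alpha_{0})>0$ and $\epsilon_{1}Q_{\mathbb{T}}(\alpha,\alpha_{1})>0$, and $\beta$ with $Q_{\mathbb{T}}(\alpha,\beta)=1$. Then $\epsilon_{j}\alpha_{j}=p_{j}\beta+q_{j}\alpha$ with $p_{j}>0$.
Identify $M_{0}\cong N_{m_{0},p_{0},-q_{0}}$ via $\alpha\mapsto-[\mu]$, $\beta\mapsto[\lambda]$, and $M_{1}\cong N_{m_{1},p_{1},q_{1}}$ via $\alpha\mapsto[\mu]$, $\beta\mapsto[\lambda]$. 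The sign difference is forced by orientation-preservation, since the pieces lie on opposite sides of the torus.
Use $\mathcal{F}^{+}$ on \emph{both} pieces. Both then induce circles of slope $\alpha$ with compatible orientations, and Lemma \ref{Euler_class_for_standard_model} gives $PD(\epsilon_{j}m_{j}\alpha_{j})$ on each piece.

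The existence of $\alpha$ is exactly where the hypothesis $\alpha_{0}\neq\pm\alpha_{1}$ enters. Two open half-planes bounded by distinct lines meet in an open sector containing primitive lattice points; if $\alpha_{0}=\pm\alpha_{1}$, one sign pattern admits no such $\alpha$. The hypothesis has nothing to do with excluding torus or annulus leaves, as your Step 3 suggests.

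Finally, on the middle torus the canonical section is the normal field (condition (1) for $\gamma$), not a Lie group section. The normals from the two sides differ by a rotation by $\pi$ inside $T\mathcal{F}$, hence are homotopic. This is what actually justifies your gluing-formula computation in Step 3.
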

	
	\begin{proof}
	    Let $Q_{\mathbb{T}}\colon H_{1}(\mathbb{T})\times H_{1}(\mathbb{T})\rightarrow\mathbb{Z}$ be the intersection form. Choose a primitive class $\alpha\in H_{1}(\mathbb{T})$ such that $\epsilon_{0}Q_{\mathbb{T}}(\alpha,\alpha_{0})>0$ and $\epsilon_{1}Q_{\mathbb{T}}(\alpha,\alpha_{1})>0$, and a primitive class $\beta\in H_{1}(\mathbb{T})$ with $Q_{\mathbb{T}^{2}}(\alpha,\beta)=1$.
		Then $\epsilon_{0}\alpha_{0}=p_{0}\beta+q_{0}\alpha$ and $\epsilon_{1}\alpha_{1}=p_{1}\beta+q_{1}\alpha$ for some integer $p_{0},q_{0},p_{1},q_{1}$ with $p_{0},p_{1}>0$.
		Consider the sutured submanifolds
        \begin{align*}
            (M_{0},\gamma_{0})&=(\mathbb{T}\times[0,1/2],(\gamma\cap M_{0})\cup(\mathbb{T}\times\{1/2\})),\\
            (M_{1},\gamma_{1})&=(\mathbb{T}\times[1/2,1],(\gamma\cap M_{1})\cup(\mathbb{T}\times\{1/2\})).
        \end{align*}
		Then there is an isomorphism $\Phi_{0}\colon(M_{0},\gamma_{0})\xrightarrow{\cong}(N_{m_{0},p_{0},-q_{0}})$ such that $(\Phi_{0})_{*}(\alpha)=-[\mu]$ and $(\Phi_{0})_{*}(\beta)=[\lambda]$,
		and we obtain a taut foliation $\mathcal{F}_{0}=\Phi_{0}^{*}\mathcal{F}_{m_{0},p_{0},-q_{0}}^{+}$
		on $(M_{0},\gamma_{0})$.
		By Lemma \ref{Euler_class_for_standard_model}, we have
		\begin{align*}
			e(\mathcal{F}_{0},\partial)
			&=\Phi_{0}^{*}(e(\mathcal{F}_{m_{0},p_{0},-q_{0}}^{+},\partial))\\
			&=\Phi_{0}^{*}(PD(-m_{0}q_{0}[\mu]+m_{0}p_{0}[\lambda]))\\
			&=PD(m_{0}q_{0}\alpha+m_{0}p_{0}\beta)\\
			&=PD(\epsilon_{0}m_{0}\alpha_{0}).
		\end{align*}
		Similarly, there is an isomorphism $\Phi_{1}\colon(M_{1},\gamma_{1})\xrightarrow{\cong}(N_{m_{1},p_{1},q_{1}})$ such that $(\Phi_{1})_{*}(\alpha)=[\mu]$ and $(\Phi_{1})_{*}(\beta)=[\lambda]$,\and we obtain a taut foliation $\mathcal{F}_{1}=\Phi_{1}^{*}\mathcal{F}_{m_{1},p_{1},q_{1}}^{+}$
		  on $(M_{1},\gamma_{1})$.
		  By Lemma \ref{Euler_class_for_standard_model}, we have
		  \begin{align*}
		 	  e(\mathcal{F}_{1},\partial)
		 	&=\Phi_{1}^{*}(e(\mathcal{F}_{m_{1},p_{1},q_{1}}^{+},\partial))\\
		 	&=\Phi_{1}^{*}(PD(m_{1}q_{1}[\mu]+m_{1}p_{1}[\lambda]))\\
		 	&=PD(m_{1}q_{1}\alpha+m_{1}p_{1}\beta)\\
		 	&=PD(\epsilon_{1}m_{1}\alpha_{1}).
		  \end{align*}
		  Both $\mathcal{F}_{0}|_{\mathbb{T}\times\{1/2\}}$ and
        $\mathcal{F}_{1}| _{\mathbb{T}\times\{1/2\}}$ consists of simple closed curves of slope $[\alpha]$, so after an isotopy we can suppose that $\mathcal{F}_{0}|_{\mathbb{T}\times\{1/2\}}=\mathcal{F}_{1}|_{\mathbb{T}\times\{1/2\}}$. Moreover, the orientations of $\mathcal{F}_{0}$ and $\mathcal{F}_{1}$ are compatible along $\mathbb{T}\times\{1/2\}$. So $\mathcal{F}_{0}$ and $\mathcal{F}_{1}$ together form a taut foliation $\mathcal{F}$ on $(M,\gamma)$. It follows from the gluing formula (Lemma \ref{Gluing_formula}) and the restriction formula (Lemma \ref{Restriction_formula}) for relative Euler classes  that
		  \[
            e(\mathcal{F},\partial)=PD(\epsilon_{0}m_{0}\alpha_{0}+\epsilon_{1}m_{1}\alpha_{1}).
        \]
	\end{proof}
	
	\begin{Lem}\label{Foliation_On_sutured_solid_torus}
		Let $(M,\gamma)$ be a sutured solid torus. Let $\mu$ be a meridian of $\partial M$ and let $\lambda$ be a longitude of $\partial M$. If $s(\gamma)$ consists of $2m$ sutures each of which goes $p$ times $(p>0)$ around $\lambda$ and $q$ times around $\mu$, then there are taut foliations $\mathcal{F}^{\pm}$ on $(M,\gamma)$ such that
		\[
            e(\mathcal{F}^{\pm},\partial)=\pm(mp-1)PD[\lambda].
        \]
	\end{Lem}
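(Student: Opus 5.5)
The sutured solid torus is a half sutured torus shell with a solid torus glued in along its torus boundary component, so I will reduce to Lemma \ref{Euler_class_for_standard_model}. Concretely, choose a smaller solid torus $V\subset\mathrm{int}(M)$, a regular neighbourhood of the core, so that $M\backslash\mathrm{int}(V)\cong T\times[0,1]$. Identify $(M\backslash\mathrm{int}(V),\gamma\cup\partial V)$ with $(N_{m,p,q},\gamma_{m,p,q})$ via a diffeomorphism carrying $\lambda,\mu$ to the curves of the same names. The inner boundary $\partial\mathbb{D}\times\mathbb{S}^{1}$ then corresponds to $\partial V$, and the sutures of slope $q[\mu]+p[\lambda]$ correspond to $s(\gamma)$. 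Transport the foliations $\mathcal{F}^{\pm}_{m,p,q}$ to this shell.

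Fill $V$ by a foliation $\mathcal{G}$ that agrees with the torus leaf $\partial V$ on the boundary. The natural choice is a Reeb-type filling, but a Reeb component would destroy tautness. So instead I would use the explicit model from the definition of $\tilde{\mathcal{F}}^{\pm}$ with the hole $\mathbb{D}$ filled in. Take the same formula $h=\pm\phi(\mathrm{Im}(e^{-mqh\sqrt{-1}}z^{mp}))+\theta$ on all of $[-2,2]^{2}\times\mathbb{R}$ pulled back by the branched map $z\mapsto z^{mp}$ twisted by $e^{-mqh\sqrt{-1}}$. Since $\phi$ vanishes near $[-1,1]$, near $z=0$ the leaves are the horizontal disks $h=\mathrm{const}$. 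Thus the branching at the core is harmless and the foliation is smooth there: the interior leaves restricted to $V$ are just meridian disks $\{h=\theta\}$. This gives the familiar stack of saddles ($mp$-pronged, with $2m$ sutures), which is taut, since the core circle is a closed transversal meeting every leaf. Each leaf also has a transverse arc from $R_{-}$ to $R_{+}$.

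For the Euler class, I would compute directly rather than via gluing, since $\partial V$ is no longer a leaf. The same section $s$, the normalized pull-back of $\partial_{x}$, fails to extend over the core. Instead, work with the trivialization $V'=\mathrm{span}(\partial_{x},\partial_{y})$ on the meridian disk, along which the foliation is horizontal. Use the difference formula (Lemma \ref{Difference_formula}) between the canonical section $s_{0}$ and a global section $\sigma$ of $T\mathcal{F}^{\pm}$ on $M$ that is horizontal-constant (for example the pull-back of $\partial_{x}$ in the unbranched $z$-coordinates near the core, interpolated to the boundary). Then $e(\mathcal{F}^{\pm},\partial)=d([s_{0}]-[\sigma|_{\partial M}])$. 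Here $d\colon H^{1}(\partial M)\to H^{2}(M,\partial M)\cong\mathbb{Z}$, generated by $PD[\lambda]$, evaluates the class on the meridian $\mu$ only.

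Evaluate on the meridian disk $D$ with $\partial D=\mu$. The pairing $\langle e(\mathcal{F}^{\pm},\partial),[D]\rangle$ is the obstruction to extending $s_{0}|_{\mu}$ over $D$ inside $T\mathcal{F}^{\pm}|_{D}$. By the computation in Lemma \ref{Euler_class_for_standard_model}, the map from $T\mathcal{F}|_{\mu}$ to $V$ twists with degree $(1-mp)$ relative to the radial direction on $\mu$, with sign $\mp$ according to whether the leaf orientation is $V$ or $\bar V$. Alternatively, there is an index count: $s_{0}$ is normal along the sutured annuli and transverse to the product fibres on $R_{\pm}$, and by Poincaré--Hopf on $D$ this gives a single $2mp$-pronged saddle singularity of index $1-mp$. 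Both routes yield $\langle e(\mathcal{F}^{\pm},\partial),[D]\rangle=\pm(mp-1)$ up to a global sign. Since $\langle PD[\lambda],[D]\rangle=\pm1$, this gives $e(\mathcal{F}^{\pm},\partial)=\pm(mp-1)PD[\lambda]$, with the two signs realized by $\mathcal{F}^{+}$ and $\mathcal{F}^{-}$.

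The main obstacle is this sign and degree bookkeeping. I have to confirm that the canonical section's winding, relative to the horizontal frame on $D$, is exactly $1-mp$ rather than $-mp$. I would check it against the case $m=p=1$, where $(M,\gamma)$ is the product sutured solid torus foliated by disks and the relative Euler class must be $0$. This fixes the constant and the orientation conventions.
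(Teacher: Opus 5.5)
Your foliation ends up being the paper's: the shell $N_{m,p,q}$ with the solid torus $\mathbb{D}\times\mathbb{S}^{1}$ filled by the product foliation $\mathcal{P}^{\pm}$ by meridian disks. Tautness via the core as a closed transversal is fine. The detour through a ``torus leaf $\partial V$'' and a Reeb filling is a misreading. $\partial V$ is $T=T(\gamma_{m,p,q})$, a toroidal suture \emph{transverse} to $\mathcal{F}^{\pm}_{m,p,q}$ and meeting the leaves in the horizontal circles $h=\mathrm{const}$, so the disk filling is forced.

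The genuine gap is the Euler class, which is the whole content of the lemma. You leave it undecided between $\pm(mp-1)$ and $\pm mp$. Your first route misquotes Lemma \ref{Euler_class_for_standard_model}. There, $s|_{T}=e^{((1-mp)\theta_{1}+mq\theta_{2})\sqrt{-1}}$ is measured against the \emph{constant} frame $\partial_{x}$ of $V=\underline{\mathbb{C}}$. So along $\mu$ it has degree $1-mp$ relative to $\partial_{x}$, but $-mp$ relative to the radial section $e^{\theta_{1}\sqrt{-1}}$. The missing $1$ is exactly the contribution of the filling. Your calibration at $m=p=1$ cannot supply it, for two reasons:
\begin{itemize}
\item You give no independent reason why the disk-stack foliation there has vanishing class. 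It is not the product foliation by annuli, and Lemma \ref{Adjunction_inequality} requires $A(\gamma)=\emptyset$.
\item One data point fixes a formula only once its dependence on $mp$ is already known.
\end{itemize}
Likewise, ``interpolating'' a global section $\sigma$ out to $\partial M$ just relocates the same unknown winding.

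Your reason for avoiding the gluing formula is also mistaken. Lemma \ref{Gluing_formula} only needs a section over $T$, not that $T$ be a leaf, and the radial section $s_{0}|_{T}$ is tangent to the horizontal leaves there. That is the paper's proof:
\begin{itemize}
\item The shell contributes $\pm PD(mq[\mu]+mp[\lambda])=\pm mp\,PD[\lambda]$ by Lemma \ref{Euler_class_for_standard_model}, since $[\mu]=0$ in $M$.
\item The filled solid torus contributes $e(\mathcal{P}^{\pm},s_{0}|_{T})=\mp PD[\lambda]$, because the radial section has degree $1$ along $\mu$ relative to the global section $\partial_{x}$.
\item The sum is $\pm(mp-1)PD[\lambda]$.
\end{itemize}
Alternatively, your index count becomes a proof once made precise. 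The normalized pull-back $s$ of $\partial_{x}$ is canonical on $\partial M$, as checked in the proof of Lemma \ref{Euler_class_for_standard_model}. It extends over the filling minus the core by the same formula, a positive multiple of $e^{mqh\sqrt{-1}}z^{1-mp}$ inside the horizontal leaves. Its only singular point, on the core, has index $1-mp$ in the complex orientation. Hence it has index $\pm(mp-1)$ in the orientation of $T\mathcal{F}^{\pm}$, which is $\bar V$ for $+$ and $V$ for $-$.
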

	
	\begin{proof}
		Up to isomorphism, we can suppose that $M=N_{m,p,q}\cup_{T}(\mathbb{D}\times\mathbb{S}^{1})$ and $\gamma=\gamma_{m,p,q}\backslash T$. 
        Let $\mathcal{P}^{\pm}$ denote the product foliation on $\mathbb{D}\times\mathbb{S}^{1}$ foliated by $\mathbb{D}\times p \ (p\in\mathbb{S}^{1})$, whose orientation is given by $\mp \mathrm{d} x\wedge \mathrm{d} y$. Then $\mathcal{P}^{\pm}|_{T}=\mathcal{F}_{m,p,q}^{\pm}|_{T}$ and their orientations are compatible along $T$. So $\mathcal{P}^{\pm}$ and $\mathcal{F}_{m,p,q}^{\pm}$ together form a taut foliation $\mathcal{F}^{\pm}$ on $(M,\gamma)$.
        
        Let $s_{0}$ be a canonical section of $T\mathcal{F}_{m,p,q}^{+}|_{\partial N_{m,p,q}}$ with $s_{0}|_{(e^{\theta_{1}\sqrt{-1}},\theta_{2})}=e^{\theta_{1}\sqrt{-1}}$. Since $\partial_{x}$ gives a unit section of $T\mathcal{P}^{\pm}$, we have $e(T\mathcal{P}^{\pm},\partial_{x}|_{T})=0$.
		So
		\[
            e(\mathcal{P}^{\pm},s_{0}|_{T})=d([s_{0}]_{T\mathcal{F}^{\pm}}-[\partial_{x}]_{T\mathcal{F}^{\pm}})=\mp d([s_{0}]_{\underline{\mathbb{C}}}-[\partial_{x}]_{\underline{\mathbb{C}}}),
        \]
		where $d\colon H^{1}(T)\rightarrow H^{2}(\mathbb{D}\times\mathbb{S}^{1})$ is the coboundary map for the pair $(\mathbb{D}\times\mathbb{S}^{1},T)$. $[s_{0}]_{\underline{\mathbb{C}}}-[\partial_{x}]_{\underline{\mathbb{C}}}\in H^{1}(T)$ is represented by the map $f\colon T\rightarrow\mathbb{S}^{1}$ given by $f(e^{\theta_{1}\sqrt{-1}},\theta_{2})=\theta_{1}$. Let $D$ be the meridian disk $\mathbb{D}\times 0$ with the complex orientation. We have
		\[
            \langle d([f]),[D]\rangle=\langle[f],[\partial D]\rangle=\langle[f],[\mu]\rangle=\deg(f|_{\mu})=1=\langle PD([\lambda]),[D]\rangle.
        \]
		Since $H_{1}(\mathbb{D}\times\mathbb{S}^{1})$ is torsion-free and $H_{2}(\mathbb{D}\times\mathbb{S}^{1},\partial\mathbb{D}\times\mathbb{S}^{1})\cong\mathbb{Z}$ is generated by $[D]$
		It follows that $d([f])=PD([\lambda])$. So we have
		\[
            e(T\mathcal{P}^{\pm},s_{0}|_{T})=\mp d([f])=\mp PD[\lambda].
        \]
		By Lemma \ref{Euler_class_for_standard_model}, we have
		\[
            e(T\mathcal{F}_{m,p,q}^{\pm},s_{0})=\pm PD(mq[\mu]+mp[\lambda])=\pm mp\cdot PD[\lambda].
        \]
		It follows from Lemma \ref{Gluing_formula}, Lemma \ref{Restriction_formula} that
		\[
            e(\mathcal{F}^{\pm},\partial)=\mp PD[\lambda]\pm (mp\cdot PD[\lambda])=\pm(mp-1)PD([\lambda]).
        \]
	\end{proof}
	
	\section{Proof of the main theorems}\label{non-geometric_case}
	
	In this section, we firstly prove Theorem \ref{graph_taut_foliation_distribution}, which determines the set of Euler classes of all taut foliations for graph manifolds whose JSJ pieces are products of $\mathbb{S}^{1}$ and positive genus compact surfaces. Then we use Theorem \ref{graph_taut_foliation_distribution} to prove the remaining theorems.
	
	\begin{Lem}\label{Contrain_on_product}
		Let $(M,\gamma)$ be a sutured manifold such that $M=\Sigma\times\mathbb{S}^{1}$ and $A(\gamma)=\emptyset$, where $\Sigma$ is a connected compact oriented surface.
		let $l$ be an oriented $\mathbb{S}^{1}$-fiber of $M$.
		Let $\mathcal{F}$ be a Reebless foliation on $(M,\gamma)$ compatible with the sutured structure.
		Then $e(\mathcal{F},\partial)=a\cdot PD[l]$ for some integer $a$ with $|a|\leq|\chi(\Sigma)|$ and $a\equiv\chi(\Sigma) \mod 2$.
	\end{Lem}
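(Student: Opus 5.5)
The plan is to determine $e(\mathcal{F},\partial)\in H^{2}(M,\partial M)$ by evaluating it on a basis of $H_{2}(M,\partial M)$ and then applying Lemma \ref{Adjunction_inequality}. Since $A(\gamma)=\emptyset$, every boundary torus of $M$ is either a leaf (in $R(\gamma)$) or a component of $T(\gamma)$, so Lemma \ref{Adjunction_inequality} applies. By Lefschetz duality, $H^{2}(M,\partial M)\cong H_{1}(M)\cong H_{1}(\Sigma)\oplus\mathbb{Z}[l]$, and this group is torsion-free. So the class is determined by its pairings with $H_{2}(M,\partial M)\cong H_{1}(\Sigma,\partial\Sigma)\oplus H_{2}(\Sigma,\partial\Sigma)$, where the first summand is generated by vertical surfaces $c\times\mathbb{S}^{1}$.

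The first step is to show that $e(\mathcal{F},\partial)$ vanishes on every vertical class. Take a properly embedded oriented arc or closed curve $c\subset\Sigma$ that is essential; when $\partial\Sigma\neq\emptyset$ we may take arcs together with closed curves to span $H_{1}(\Sigma,\partial\Sigma)$. Then $S=c\times\mathbb{S}^{1}$ is a torus or an annulus, its boundary consists of essential curves (copies of the fiber) in $\partial M$, and it has no sphere or disk components. Thurston's inequality in Lemma \ref{Adjunction_inequality}(2) then gives
\[
|\langle e(\mathcal{F},\partial),[S]\rangle|\le -\chi(S)=0.
\]
Hence $e(\mathcal{F},\partial)$ annihilates the whole vertical summand. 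Consequently, under the duality, $e(\mathcal{F},\partial)$ is a multiple of $PD[l]$, say $a\cdot PD[l]$: indeed, the pairing of $PD(x)$ with $c\times\mathbb{S}^{1}$ is the intersection number of $x$ with that surface, and these pairings vanish for all vertical surfaces exactly when the $H_{1}(\Sigma)$-component of $x$ is zero.

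The second step is to evaluate on the horizontal surface $\Sigma\times\{pt\}$. Its boundary curves are essential in the boundary tori, and it has no disk or sphere components as long as $\Sigma$ is not a disk or sphere. Since $\langle PD[l],[\Sigma]\rangle=\pm1$ (fix orientations so that it equals $1$), we get $\langle e(\mathcal{F},\partial),[\Sigma]\rangle=a$. Lemma \ref{Adjunction_inequality}(1) then gives $a\equiv\chi(\Sigma)\pmod 2$, and part (2) gives $|a|\le-\chi(\Sigma)=|\chi(\Sigma)|$.

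The main obstacle is the degenerate cases. If $\Sigma$ is a disk, annulus, or sphere, the horizontal surface is excluded from part (2), or $M$ fails to have the required form. For the disk case, the boundary of $M$ is a single torus on which the meridian bounds a disk; I would argue via the Reeb-free hypothesis and the parity condition that $|a|\le 1$ still holds, since the disk is a transverse-capped surface. Alternatively, these cases may be implicitly excluded by the setting, since the paper only applies the lemma to JSJ pieces of positive genus. I would note that in the annulus and torus cases, $\chi=0$ and the vertical argument already covers everything, because there $\Sigma\times pt$ is itself an annulus or torus and part (2) forces $a=0$. The remaining care is in checking the orientation convention relating $\langle PD[l],[\Sigma]\rangle$ to the sign, which does not affect the stated bounds.
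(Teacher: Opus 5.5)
Your proposal is correct and follows essentially the same route as the paper. Thurston's inequality on the vertical tori and annuli $c\times\mathbb{S}^{1}$ kills the $H_{1}(\Sigma)$-component, forcing $e(\mathcal{F},\partial)=a\cdot PD[l]$, and then Lemma~\ref{Adjunction_inequality} applied to $\Sigma\times\{\mathrm{pt}\}$ gives both the bound and the parity. Your appeal to the torsion-freeness of $H^{2}(M,\partial M)\cong H_{1}(M)$ and to the perfect intersection pairing on $\Sigma$ makes explicit a step the paper leaves implicit.

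The disk and sphere cases fall outside Lemma~\ref{Adjunction_inequality}(2) in your argument and in the paper's. Your disk sketch is not yet a proof. Also, $\mathbb{S}^{2}\times\mathbb{S}^{1}$ does have the stated form, contrary to your remark, and its bound would need Reeb stability. These cases are irrelevant, though, since the lemma is only ever applied to positive-genus $\Sigma_{i}$.
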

	
	\begin{proof}
		By K\"unneth formula, $H_{2}(M,\partial M)=H_{2}(\Sigma,\partial\Sigma)\oplus (H_{1}(\Sigma,\partial\Sigma)\otimes H_{1}(\mathbb{S}^{1}))$, Where $H_{2}(\Sigma,\partial\Sigma)\cong\mathbb{Z}$ is generated by $PD[l]$. Each class in $H_{1}(\Sigma)\otimes H_{1}(\mathbb{S}^{1})$ can be represented a properly embedded surface $S$ which is a union of essential tori and essential annuli. Applying Lemma \ref{Adjunction_inequality}, we have
		\[|\langle e(\mathcal{F},\partial),[S]\rangle|\leq-\chi(S)=0.\]
		So $e(\mathcal{F},\partial)=a\cdot PD[l]$, where  $a=\langle e(\mathcal{F},\partial),[\Sigma]\rangle\in\mathbb{Z}$.
		Applying Lemma \ref{Adjunction_inequality} again, we have
		$|a|\leq |\chi(\Sigma)|$ and $|a|\equiv\chi(\Sigma)\mod 2$.
	\end{proof}
	
	\begin{Thm}\label{graph_taut_foliation_distribution}
		Let $Y$ be a closed graph manifold such that each JSJ piece is a product of $\mathbb{S}^{1}$ and a positive genus compact surface. Let $N_{i}=\Sigma_{i}\times\mathbb{S}^{1}\ (1\leq i\leq n)$ be the JSJ pieces of $Y$. For each $i$, let $\chi_{i}=\chi(\Sigma_{i})$ and $l_{i}$ be an oriented $\mathbb{S}^{1}$-fiber of $N_{i}$. Then the set of Euler classes of all taut foliations on $Y$ is given by
		\[ \mathscr{F}(Y)=\left\{\sum\limits_{i=1}^{n}a_{i}\cdot PD[l_{i}]\colon|a_{i}|\leq -\chi_{i},\ a_{i}\equiv\chi_{i} \mod 2\right\}. \]
	\end{Thm}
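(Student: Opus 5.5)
The proof has two inclusions: every taut foliation has Euler class in $\mathscr{F}(Y)$, and every class in $\mathscr{F}(Y)$ is realized by one.

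\emph{The inclusion of Euler classes into $\mathscr{F}(Y)$.} Let $\mathcal{F}$ be a taut foliation on $Y$, hence Reebless. I would first use the results of \cite{BNR97} and \cite{BR99} to isotope $\mathcal{F}$ so that each JSJ torus $T$ either is a leaf or is transverse to $\mathcal{F}$ with $\mathcal{F}|_{T}$ containing no Reeb annuli. Cutting along the JSJ tori then gives, on each piece $N_{i}$, a Reebless foliation compatible with a sutured structure with $A(\gamma)=\emptyset$. Lemma \ref{Contrain_on_product} gives $e(\mathcal{F}|_{N_{i}},\partial)=a_{i}PD[l_{i}]$ with $|a_{i}|\le -\chi_{i}$ and $a_{i}\equiv\chi_{i}\bmod 2$. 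On each JSJ torus the canonical sections coming from the two sides agree up to homotopy: a Lie group section on a torus leaf, or a section normal to $T$ on a transverse torus. So the gluing formula (Lemma \ref{Gluing_formula}), applied with $Y'=\bigcup T$ and composed with the natural map $H^{2}(Y,\bigcup T)\to H^{2}(Y)$, gives $e(\mathcal{F})=\sum a_{i}PD[l_{i}]$. Here $PD[l_{i}]$ in $H^{2}(N_{i},\partial N_{i})$ pushes forward to $PD[l_{i}]\in H^{2}(Y)$. The step I would check most carefully is that the canonical sections on the two sides of a transverse torus have compatible homotopy classes.

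\emph{Realization.} Given $a_{i}$ satisfying the constraints, I would put a standard lamination $\lambda_{i}$ on each $\Sigma_{i}$ (Section \ref{Standard_lamination_on_surface}) and set $\Lambda=\bigsqcup\lambda_{i}\times\mathbb{S}^{1}$. The boundary leaves of $\lambda_{i}$ should hit $\partial\Sigma_{i}$ in a way that matches across the gluing, which I would arrange by isotopy. The complement of $\Lambda$ decomposes as follows.
\begin{itemize}
\item Each $4$-gon times $\mathbb{S}^{1}$ is a sutured solid torus with $m=2$, $p=1$. By Lemma \ref{Foliation_On_sutured_solid_torus} it contributes $\pm PD[l_{i}]$ relative to the fiber $\lambda=l_{i}$.
\item The holed $2$-gons times $\mathbb{S}^{1}$ on the two sides of a JSJ torus $T$ glue to a sutured torus shell with $m_{0}=m_{1}=1$. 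The suture slopes are the two fibers $l_{i},l_{j}$, which are distinct because $T$ is a JSJ torus. Lemma \ref{Foliation_on_sutured_torus_shell} gives independent signs, contributing $\epsilon PD[l_{i}]+\epsilon' PD[l_{j}]$.
\end{itemize}

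Each piece $N_{i}$ with genus $g_{i}$ and $k_{i}$ boundary components then receives $2(g_{i}-1)+k_{i}=-\chi_{i}$ independent contributions of $\pm1$. Choosing the signs appropriately produces any $a_{i}$ with $|a_{i}|\le-\chi_{i}$ and $a_{i}\equiv\chi_{i}\bmod 2$.

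The sign choices must also be consistent with the orientation of $\Lambda$: the $R_{\pm}$ structure of each region is forced by the leaf orientations. I expect this to be fine, since the lemmas allow both signs for any fixed suture data. To combine the pieces, I would glue the region foliations to $\Lambda$, giving a foliation whose leaves are the leaves of $\Lambda$ and the region leaves. For tautness I would argue that there are no torus or annulus leaves, since the regions have no compact leaves and the leaves of $\Lambda$ are noncompact, or alternatively produce closed transversals through the fibers.

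\emph{Computing the Euler class.} The main obstacle is computing $e(\mathcal{F})$ for the glued foliation. I would realize $\Lambda$ as a finite union of thickened leaves via a branched surface, or compute with relative classes on the complementary regions. On the $I$-bundle part over $\Lambda$ the section tangent to the leaves of $\lambda_{i}$ extends, so that part contributes zero. Gluing formula and restriction formula then sum the region contributions, provided the canonical sections on the region boundaries agree with the section along $\lambda_{i}$, up to homotopy, on the interfaces. That agreement is the bookkeeping I would verify most carefully.
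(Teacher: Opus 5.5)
Your proposal is correct and is essentially the paper's proof. The converse goes through \cite{BR99} and \cite[Proposition 2]{BNR97} (a Reeb annulus on a transverse JSJ torus would have fiber slope on both sides, contradicting JSJ minimality), then Lemma \ref{Contrain_on_product} and the gluing formula. The realization uses $\Lambda=\bigsqcup_i\lambda_i\times\mathbb{S}^{1}$ with sutured solid tori ($m=2$, $p=1$) and sutured torus shells ($m_0=m_1=1$), and computes the Euler class from the section $T\mathcal{F}\cap T\mathcal{P}_i$ on $Y\setminus\mathrm{int}(M)$. That section is your section tangent to $\lambda_i$, extended over the cusp product regions, and it is canonical on $\partial M$.

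One harmless slip: $\lambda_i$ lies in the interior of $\Sigma_i$, with each boundary circle sitting inside a holed $2$-gon. So no matching of $\lambda_i$ across the JSJ tori is needed.
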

	
	\begin{proof}
		Suppose that $\Sigma_{i}$ has genus $g_{i}$ with $k_{i}$ boundaries. Let $T_{u}\ (u=1,2,\ldots,m)$ be the JSJ tori of $Y$. Suppose that $T_{u}$ connects the JSJ pieces $N_{o(u)}$ and $N_{t(u)}$. Let $\lambda_{i}$ be a standard lamination on $\Sigma_{i}$. Then $\Lambda=\bigsqcup\limits_{i=1}^{n}\lambda_{i}\times\mathbb{S}^{1}$ is an oriented essential lamination on $Y$.
		
		Let $C_{ij}\ (1\leq i\leq n,\ 1\leq j\leq 2(g_{i}-1))$ and $C_{u}\ (1\leq u\leq m)$ be the components of the complementary region of $\Lambda$. Among these components, for each $1\leq i\leq n$, $C_{ij}\ (1\leq j\leq 2(g_{i}-1))$ are contained in $N_{i}$ and are products of $\mathbb{S}^{1}$ and $4$-gons; for each $1\leq u\leq m$, $C_{u}$ contains the JSJ torus $T_{u}$ and is given by gluing two products of $\mathbb{S}^{1}$ and one-holed $2$-gons along their common boundary $T_{u}$.
		
		Each $C_{ij}$ can be cut along $4$ vertical annuli into a sutured solid torus $(M_{ij},\gamma_{ij})$ and product regions $\mathcal{I}_{ij}\cong\mathbb{R}_{\geq 0}\times A(\gamma_{ij})$. $(M_{ij},\gamma_{ij})$ has $4$ longitudinal sutures. Let $l_{ij}$ be a core curve of $M_{ij}$. Orient $l_{ij}$ such that $[l_{ij}]=[l_{i}]\in H_{1}(N_{i})$. By Lemma \ref{Foliation_On_sutured_solid_torus}, for each $\epsilon_{ij}\in\{\pm1\}$, there is a taut foliation $\mathcal{F}_{ij}$ on $(M_{ij},\gamma_{ij})$ with
        \[
            e(\mathcal{F}_{ij},\partial)=\epsilon_{ij}[l_{ij}].
        \]
		
		Each $C_{u}$ can be cut along $4$ vertical annuli into a sutured torus shell $(M_{u},\gamma_{u})$ and product regions $\mathcal{I}_{u}\cong\mathbb{R}_{\geq 0}\times A(\gamma_{u})$. Let $l_{u}^{o}$ and $l_{u}^{t}$ be oriented simple closed curves in $T_{u}$ such that $[l_{u}^{o}]=[l_{o(u)}]\in H_{1}(N_{o(u)})$ and $[l_{u}^{t}]=[l_{t(u)}]\in H_{1}(N_{t(u)})$. Then $s(\gamma_{u})$ consists of two sutures of slope $[l_{u}^{o}]$ on one boundary and two sutures of slope $[l_{u}^{t}]$ on the other boundary. By Lemma \ref{Foliation_on_sutured_torus_shell}, for each $\epsilon_{u}^{o},\epsilon_{u}^{t}\in\{\pm1\}$, there is a taut foliation $\mathcal{F}_{u}$ on $(\mathcal{F}_{u},\gamma_{u})$ with
        \[
            e(\mathcal{F}_{u},\partial)=\epsilon_{u}^{o}PD[l_{u}^{o}]+\epsilon_{u}^{t}PD[l_{u}^{t}].
        \] 

        Denote the union of $(M_{ij},\gamma_{ij})$ and $(M_{u},\gamma_{u})$ by $(M,\gamma)$, the union of $\mathcal{F}_{ij}$ and $\mathcal{F}_{u}$ by $\mathcal{F}_{M}$, and the union of $\mathcal{I}_{ij}$ and $\mathcal{I}_{u}$ by $\mathcal{I}$. Then $\mathcal{F}_{M}$ meets $A(\gamma)$ in a $1$-dimensional foliation $f$. Any identification $\mathcal{I}\cong\mathbb{R}_{\geq 0}\times A(\gamma)$ produces a $2$-dimensional foliation $\mathcal{F}_{\mathcal{I}}=\mathbb{R}_{\geq0}\times f$ on $\mathcal{I}$. Then $\mathcal{F}_{M},\mathcal{F}_{\mathcal{I}}$ and $\Lambda$ together form an oriented foliation $\mathcal{F}$ on $Y$. $\mathcal{F}$ is taut since it has no torus leaves.
        
        \begin{Claim}
            By selecting an appropriate identification, we can ensure that $\mathcal{F}$ is $C^{\infty,0}$.
        \end{Claim}

        This fact is intuitively obvious but is not readily justified in a few words. We include a proof here for the sake of completeness.

        \begin{proof}[Proof of Claim]
            It suffices to ensure that $T\mathcal{F}_{\mathcal{I}}$ can be extended to a $C^{0}$-plane field over the closure of $\mathcal{I}$ in $Y$.

            Denote by $\theta\in\mathbb{R}/2\pi\mathbb{Z}$ the coordinate of $\mathbb{S}^{1}$. Choose an arbitrary Riemannian metric on each $\Sigma_{i}$. These metrics induce metrics on the projectivized tangent bundles $PT\Sigma_{i}=\cup_{p\in\Sigma_{i}}\mathbb{P}(T_{p}\Sigma_{i})$, a product metric on $Y\backslash M$, and further a metric on the Grassmann bundle $\mathrm{Gr}_{2}(T(Y\backslash M))=\cup_{p\in Y\backslash M}\mathrm{Gr}_{2}(T_{p}Y)$.
    
            For each component $\mathcal{I}_{c}$ of $\mathcal{I}$, $\mathcal{I}_{c}$ takes the form $c\times\mathbb{S}^{1}$, where $c$ is a geometric cusp of some complementary region of $\lambda_{i}\subset\Sigma_{i}$. Then $A(\gamma_{c})=(c\times\mathbb{S}^{1})\cap M$ is a component of $A(\gamma)$, and $\mathcal{F}_{M}$ meets $A(\gamma_{c})$ in a $1$-dimensional foliation $f_{c}$.
            Pick an identification $c\cong\mathbb{R}_{\geq0}\times [0,1]=\{(u,v)\colon u\in\mathbb{R}_{\geq0},v\in[0,1]\}$ such that
            \begin{enumerate}
                \item $A(\gamma_{c})=0\times[0,1]\times\mathbb{S}^{1}$.
                \item $\lim\limits_{a\rightarrow+\infty}\max\{\|\partial_{v}|_{(a,b)}\|\colon b\in[0,1]\}=0$.
                \item $\lim\limits_{a\rightarrow+\infty}\mathrm{diam}_{PT\Sigma_{i}}(\{[\mathbb{R}\cdot\partial_{u}|_{(a,b)}]\colon b\in [0,1]\})=0$.
            \end{enumerate} 
            Then we obtain an identification $\mathcal{I}_{c}\cong \mathbb{R}_{\geq0}\times[0,1]\times\mathbb{S}^{1}\cong\mathbb{R}_{\geq 0}\times A(\gamma_{c})$. Let $\mathcal{F}_{c}=\mathbb{R}_{\geq 0}\times f_{c}$ be the corresponding $2$-dimensional foliation on $\mathcal{I}_{c}$.
    
            Express $Tf_{c}$ as $Tf_{c}|_{(0,b,h)}=\mathrm{Span}_{\mathbb{R}}\{(\partial_{\theta}+\psi(b,h)\partial_{v})|_{(0,b,h)}\}$, where $\psi\in C^{\infty}([0,1]\times\mathbb{S}^{1})$. Then we have $T\mathcal{F}_{c}|_{(a,b,h)}=\mathrm{Span}_{\mathbb{R}}\{\partial_{u}|_{(a,b,h)},(\partial_{\theta}+\psi(b,h)\partial_{v})|_{(a,b,h)}\}$. Let $V$ be the plane field over $\mathcal{I}_{c}$ spanned by the vector fields $\partial_{v}$ and $\partial_{\theta}$. Then
            \[
                 d_{\mathrm{Gr}_{2}(T(Y\backslash M))}([T\mathcal{F}_{c}|_{(a,b,h)}],[V|_{(a,b,h)}]) \leq |\psi(b,h)|\ \|\partial_{v}|_{(a,b)}\|.
            \]
            It follows from condition (2) that the distance between $[T\mathcal{F}_{c}|_{p}]$ and $[V_{p}]$ tends to zero as the $u$-coordinate of $p$ tends to infinity. Moreover, condition (3) guarantees that $V$ can be extended to a $C^{0}$-plane field over the closure of $\mathcal{I}_{c}$ in $Y$. We conclude that $T\mathcal{F}_{c}$ can be extended to a $C^{0}$-plane field over the closure of $\mathcal{I}_{c}$ in $Y$.
        \end{proof}
        
	    It turns out that $\mathcal{F}$ is an oriented $C^{\infty,0}$-taut foliation on $Y$. Next, calculate the Euler class of $\mathcal{F}$. Let $\mathcal{P}_{i}$ be the product foliation on $N_{i}$ foliated by $\Sigma_{i}\times p\ (p\in\mathbb{S}^{1})$.
	    Our choice of identification $\mathcal{I}\cong\mathbb{R}_{\geq 0}\times A(\gamma)$ can ensure that $T\mathcal{F}$ is transverse to  $T\mathcal{P}_{i}$ on $N_{i}$ outside $(M,\gamma)$. Then the intersection of $T\mathcal{F}$ with $T\mathcal{P}_{i}\ (1\leq i\leq n)$ yields a unit section $s$ of $T\mathcal{F}|_{Y-\mathrm{int}(M)}$.
	    It is straightforward to check that $s|_{\partial M}$ is a canonical trivialization of $T\mathcal{F}|_{\partial M}$. It follows from Lemma \ref{Gluing_formula} and Lemma \ref{Restriction_formula} that
		
		\begin{align*}
			e(\mathcal{F})=&\sum\limits_{i=1}^{n}\sum\limits_{j=1}^{2(g_{i}-1)}e(\mathcal{F}_{ij},\partial)+\sum\limits_{u=1}^{m}e(\mathcal{F}_{u},\partial) \\
			=&\sum\limits_{i=1}^{n}\sum\limits_{j=1}^{2(g_{i}-1)}\epsilon_{ij}PD[l_{ij}]+\sum\limits_{u=1}^{m}(\epsilon_{u}^{o}PD[l_{u}^{o}]+\epsilon_{u}^{t}PD[l_{u}^{t}])\\
			=&\sum\limits_{i=1}^{n}\left(\sum\limits_{j=1}^{2(g_{i}-1)}\epsilon_{ij}+\sum\limits_{o(u)=i}\epsilon_{u}^{o}+\sum\limits_{t(u)=i}\epsilon_{u}^{t}\right)PD[l_{i}].
		\end{align*}
        
		Since $ \sum\limits_{j=1}^{2(g_{i}-1)}1+\sum\limits_{o(u)=i}1+\sum\limits_{t(u)=i}1=2(g_{i}-1)+k_{i}=-\chi_{i} $, and $\epsilon_{ij},\epsilon_{u}^{o},\epsilon_{u}^{t}\in\{\pm1\}$ can be taken arbitrarily, every class in $\left\{\sum\limits_{i=1}^{n}a_{i}\cdot PD[l_{i}]\colon a_{i}\equiv\chi_{i}\ \mathrm{mod}\ 2,\ |a_{i}|\leq -\chi_{i}\right\}$ can be realized.
		
		Conversely, let $\mathcal{F}$ be a taut foliation on $Y$.
		By the main theorem of \cite{BR99}, possibly after blowing up along finite number of leaves, which does not change the Euler class or effect tautness, $\mathcal{F}$ can be isotoped such that each JSJ torus is either a leaf of $\mathcal{F}$ or is transverse to $\mathcal{F}$.
        
        Denote by $\mathcal{F}_{i}$ the restriction of $\mathcal{F}$ to $N_{i}$. Then $\mathcal{F}_{i}$ is Reebless. Besides, $\mathcal{F}_{i}$ specifies a sutured structure $\delta_{i}$ on $N_{i}$ as follows: $A(\delta_{i})=\emptyset$, $T(\delta_{i})=\partial_{\pitchfork}Y$, $R_{+}(\delta_{i})$ (resp. $R_{-}(\delta_{i})$) are union of boundary leaves of $\mathcal{F}_{i}$ whose normal vectors (with respect to the leaf orientations) point out of (resp. into) $N_{i}$.
        
		Suppose that $\mathcal{F}|_{T_{u}}$ contains Reeb annuli for some $u$. Then the slope of $\mathcal{F}|_{T_{u}}$ is vertical with respect to both $N_{o(u)}$ and $N_{t(u)}$ by \cite[Proposition 2]{BNR97} (see also \cite{Bri99}), contradicting to the minimality of JSJ decomposition.
		So $\mathcal{F}|_{T_{u}}$ has no Reeb annuli.
        Thus, $\mathcal{F}_{i}$ is compatible with the sutured structure of $(N_{i},\delta_{i})$. 
		By Lemma \ref{Contrain_on_product}, $e(\mathcal{F}_{i},\partial)=a_{i}\cdot PD[l_{i}]$ for some integer $a_{i}$ with $|a_{i}|\leq |\chi(\Sigma_{i})|=-\chi_{i}$ and $a_{i}\equiv \chi_{i}\mod 2$. 
        It follows that
        \[
            e(\mathcal{F})=\sum\limits_{i=1}^{n}e(\mathcal{F}_{i},\partial)=\sum\limits_{i=1}^{n}a_{i}\cdot PD[l_{i}].
        \]
		This completes the proof.
	\end{proof}
	
	Before the proof of Theorem \ref{Main_Thm}, we need one more lemma, which implies that any rational point in the dual Thurston norm unit ball can be reduced to the desired form after passing through some finite cover.
	
	\begin{Lem}\label{virtual_class_realize}
		Let $Y$ be a non-geometric closed graph manifold. Let $w$ be a rational point in $\mathcal{B}_{\mathrm{Th}^{*}}(Y)$. Then there is a finite cover $\pi\colon\tilde{Y}\rightarrow Y$ such that
		\begin{itemize}
			\item[(1)]
			Let $N_{i}\ (1\leq i\leq n)$ be the JSJ pieces of $\tilde{Y}$. Then each $N_{i}$ is a product of $\mathbb{S}^{1}$ and a positive genus surface $\Sigma_{i}$.
			\item[(2)]
			Let $\chi_{i}=\chi(\Sigma_{i})$ and $l_{i}$ be an oriented $\mathbb{S}^{1}$-fiber of $N_{i}$. There exist integers $a_{1},\ldots,a_{n}$ such that $a_{i}\equiv\chi_{i}\ \mathrm{mod}\ 2,\ |a_{i}|\leq-\chi_{i}$ and $\pi^{*}w=\sum\limits_{i=1}^{n}a_{i}\cdot PD[l_{i}]\in H^{2}(\tilde{Y};\mathbb{Q})$.
		\end{itemize}
	\end{Lem}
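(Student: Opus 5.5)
First apply Lemma \ref{first_virtual_reduction} to obtain a finite cover $Y_{1}\rightarrow Y$ in which every JSJ piece is $\Sigma_{i}'\times\mathbb{S}^{1}$ with $\Sigma_{i}'$ of positive genus. Pulling back $w$ gives a rational class $w_{1}$. Pullback multiplies the Thurston norm by the degree $d_{1}$ (Gabai), so dually $\|w_{1}\|_{\mathrm{Th}^{*}}\le\|w\|_{\mathrm{Th}^{*}}\le 1$; one can also check this directly from Lemma \ref{Thuston_norm_graph_manifold}. Since every piece of $Y_{1}$ has $\chi<0$ (positive genus with nonempty boundary, as $Y$ is non-geometric), Lemma \ref{Char_rational_point} gives
\[
w_{1}=\sum_{i} b_{i}\,PD[l_{i}'],\qquad b_{i}\in\mathbb{Q},\ |b_{i}|\le-\chi(\Sigma_{i}').
\]
Write $b_{i}=c_{i}/D$ with a common denominator $D\in\mathbb{Z}_{>0}$.

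Next pass to a further cover so that every coefficient becomes an integer of the correct parity. Take a cover $\tilde{Y}\rightarrow Y_{1}$ in which each piece $N_{j}$ covering $N_{i}'$ is $\Sigma_{j}\times\mathbb{S}^{1}$, mapping to $\Sigma_{i}'\times\mathbb{S}^{1}$ by a surface cover of degree $s_{j}$ times a fiber cover of degree $f_{j}$. Then:
\begin{itemize}
\item $\chi_{j}=s_{j}\chi(\Sigma_{i}')$;
\item the preimage of $l_{i}'$ in $N_{j}$ consists of $s_{j}$ fibers, each covering $l_{i}'$ with degree $f_{j}$;
\item consequently $\pi^{*}PD[l_{i}']$ restricted to $N_{j}$ is $s_{j}PD[l_{j}]$, so globally
\[
\pi^{*}PD[l_{i}']=\sum_{N_{j}\rightarrow N_{i}'} s_{j}\,PD[l_{j}].
\]
\end{itemize}
To make this global identity precise, I would check it by pairing both sides against classes of $H_{2}(\tilde{Y})$. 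Its effect is that the new coefficient is $a_{j}=s_{j}b_{i}$ with $|a_{j}|\le s_{j}|\chi(\Sigma_{i}')|=-\chi_{j}$, so the inequality is automatic. It therefore suffices to have every $s_{j}$ divisible by $2D$: then $a_{j}$ is an even integer and $\chi_{j}=s_{j}\chi(\Sigma_{i}')$ is even, which gives the parity condition.

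The main obstacle is producing a cover of $Y_{1}$ whose restriction to each piece is a product cover with the surface degree divisible by $2D$, compatibly across the JSJ tori. My plan is to use Lemma \ref{characteristic_cover}. Choose $m$ a multiple of $m_{0}$ and of $2D$, and take a JSJ $m$-characteristic cover $\tilde{Y}\rightarrow Y_{1}$. Each elevation of a JSJ torus is then the $m$-characteristic cover of $T$, so the fiber $l_{i}'$ lifts with degree $m$ on boundary tori.

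Two points remain at this step, for which I would follow Liu's argument.
\begin{itemize}
\item The pieces of $\tilde{Y}$ might not a priori be products. If necessary, I would pass to a further cover, again by Lemma \ref{first_virtual_reduction}, and note that divisibility of the degrees is preserved.
\item I need $s_{j}$ divisible by $2D$. Looking at a boundary torus $\tilde{T}\subset\partial N_{j}$, the degree $\deg(\tilde{T}\rightarrow T)=m^{2}$ factors as (degree of boundary curve cover)$\cdot f_{j}$, with each factor equal to $m$ by characteristicity. The surface degree $s_{j}$ is a multiple of the degree of the boundary-curve cover of $\partial\Sigma_{j}\rightarrow\partial\Sigma_{i}'$, which equals $m$.
\end{itemize}
Thus $2D\mid m\mid s_{j}$, which completes condition (2). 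Condition (1) is inherited from the construction.
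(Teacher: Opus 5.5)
Your proposal is correct and follows the paper's proof. You reduce to product pieces by Lemma \ref{first_virtual_reduction}, expand the pulled-back class as $\sum b_i\,PD[l_i']$ by Lemma \ref{Char_rational_point}, and pass to a JSJ characteristic cover. The paper uses a JSJ $2m$-characteristic cover with $m_0\mid m$ and $ma_i'\in\mathbb{Z}$, which is equivalent to your choice $m_0,2D\mid m$. Every boundary-curve cover, and hence every base-surface degree $s_j$, is then divisible by $2D$, so $a_j=s_jb_i$ and $\chi_j=s_j\chi(\Sigma_i')$ are both even with $|a_j|\le-\chi_j$.

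Your fallback of passing to a further cover is unnecessary: a finite cover of $\Sigma\times\mathbb{S}^{1}$ with $\partial\Sigma\neq\emptyset$ is automatically a product over a positive-genus surface. You are also right to check that the pulled-back class stays in the dual unit ball; the paper uses this without comment.
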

	
	\begin{proof}
		By Lemma \ref{first_virtual_reduction}, there exists a finite cover $\pi'\colon Y'\rightarrow Y$ such that each JSJ piece of $Y'$ is a product of $\mathbb{S}^{1}$ and a positive genus surface.
		Let $N_{i}'=\Sigma_{i}'\times\mathbb{S}^{1}\ (1\leq i\leq n')$ be the JSJ pieces of $Y'$. Let $\chi_{i}'=\chi(\Sigma_{i}')$ and $l_{i}'$ be an oriented regular fiber of $N_{i}'$.
		By Lemma \ref{Char_rational_point}, there exist $a_{i}'\in\mathbb{Q}, |a_{i}'|\leq-\chi_{i}$ such that
		\[
            (\pi')^{*}w'=\sum\limits_{i=1}^{n'}a_{i}'PD[l_{i}']\in H^{2}(Y';\mathbb{Q}).
        \]
		By Lemma \ref{characteristic_cover}, there is an integer $m_{0}$ such that for any integral multiple $m$ of $m_{0}$, there is a JSJ $m$-characteristic cover of $Y'$. Choose an integer $m$ such that $m_{0}|m$ and $ma_{i}'$ are integers for each $i$. Let $\tilde{\pi}\colon\tilde{Y}\rightarrow Y'$ be a JSJ $2m$-characteristic cover.
		Let $\tilde{N}_{j}=\tilde{\Sigma}_{j}\times\mathbb{S}^{1}\ (1\leq j\leq \tilde{n})$ be the JSJ pieces of $\tilde{Y}$. Let $\tilde{\chi}_{j}=\chi(\tilde{\Sigma}_{j})$ and $\tilde{l}_{j}$ be an oriented regular fiber of $\tilde{N}_{j}$. Each $\tilde{N}_{j}$ covers some JSJ piece $N_{i}'$. Then each boundary component of $\tilde{\Sigma}_{j}$ covers some boundary component of $\Sigma_{i}'$ $2m$ times. So the covering degree $d_{j}$ of $\tilde{\Sigma}_{j}\rightarrow\Sigma_{i}'$ is divided by $2m$. Let $\tilde{a}_{j}=d_{j}a_{i}'$. Then $|\tilde{a}_{j}|=d_{j}|a_{i}'|\leq-d_{j}\chi_{i}'=-\tilde{\chi}_{j}$. Besides, we have $2ma_{i}'|\tilde{a}_{j}$ and $2m|\tilde{\chi}_{j}$. So $\tilde{a}_{j}$ is an integer and $\tilde{a}_{j}\equiv\tilde{\chi}_{j}\equiv 0\ \mathrm{mod}\ 2$.
		Note that the preimage of $a_{i}'$ fibers of $N_{i}'$ in $\tilde{N}_{j}$ consists of $d_{j}a_{i}'=\tilde{a}_{j}$ fibers of $\tilde{N}_{j}$. So
        \[
            (\pi'\circ\tilde{\pi})^{*}w=\tilde{\pi}^{*}\left(\sum\limits_{i=1}^{n'}a_{i}'PD[l_{i}']\right)=\sum\limits_{j=1}^{\tilde{n}}\tilde{a}_{j}PD[\tilde{l}_{j}]\in H^{2}(\tilde{Y};\mathbb{Q}).
        \]
		Therefore, $\pi=\pi'\circ\tilde{\pi}\colon\tilde{Y}\rightarrow Y$ is the desired cover.
	\end{proof}
	
	\begin{Rem}
		By \cite[Lemma 2.9]{LeeLip08}, if $w$ is a integral class, then after passing to a further cover, we can achieve that $\pi^{*}w=\sum\limits_{i=1}^{n}a_{i}\cdot PD[l_{i}]\in H^{2}(\tilde{Y};\mathbb{Z})$.
	\end{Rem}
	
	\begin{proof}[Proof of Theorem \ref{Main_Thm}]
		If $Y$ is Seifert-fibered, then there is finite cover $\pi'\colon Y'\rightarrow Y$ such that $Y'$ is a $\mathbb{S}^{1}$-bundle over a closed surface $\Sigma$.
		Let $e(Y')$ denote the Euler number. The aspherical condition implies that $Y'$ does not support $\mathbb{S}^{3}$-geometry and $\mathbb{S}^{2}\times\mathbb{R}$-geometry, so $g(\Sigma)\geq 1$. In particular, $b_{1}(Y')\geq 1$. Set $w'=(\pi')^{*}w$.
		If $e(Y')=0$, then $Y'\cong\Sigma\times\mathbb{S}^{1}$. So $w'=a\cdot PD[\mathbb{S}^{1}]$ for some $a\in\mathbb{Q}$ with $|a|\leq -\chi(\Sigma)$ by Lemma \ref{Char_rational_point}. Choose an positive integer $m$ such that $\tilde{a}=ma\in 2\mathbb{Z}$. Choose a finite cover $\tilde{\pi}\colon\tilde{Y}=\tilde{\Sigma}\times\mathbb{S}^{1}\rightarrow Y=\Sigma\times\mathbb{S}^{1}$ whose horizontal covering degree is $m$. Then $\tilde{\pi}^{*}w'=\tilde{a}\cdot PD[\mathbb{S}^{1}]$. We have $|\tilde{a}|\leq -m\chi(\Sigma)=-\chi(\tilde{\Sigma})$ and $\tilde{a}\equiv 0\mod 2$. By Theorem \ref{graph_taut_foliation_distribution}, there exists a taut foliation $\mathcal{F}$ on $\tilde{Y}$ such that $e(\mathcal{F})=\tilde{\pi}^{*}w'$.
        If $e(Y')\not=0$, then $Y'$ has vanishing Thurston norm. So $w'\in\mathcal{B}_{\mathrm{Th}^{*}}(Y')=\{0\}$. Since $b_{1}(Y')\geq 1$, there exists a taut foliation $\mathcal{F}$ on $Y'$ \cite[Theorem 5.5]{Gab83}. Then we have $e(\mathcal{F})=0=w'$.
		
		If $Y$ supports $\mathrm{Sol}$-geometry, then there is a finite cover $\pi\colon\tilde{Y}\rightarrow Y$ such that $\tilde{Y}$ is a torus bundle over $\mathbb{S}^{1}$ of Anosov type.
		Then $H_{2}(\tilde{Y})\cong\mathbb{Z}$ is generated by the fibered class, whose Thurston norm is zero. So $\tilde{Y}$ has vanishing Thurston norm. Thus $\pi^{*}w\in\mathcal{B}_{\mathrm{Th}^{*}}(\tilde{Y})=\{0\}$.
	    Let $\mathcal{F}$ be the taut foliation on $\tilde{Y}$ foliated by torus fibers. Then we have $e(\mathcal{F})=0=\pi^{*}w$.
		
		If $Y$ is not geometric, then pick a finite cover $\pi\colon\tilde{Y}\rightarrow Y$ which satisfies the conditions in Lemma \ref{virtual_class_realize}. By Theorem \ref{graph_taut_foliation_distribution}, $\pi^{*}w$ is the Euler class of some taut foliation on $\tilde{Y}$.
	\end{proof}
	
	\begin{proof}[Proof of Theorem \ref{counterexample_real_Euler_class_zero}]
		Fix an integer $g\geq 1$. Choose a integer $d>2g+1$ and three integers $d_{1},d_{2},d_{3}$ such that $\gcd(d,d_{1})=\gcd(d,d_{2})=\gcd(d,d_{3})=1$ and $d_{1}+d_{2}+d_{3}=0$. Let $\Sigma'$ and $\Sigma''$ be two genus $g$ compact oriented surfaces with three boundary components. We have
        \[
            \chi(\Sigma')=\chi(\Sigma'')=-1-2g.
        \]
        Write $\partial\Sigma'=m_{1}'\cup m_{2}'\cup m_{3}'$ and $\partial\Sigma''=m_{1}''\cup m_{2}''\cup m_{3}''$. Equip $m_{i}',m_{i}''$ with the boundary orientations. Let
        \[
            N'=\Sigma'\times\mathbb{S}^{1},\ N''=\Sigma''\times\mathbb{S}^{1},\ T_{i}'=m_{i}'\times\mathbb{S}^{1},\ T_{i}''=m_{i}''\times\mathbb{S}^{1}.
        \]
        Let $l_{i}'$ (resp. $l_{i}''$) be an $\mathbb{S}^{1}$-fiber in $T_{i}'$ (resp. $T_{i}''$). Orient $l_{i}',l_{i}''$ such that
        \[
            PD[m_{i}']([l_{i}'])=PD[m_{i}'']([l_{i}''])=1.
        \]
        Pick an orientation-reversing homeomorphism $\psi_{i}\colon T_{i}'\rightarrow T_{i}''$ such that
        \[
            (\psi_{i})_{*}[m_{i}']=d[m_{i}'']+d_{i}[l_{i}'']\ (i=1,2,3).
        \]
        Then $\psi=\psi_{1}\sqcup\psi_{2}\sqcup\psi_{3}$ is an orientation-reversing homeomorphism $\psi\colon\partial N'\rightarrow\partial N''$. Let $Y=N'\cup_{\psi}N''$. $T_{i}'$ and $T_{i}''$ glue to a torus $T_{i}$ in $Y$.
        
        From the construction, $Y$ is a closed graph manifold with three JSJ tori $T_{1},T_{2},T_{3}$ and two JSJ pieces $N',N''$.
		Let
        \[
            i\colon N'\cap N''\rightarrow Y,\ i'\colon N'\cap N''\rightarrow N',\ i''\colon N'\cap N''\rightarrow N''
        \]
        denote the inclusions.
		The Mayer-Vietoris sequence for $(Y,N',N'')$ gives an exact sequence
		\[\cdots\rightarrow H_{2}(Y)\xrightarrow{\phi} H_{1}(N'\cap N'')\xrightarrow{(i'_{*},i''_{*})} H_{1}(N')\oplus H_{1}(N'')\rightarrow H_{1}(Y)\rightarrow\cdots.\]
		In particular, we have
		\[b_{1}(Y)\geq b_{1}(N')+b_{1}(N'')-b_{1}(N'\cap N'')=(2g+3)+(2g+3)-6=4g.\]
		Set $\alpha=[m_{1}']+[m_{2}']+[m_{3}']\in H_{1}(N'
		\cap N'')$.
		Then
		\begin{align*}
			i'_{*}(\alpha)&=[m_{1}']+[m_{2}']+[m_{3}']=0\in H_{1}(N'),\\
			i''_{*}(\alpha)&=(d[m_{1}'']+d_{1}[l_{1}''])+(d[m_{2}'']+d_{2}[l_{2}''])+(d[m_{3}'']+d_{3}[l_{3}''])\\
			&=d([m_{1}'']+[m_{2}'']+[m_{3}''])+(d_{1}+d_{2}+d_{3})[l_{1}'']\\
			&=d([m_{1}'']+[m_{2}'']+[m_{3}''])=0\in H_{1}(N'').
		\end{align*}
		Thus, there exists $\beta\in H_{2}(Y)$ such that $\phi(\beta)=\alpha$.
		Note that $\phi$ is the composition
        \[
            H_{2}(Y)\xrightarrow{PD_{Y}}H^{1}(Y)\xrightarrow{i^{*}}H^{1}(N'\cap N'')\xrightarrow{PD_{N'\cap N''}}H_{1}(N'\cap N'').
        \]
		So for any $u\in H_{1}(N'\cap N'')$, we have
		\begin{align*}
            PD_{Y}(i_{*}u)(\beta)&=PD_{Y}(\beta)(i_{*}u)=i^{*}(PD_{Y}(\beta))(u)=PD_{N'\cap N''}(\phi(\beta))(u)\\&=PD_{N'\cap N''}(\alpha)(u).
        \end{align*}
		In particular, we have
        \begin{align*}
            PD_{Y}[l_{i}'](\beta)&=PD_{N'\cap N''}(\alpha)(l_{1}')=PD_{T_{1}'}([m_{1}'],[l_{1}'])=1,\\
            PD_{Y}[l_{1}''](\beta)&=PD_{N'\cap N''}(\alpha)([l_{1}''])=PD_{T_{1}''}((\psi_{1})_{*}[m_{1}'],[l_{1}''])=d.
        \end{align*}
        For any taut foliation $\mathcal{F}$ on $Y$, by Theorem \ref{graph_taut_foliation_distribution}, there exists integers $a',a''\in\{\pm1,\pm3,\ldots,\pm(2g+1)\}$ such that
        \[
            e(\mathcal{F})=a'PD[l_{1}']+a''PD[l_{1}''].
        \]
        Then
        $e(\mathcal{F})(\beta)=a'+da''\not=0$ since $d>2g+1$. So $e(\mathcal{F})\not=0\in H^{2}(Y,\mathbb{R})$.
	\end{proof}
	
	\begin{proof}[Proof of Theorem \ref{counterexample_integral_Euler_class_zero}]
		Fix an integer $g\geq 1$. Let $\Sigma'$ and $\Sigma''$ be two genus $g$ compact oriented surfaces with a single boundary component. We have
        \[
            \chi(\Sigma')=\chi(\Sigma'')=1-2g.
        \]
        Let $m'=\partial\Sigma'$ and $m''=\partial\Sigma''$. Equip $m',m''$ with the boundary orientations. Let
        \[
            N'=\Sigma'\times\mathbb{S}^{1},\ N''=\Sigma''\times\mathbb{S}^{1}.
        \]
        Let $l'$ (resp. $l''$) be an $\mathbb{S}^{1}$-fiber in $\partial N'$ (resp. $\partial N''$). Orient $l',l''$ such that
        \[
            PD[m'][l']=PD[m''][l'']=1.
        \]
        Pick a orientation-reversing homeomorphism $\psi\colon\partial N'\rightarrow\partial N''$ such that
        \[
            \psi_{*}[m']=2g[m'']+(4g^{2}+1)[l''],\ \psi_{*}[l']=[m'']+2g[l''].
        \]
        Let $Y=N'\cup_{\psi}N''$. $\partial N'$ and $\partial N''$ glue to a torus $T$ in $Y$. $Y$ is a closed manifold with a single JSJ torus $T$ and two JSJ pieces $N',N''$. Let
        \[
            i\colon T\rightarrow Y,\ i'\colon T\rightarrow N',\ i''\colon T\rightarrow N''
        \]
        denote the inclusions. We have $\mathrm{Ker}\ i'_{*}=\mathbb{Z}\cdot[m_{1}']$ and
		$\mathrm{Ker}\ i''_{*}=\mathbb{Z}\cdot[m_{1}'']$. The (reduced) Mayer-Vietoris sequence for $(Y,N',N'')$ gives an exact sequence
		\[
            \cdots\rightarrow H_{1}(T)\xrightarrow{(i'_{*},-i''_{*})} H_{1}(N')\oplus H_{1}(N'')\rightarrow H_{1}(Y)\rightarrow 0.
        \]
        It follows that
        \[
            b_{1}(Y)=b_{1}(N')+b_{1}(N'')-b_{1}(T)=(2g+1)+(2g+1)-2=4g.
        \]
		and
		\[\mathrm{Ker}\ i_{*}=\mathrm{Ker}\ i'_{*}+\mathrm{Ker}\ i''_{*}=\mathbb{Z}\cdot[m']\oplus\mathbb{Z}\cdot[m'']. \]
		Consider the homomorphism $\Phi\colon H_{1}(T)\rightarrow\mathbb{Z}/(4g^{2}+1)\mathbb{Z}$ given by 
        \[
          \Phi(a[m'']+b[l''])=[b].
        \]
		Then we have
        \[
            \Phi([m'])=[0],\ \Phi([l'])=[2g],\ \Phi([m''])=[0],\ \Phi([l''])=[1].
        \]
		So $\Phi$ induces an isomorphism
        \[\Phi\colon\mathrm{Im}\ i_{*}\cong H_{1}(T)/\mathrm{Ker}\ i_{*}\xrightarrow{\cong}\mathbb{Z}/(4g^{2}+1)\mathbb{Z}.\]
		For any taut foliation $\mathcal{F}$ on $Y$, by Theorem \ref{graph_taut_foliation_distribution}, there exists integers $a',a''\in\{\pm1,\pm3,\ldots,\pm(2g-1)\}$ such that
        \[
            e(\mathcal{F})=a'PD[l']+a''PD[l''].
        \]
        So $PD(e(\mathcal{F}))$ lies in $ \mathrm{Im}\ i_{*}$, which is torsion. Thus, $e(\mathcal{F})=0\in H^{2}(Y;\mathbb{R})$. On the other hand, $\Phi(PD(e(\mathcal{F})))=[a'+2ga'']\not=[0]$. So $e(\mathcal{F})\not=0\in H^{2}(Y;\mathbb{Z})$.
	\end{proof}

	\bibliographystyle{alpha}
	\bibliography{reference}
	
\end{document}